\documentclass[usletter]{amsart} 

\usepackage{amsmath,amsthm,amssymb,amsfonts,mathrsfs,color,hyperref, mathtools, graphicx, enumitem, todonotes}
\usepackage[letterpaper, left= 3cm, right=3cm, top= 3cm, bottom = 3cm]{geometry}
\usepackage{apptools}
\AtAppendix{\counterwithin{theorem}{section}}

\theoremstyle{plain}
\begingroup
\newtheorem{theorem}{Theorem}[]
\newtheorem*{theorem*}{Theorem}
\newtheorem*{"theorem"}{``Theorem''}

\newtheorem{lemma}[theorem]{Lemma}
\endgroup

\theoremstyle{definition}
\begingroup

\endgroup

\theoremstyle{remark}
\begingroup
\newtheorem{remark}[theorem]{Remark}

\endgroup 

\numberwithin{equation}{section}
\newenvironment{pde}{\left\{\begin{array}{rll} } {\end{array}\right.}

\newcommand{\N}{\mathbb N}

\newcommand{\R}{\mathbb R} 
\newcommand{\E}{{\mathbb E}}

\newcommand{\dist}{{\rm dist}}
\newcommand{\diam}{{\rm diam}}

\renewcommand{\H}{{\mathcal H}}

\newcommand{\Ra} {\Rightarrow}
\newcommand{\wto}{\rightharpoonup}
\newcommand{\embeds}{\xhookrightarrow{\quad}}

\renewcommand{\d}{\mathrm{d}}

\newcommand{\dx}{\,\mathrm{d}x}

\newcommand{\dz}{\,\mathrm{d}z}
\newcommand{\ds}{\,\mathrm{d}s}
\newcommand{\dt}{\,\mathrm{d}t}
\newcommand{\dr}{\,\mathrm{d}r}

\newcommand{\eps}{\varepsilon}
\newcommand{\average}{{\mathchoice {\kern1ex\vcenter{\hrule height.4pt
width 6pt depth0pt} \kern-9.7pt} {\kern1ex\vcenter{\hrule
height.4pt width 4.3pt depth0pt} \kern-7pt} {} {} }}

\allowdisplaybreaks

 \makeatletter
\@namedef{subjclassname@2020}{2020 Mathematics Subject Classification}
\makeatother
 \newcommand{\showlabel}{\addtocounter{equation}{1}\tag{\theequation}}

\newcommand{\B}{\mathcal{B}}
\usepackage{afterpage}

\begin{document}

\title[Elliptic Regularity in Barron Spaces]{Elliptic Regularity Theory in Barron Spaces and Applications to the Deep Ritz Method}

\author{Stephan Wojtowytsch}
\address{Stephan Wojtowytsch\\
University of Pittsburgh\\
Department of Mathematics\\
Thackeray Hall\\
Pittsburgh, PA 15213
}
\email{s.woj@pitt.edu}

\date{\today}

\subjclass[2020]{49J10, 35A15}
\keywords{Artificial neural network, Barron space, deep Ritz method, physics-informed neural network, perceptron, harmonic function, Poisson equation}

\begin{abstract}
We prove that harmonic functions with Dirichlet boundary data in Barron space, a function class tailored to wide ReLU networks with a single hidden layer and suitably bounded weights, are generally neither Lipschitz continuous nor in the Sobolev class $H^2$. A fortiori, they are not in any function class in which the norm controls the Lipschitz constant, which rules out not only Barron space regularity, but also regularity in function classes for deeper ReLU networks with bounded coefficients. They can, however, be approximated to accuracy $\sim \varepsilon$ by Barron functions of low norm $\sim |\log\varepsilon|$ in various Lebesgue and Sobolev norms (with at most two derivatives). The positive result holds on very simple domains: Half-spaces in arbitrary dimension and rectangular domains in two dimensions. As an application of this regularity theory, we obtain a priori error estimates for Deep Ritz neural PDE solvers.
\end{abstract}

\maketitle

%\setcounter{tocdepth}{1}
%\tableofcontents

\section{Introduction}

In recent years, neural networks have gained popularity in scientific computing. Among others, these applications encompass computational tools where the neural network approximates the solution of a partial differential equation (PDE) such as the Deep Ritz method \cite{weinan2018deep, dondl2022uniform} and physics-informed neural networks \cite{raissi2019physics}, as well as those where it approximates the solution operator of a parametric PDE between function classes \cite{li2020fourier, kovachki2021neural, kovachki2021universal, wen2022u, helwig2023group}. Also in the second case, the solution in space (or space-time) may be represented by a neural network -- this is the case for instance for Deep Operator Networks \cite{lu2019deeponet, liu2024neural, weihs2025deep}.

In this work, we investigate the capacity of neural networks to represent and/or approximate solutions of PDEs if the problem data are given or well-approximated by neural networks in the arguably simplest instance: the Poisson equation with Dirichlet boundary values
\begin{equation}
\begin{pde} \label{eq poisson equation intro}
-\Delta u &= f&\text{in }\Omega\\ u &= g &\text{on }\partial\Omega.
\end{pde}
\end{equation}
Whether the solution of a problem cannot be represented exactly or approximated efficiently has crucial impact on the training process -- see e.g.\ \cite{dynamic_cod} and \cite[Corollary 5.6]{deep_barron}. While even exact representation may not be sufficient for successful training \cite{livni2014computational}, efficient approximability is generally considered necessary.
  
This article addresses the question in the setting of {\em Barron spaces} $\mathcal B$, function spaces designed to capture the functions which can be represented by neural networks with a single hidden layer (perceptra) and ReLU activation. These function spaces contain all functions represented by finite ReLU perceptra $\frac1m \sum_{i=1}^ma_i\,\sigma(w_i\cdot x+b_i)$ as well as their infinite width limits $\E_{(a,w,b)\sim\pi}[ a\,\sigma(w\cdot x+b)]$, assuming that the average magnitude of their weights $\E_{(a,w,b)}\big[|a|^2 + |w|^2 + b^2\big]$ remains bounded. Dynamically, this `norm bound' would correspond to a weight decay regularizer. In the infinite width limit, the empirical average (sum) is replaced with a true expectation (integral) over a probability distribution on the space of parameters. A more comprehensive introduction is given in the next section.

%Efficient approximability may have a positive impact on the solvability of the optimization problem, although it is far from a guarantee. 
Philosophically, regularity theory in Barron spaces addresses the question: If the problem data can be approximated well by shallow ReLU networks, can the solution be as well? This is particularly natural for Dirichlet boundary data, which are usually learned from observations by {\em the same} neural network that represents the solution to the PDE in the interior. We consider the case $f= \Delta U$ for $U\in \B$ to ensure that both the PDE and the boundary condition can be satisfied individually in Barron space. The question remains: If both can be satisfied {\em individually} in Barron space, is it possible to satisfy them {\em simultaneously}?

The answer is negative, but in a quantitatively mild way.
Our main regularity result can be summarized as follows.

\begin{theorem}[Regularity theory with Barron data] \label{theorem main}
\begin{enumerate}
\item Let $g, U\in\B$ and $\Omega\subseteq\R^d$ a bounded open set. Then the problem
\begin{equation}\label{eq poisson problem}
\begin{pde}
\Delta u &= \Delta U &\text{in }\Omega\\
u &= g &\text{on }\partial\Omega
\end{pde}
\end{equation}
has a unique weak solution $u^*\in g + H_0^1(\Omega) \subset H^1(\Omega)$.
\item Let $\Omega = (0,a)\times (0,b)$ be a rectangular domain in $\R^2$ and $g, U, u^*$ as above. On the positive side we note:
\begin{itemize}
\item $u^*\in L^\infty(\Omega)$ and $\|u^*\|_{L^\infty(\Omega)} \leq \big(\|g\|_\B+2\,\|U\|_\B\big)(1+ \sqrt{a^2+b^2})$.
\item $u^*\in W^{1,q}(\Omega)$ and $\nabla u^* \in L^q(\partial\Omega)$ for all $1\leq q<\infty$. More quantitatively:
\[
\|\nabla u^*\|_{L^q(\Omega)} \lesssim \big( \|U\|_\B+ \|g\|_\B\big)q  \qquad\text{and}\quad \|\nabla u^*\|_{L^q(\partial\Omega)}\lesssim \big( \|U\|_\B+ \|g\|_\B\big) q  .
\]
\item  The `harmonic part' $u^*-U$ of the solution satisfies $(u^*-U) \in W^{2,p}(\Omega)$ for all $1\leq p<2$ and $\|D^2u^*\|_{L^p(\Omega)}\lesssim \|g\|_\B\,(2-p)^{-1/p}$.
\end{itemize}
On the negative side:
\begin{itemize}
\item Generally, $u^*\not\in \B$. This is true even if $\Delta u = \Delta U \equiv 0$.
\item The conditions $p<2$ and $q<\infty$ are necessary: In general, $u^*$ is neither in $W^{1,\infty}(\Omega)$ nor in $H^2(\Omega)$. This is true even if $\Delta u = \Delta U\equiv 0$. 
\end{itemize}
The constants hidden in the notation $\lesssim$ may depend on $a,b$, but not on $U, g, p, q$.
\item Under the same assumptions, for every $\eps\in(0,1/2)$, there exists $u_\eps\in \B$ such that $u_\eps \equiv g$ on $\partial\Omega$ and
\begin{align*}
\|u_\eps\|_\B &\lesssim \big( \|U\|_\B+ \|g\|_\B\big)\,|\log \eps|\\
  \|u_\eps - u^*\|_{L^\infty(\Omega)} &\lesssim \big( \|U\|_\B+ \|g\|_\B\big) \,\eps\\
    \|\nabla(u_\eps - u^*)\|_{L^1(\Omega)} &\lesssim \big( \|U\|_\B+ \|g\|_\B\big)\,\eps^{2}|\log\eps| \\
  \|\nabla(u_\eps - u^*)\|_{L^q(\Omega)} &\lesssim \big( \|U\|_\B+ \|g\|_\B\big)\left(\frac1{q-1}+q\right)\,\eps^{2/q} &&\forall\ q\in (1,\infty)\\
    \|\nabla (u_\eps -u^*)\|_{L^q(\partial\Omega)} &\leq \big( \|U\|_\B+ \|g\|_\B\big)\,q\,\eps^{1/q} &&\forall\ q\in [1,\infty)\\
\|D^2(u_\eps - u^*)\|_{L^{p}(\Omega)} &\lesssim \big( \|U\|_\B + \|g\|_\B\big)\,(2-p)^{-1/p} \,\eps^{\frac{2-p}p} && \forall\ p\in [1,2).
\end{align*}
The constants hidden in the $\lesssim$ notation may depend on $a, b$ but not on $U,g, p, q$ or $\eps$.

\item Let $\Omega = (0,a)\times(0,b)$ be a rectangular domain and $U, g\in\B$. Then for every $\eps\in (0,1/2)$, there exists $\tilde u_\eps \in \B\cap \big(U+ H^2(\Omega)\big)$ such that $\Delta u_\eps \equiv \Delta U$ and 
\begin{align*}
\|\tilde u_\eps\|_\B &\lesssim \big( \|U\|_\B+ \|g\|_\B\big)\,|\log \eps|\\
\|\tilde u_\eps - u^*\|_{L^\infty(\Omega)} &\lesssim \big( \|U\|_\B+ \|g\|_\B\big) \,\eps|\log\eps|\\
\|\tilde u_\eps - u^*\|_{L^q(\Omega)} &\lesssim \big( \|U\|_\B+ \|g\|_\B\big)\,q \eps &&\forall q\in[1,\infty)\\
\|\tilde u_\eps - u^*\|_{L^q(\partial \Omega)} &\lesssim \big( \|U\|_\B+ \|g\|_\B\big)\,q \eps &&\forall q\in[1,\infty)\\
\|\nabla(\tilde u_\eps - u^*)\|_{L^q(\Omega)} &\lesssim \big( \|U\|_\B+ \|g\|_\B\big) \frac1{2-q}\,\eps &&\forall\ q\in [1,2)\\
\|\nabla(\tilde u_\eps - u^*)\|_{L^2(\Omega)} &\lesssim \big( \|U\|_\B+ \|g\|_\B\big) \eps\sqrt{|\log\eps|} &&\\
\|\nabla(\tilde u_\eps - u^*)\|_{L^q(\Omega)} &\lesssim \big( \|U\|_\B+ \|g\|_\B\big) \left(q+ \frac1{q-2}\right)\,\eps^{2/q} &&\forall\ q\in (2,\infty)\\
 \|\nabla (\tilde u_\eps -u^*)\|_{L^1(\partial\Omega)} &\lesssim \big( \|U\|_\B+ \|g\|_\B\big)\,\eps|\log\eps|\\
 \|\nabla (\tilde u_\eps -u^*)\|_{L^q(\partial\Omega)} &\lesssim \big( \|U\|_\B+ \|g\|_\B\big)\,\left(q+ \frac1{q-1}\right)\,\eps^{1/q} &&\forall\ q\in (1,\infty)\\
\|D^2(\tilde u_\eps - u^*)\|_{L^{p}(\Omega)} &\lesssim \big( \|U\|_\B + \|g\|_\B\big)\,(2-p)^{-1/p} \,\eps^{\frac{2-p}p} && \forall\ p\in [1,2).\\
\|D^2(\tilde u_\eps-u^*)\|_{L^2(\Omega)} &\lesssim \big( \|U\|_\B+ \|g\|_\B\big)\,\sqrt{|\log \eps|}
\end{align*}
Note that the final estimate {\em grows} as $\eps\to 0^+$.
\end{enumerate}
\end{theorem}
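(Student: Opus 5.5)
The proof reduces everything to the homogeneous problem and then to a single neuron, using linearity and the integral representation of Barron functions. Setting $v := u^* - U$, part (1) becomes the statement that $\Delta v = 0$ in $\Omega$ with $v = g - U$ on $\partial\Omega$. Since $g - U\in\B\subset W^{1,\infty}$ is Lipschitz on $\overline\Omega$, it has an $H^1$ extension, and Lax--Milgram (or Dirichlet's principle) gives a unique weak solution. For parts (2)--(4) we write $g - U = \E_{(a,w,b)\sim\pi}[a\,\sigma(w\cdot x+b)]$ with $\E[|a|(|w|+|b|)]\lesssim \|g\|_\B+\|U\|_\B$. We then define $v = \E_\pi[a\,h_{w,b}]$, where $h_{w,b}$ is the harmonic extension of the single neuron $\sigma(w\cdot x+b)|_{\partial\Omega}$. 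By Minkowski's integral inequality, every norm estimate in (2)--(4) follows from the corresponding estimate for $h_{w,b}$, uniform in $(w,b)$ after normalizing to $|w|+|b|=1$. All the work is therefore in understanding one neuron.

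\emph{Single-neuron analysis on a rectangle.} On each side of $\Omega=(0,a)\times(0,b)$, the trace of $\sigma(w\cdot x+b)$ is piecewise linear with at most one kink, and there are further non-smooth points at the four corners. I would first subtract a global affine-on-each-side correction that is harmonic, so that only the kinks remain. By odd or even reflection across the sides, corners reduce to half-plane problems with at most a few kinks (the rectangle's right angles make reflection compatible). In the half-plane $\{x_2>0\}$ with data $|x_1|$ near a kink, the harmonic extension is explicit:
\[
\phi(x)=\tfrac{2}{\pi}\Big(x_1\arctan\tfrac{x_1}{x_2}+x_2\log|x|\Big)
\]
(up to affine terms), i.e.\ essentially $\mathrm{Im}(z\log z)$. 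This yields $|\nabla\phi|\sim|\log|x||$ and $|D^2\phi|\sim|x|^{-1}$. Consequently the gradient lies in $L^q$ with norm $\lesssim q$, both in $\Omega$ and on the boundary, since $\|\log r\|_{L^q}\sim q$ from the Gamma function. Likewise $D^2 h\in L^p$ with $\int_0^1 r^{1-p}\,dr = (2-p)^{-1}$, giving the stated $(2-p)^{-1/p}$. The $L^\infty$ bound follows from the maximum principle applied to $v$ and to $U$, $g$, using $\|f\|_{L^\infty(\Omega)}\le\|f\|_\B(1+\diam\,\Omega)$.

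\emph{Negative results.} Taking $U=0$ and $g=\sigma(x_1-a/2)$, the harmonic extension contains $\phi$ with nonzero coefficient plus a smooth remainder. Hence $\nabla u^*\notin L^\infty$ and $D^2u^*\notin L^2$, because $\int r^{-2}\,r\,dr$ diverges. Since Barron functions are Lipschitz, this also gives $u^*\notin\B$.

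\emph{Approximation, parts (3) and (4).} For one kink, I would build a Barron function $\phi_\eps$ agreeing with $\phi$ outside $B_\eps$ up to controlled error. The plan is to write $\log|x|$-type behaviour as a superposition over dyadic scales $2^{-k}$, $\eps\le 2^{-k}\le 1$, of ReLU ridge profiles in rotated directions. Each scale costs $O(1)$ in Barron norm, so the total cost is $|\log\eps|$. Truncating at scale $\eps$ leaves an error supported essentially in $B_\eps$, where the gradient error is $\lesssim|\log r|$ and the Hessian error is $\lesssim 1/r$. Integrating these bounds over $B_\eps$ (area $\eps^2$) or over $\partial\Omega\cap B_\eps$ (length $\eps$) gives the stated rates $\eps^{2/q}$, $\eps^{1/q}$, and $\eps^{(2-p)/p}$.

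For part (3) we correct the truncation so that the trace matches $g$ exactly; this loses harmonicity but keeps the Barron norm $\lesssim|\log\eps|$. For part (4) we instead keep $\Delta\tilde u_\eps=\Delta U$ exactly, which requires $\tilde u_\eps - U$ harmonic. The natural route is to use harmonic building blocks: for instance, replace the kink by the harmonic extension of data mollified at scale $\eps$. This is smooth with $\|D^2\|_{L^2}^2\sim|\log\eps|$, and the trace error is $O(\eps)$, which produces the slightly different exponents of part (4).

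The main obstacle is part (4): producing an exactly harmonic Barron approximant with norm $\lesssim|\log\eps|$. I would first try to express $\mathrm{Im}(z\log z)$, truncated at $|z|\sim\eps$, as an integral over directions and offsets of ridge functions whose Laplacian cancels. If that fails, I would add a Barron correction solving $\Delta=0$ with small smooth boundary data, controlled by the $H^2$ estimates in the rectangle.
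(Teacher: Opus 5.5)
\paragraph{Main gap: the Barron bound in part (4).} Your single-neuron reduction is fine. So are the maximum-principle $L^\infty$ bound and the negative results via reflection at a kink. Your Bochner/Minkowski superposition replaces the paper's finite-width approximation with weak limits. One correction: the kink profile with trace $|x_1|$ is $\frac2\pi\big(x_1\arctan(x_1/x_2)-x_2\log|x|\big)$; with your $+$ sign it is not harmonic.

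The genuine gap is part (4), which you flag as the main obstacle and leave open. Your first suggestion restates the goal rather than giving a construction. The fallback does not help: adding a harmonic Barron correction cannot make your non-harmonic part (3) approximant harmonic, nor make a non-Barron harmonic function Barron.

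What is missing is a bound on the Barron norm of an exactly harmonic function whose singularity lies at distance $\eps$ outside $\overline\Omega$. The paper takes $\tilde u_\eps(x)=u^*(x_1,x_2+\eps)$, which is precisely your Poisson-mollified kink and is harmonic on $\{x_2>-\eps\}$. It then uses positive homogeneity with $y_2=x_2+\eps\geq\eps$:
\begin{itemize}
\item $x_1\arctan(x_1/y_2)=y_2\,\Phi(x_1/y_2)$, where $\Phi(z)=z\arctan z$ is Barron on $\R$;
\item $y_2\log(x_1^2+y_2^2)=2y_2\log y_2+y_2\log\big(1+(x_1/y_2)^2\big)$;
\item the first term is a function of one variable with $\int_\eps^{1+\eps}|g_1''|\sqrt{1+t^2}\,\d t\lesssim|\log\eps|$;
\item the second is the lift $y_2\,g_2(x_1/y_2)=\E[a\,\sigma(wx_1+by_2)]$ of $g_2(z)=\log(1+z^2)$ on $|z|\leq 1/\eps$, whose weighted variation $\int|g_2''|\sqrt{1+z^2}\,\d z$ is $\sim|\log\eps|$.
\end{itemize}
Alternatively, you were one step from closing it yourself. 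You already have $\|D^2\tilde u_\eps\|_{L^2}\sim|\log\eps|^{1/2}$, and one also has $\|D^3\tilde u_\eps\|_{L^2}\sim\eps^{-1}$. The interpolation bound $\|f\|_\B\lesssim s^{-1/2}\|f\|_{H^2}^{1-s}\|f\|_{H^3}^{s}$ with $s=1/|\log\eps|$, which the paper uses for $v_\eps$, then gives $|\log\eps|$.

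\paragraph{Further gaps.} Two further steps are missing or too crude.

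First, on the rectangle $u_\eps$ and $\tilde u_\eps$ must be Barron. So the non-explicit part of the single-neuron solution, after removing the kink profiles and their mirror images, must be Barron with norm uniform in normalized $(w,b)$. Calling it a ``smooth remainder'' does not give this, because local cut-offs destroy the Barron property. You need regularity up to the corners in $H^{2+s}$ for some $s>0$, plus the embedding of $H^{2+s}$ into $\B$ on bounded sets. The paper gets these from Lemmas \ref{lemma extension from boundary}, \ref{lemma regularity cube} and \ref{lemma sobolev embedding}.

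Second, in part (3) the pointwise bound $|\nabla(u_\eps-u^*)|\lesssim|\log r|$ on $B_\eps$ loses a logarithm. Indeed $\int_0^\eps|\log r|^q\,r\,\d r\geq\frac12\eps^2|\log\eps|^q$, and similarly on $\partial\Omega$. This gives $\eps^{2/q}|\log\eps|$ and $\eps^{1/q}|\log\eps|$ instead of the stated rates. You need the sharper bound $\lesssim 1+\log(\eps/r)$. It holds for a truncation of the form $x_2\log\max(|x|,\eps)$, suitably smoothed, such as the paper's $\frac{x_2}{2\pi}\log(|x|^2+\eps^2)$. The factor $x_2$ also makes the trace exact with no further correction.

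Your dyadic idea works in exactly this form. One has $x_2\big[\log\max(|x|,2^{-k-1})-\log\max(|x|,2^{-k})\big]=2^{-k}G(2^kx)$ for a single fixed, compactly supported profile $G$, provided the maximum is smoothed consistently across scales. By positive homogeneity $\|2^{-k}G(2^k\cdot)\|_\B\leq\|G\|_\B$, so the telescoping sum over $2^{-k}\geq\eps$ costs $\lesssim|\log\eps|$. This is a multiscale alternative to the paper's Sobolev-interpolation bound for $v_\eps$.
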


The first point and the first observations in the second point are positive: If the right hand side and boundary condition to the PDE satisfy reasonable regularity conditions, a unique solution $u^*$ to the Poisson equation exists, and it satisfies certain regularity conditions. On the other hand, the final observations of the second point are negative: Neither are the second derivatives of $u^*$ -- which comprise the Laplacian -- square integrable, nor is the solution a Barron function (or even Lipschitz continuous). This is even true for harmonic functions with Barron space boundary values.

The third point is more positive: At least on a very simple domain, we can find a function of low Barron norm which attains the boundary condition exactly and `almost' satisfies the differential equation in the sense that $\Delta u_\eps$ is small in $L^q$ for $q<2$ (since $u_\eps$ and $u^*$ are $L^q$-close). The fourth point is similar, but considers the situation where the partial differential equation is satisfied exactly and the boundary values are attained only approximately.
Note that $u^* = (u^*-U) + U$ can be approximated by Barron functions in $W^{2,p}(\Omega)$ for $p<2$ even if $u^*\notin W^{2,p}$ since only the harmonic part $u^*-U$ has to be approximated. Similarly, if $\Delta U\in L^2(\Omega)$, elliptic regularity shows that $U\in H^2(\Omega)$ and the statement in the fourth point becomes
\[
\|\tilde u_\eps\|_{H^2(\Omega)} \lesssim \big(\|U\|_\B + \|g\|_\B + \|\Delta U\|_{L^2(\Omega)}\big) \sqrt{|\log\eps|}.
\]
In the greater context of scientific machine learning, the message of Theorem \ref{theorem main} is that even for very simple linear PDEs which behave well on periodic or unbounded domains, the presence of boundaries may lead to unexpectedly subtle behavior.

While we carefully consider the dependence on function classes $W^{1,q}$ and $W^{2,p}$ and on the small parameter $\eps>0$, we leave the dependence of constants on the aspect ratio $b/a$ of the domain $\Omega$ for future work. We also do not claim optimality in the construction of $u_\eps$ or in the rates of convergence and do not provide lower bounds on approximability.

Theorem \ref{theorem main} structurally resembles regularity theory in Lipschitz spaces, where the principle of maximal regularity is known to fail (i.e.\ solutions may be less regular than the data would in principle allow for). For instance, the `local Lipschitz constant' of a harmonic function $h$ with boundary values in $C^{0,1}(\partial\Omega)$ blows up logarithmically at the boundary if $\partial\Omega$ is $C^2$-regular \cite{hile1999gradient}: {\em If $\Delta h \equiv 0$ in $\Omega$ and $h|_{\partial\Omega}$ is Lipschitz-continuous, then}
\[
|\nabla h(x)|\leq C(\Omega)\,[h]_{C^{0,1}(\partial\Omega)} \,\log\left(\frac{2\,\diam(\Omega)}{\dist(x,\partial\Omega)}\right)\qquad\forall\ x\in\Omega.
\]
This exact blow-up behavior is observed for the simplest possible Barron boundary values $\sigma(w\cdot x+b)$ as seen below. The work of \cite{hile1999gradient} can be used to establish that $u^*, h\in W^{1,p}(\Omega)$ for all $p<\infty$ with linear norm growth (in $p$) for $C^2$-domains $\Omega$ and harmonic functions with Lipschitz boundary values (in particular, Barron boundary values). We pursue a more explicit construction since rectangular domains do not have smooth boundaries, and since it is crucial for the approximation theory by Barron functions, in particular approximation of gradients on the boundary.

As an application of Theorem \ref{theorem main}, we derive a priori error estimates for Deep Ritz neural PDE solvers when finding harmonic functions on a two-dimensional rectangular domain with Barron boundary values.
The Deep Ritz method builds on the variational characterization of the solution of \eqref{eq poisson equation intro} as the minimizer of the functional
\[
E_{DR} (u) =  \begin{cases} \int_\Omega \frac12\,\|\nabla u\|^2 - fu\dx &\text{if $u=g$ on $\partial\Omega$}\\ +\infty&\text{else,}\end{cases}
\]
where the PDE arises as the Euler-Lagrange equation (first order optimality condition) for $E_{DR}$.
For Deep Ritz solvers, the functional is discretized by replacing integrals with finite sums over evaluation points, boundary values enforced weakly by penalty, and general functions $u$ by neural networks $u(\theta,\cdot)$ of fixed architecture:
\[
\widehat E_{DR, N, n, \lambda}(\theta) = \frac{1}{N}\sum_{i=1}^N \left(\frac12\,\|\nabla_xu(\theta;x_i)\|^2 - f(x_i) \,u(\theta;x_i)\right) + \lambda\,\frac{1}{n} \sum_{j=1}^n \big|u(\theta; z_j) - g(z_j)\big|^2
\]
where $\theta$ comprises the weights and biases of the neural network and $x_i, z_j$ are integration points inside the domain and on the boundary respectively. We consider the functional in a class of shallow ReLU networks
\[
u(\theta; x) = u(a,W,b; x) =  \sum_{i=1}^m a_i \,\sigma(w_i\cdot x+b_i), \qquad (a,W,b) \in \R^m \times \R^{m\times 2} \times \R^m
\]
and add a weight decay regularizer
\[
\widehat E_{DR, N, n, \lambda, \mu}(a,W,b) = \widehat E_{DR, N, n, \lambda}(a,W,b) + \frac\mu2\big(\|a\|_2^2+ \|W\|_F^2 + \|b\|_2^2\big)
\]
to control the generalization error. Then the following is true.

\begin{theorem}\label{theorem deep ritz}
Let $\Omega\subseteq \R^2$ be a rectangular domain, $f\equiv 0$ and $g\in\B$. Assume that the integration points $x_i, z_j$ are sampled independently and uniformly in $\Omega$ and on $\partial\Omega$ respectively.
Assume further that $\delta\in(0, 1/4)$ and that the parameters $N, n, \lambda, \mu$ scale with the expressivity $m$ of the function class such that
\[
\lambda_m = \frac{\sqrt{m}}{\log m}, \qquad \mu_m = \frac{\gamma}{\sqrt m}, \qquad n, N\gg m^2
\]
with $\gamma \geq \frac{\|g\|_\B+1}{\sqrt m}$.
Then, with probability at least $1-2\delta$ and for sufficiently large $m\in\mathbb N$, if $(a,W,b)$ minimizes $\widehat E_{DR; N, n, \lambda,\mu}$, we have
\[
\big\|u(a,W,b;\cdot) - u^*\big\|_{H^1(\Omega)}^2 \lesssim \big(\|g\|_\B^2+1\big) \,\sqrt{\log(1/\delta)}\,\frac{\log m}{\sqrt m}
\]
where the hidden constant depends only on the rectangle $\Omega$ and encompasses errors from the discretization of the integral.
\end{theorem}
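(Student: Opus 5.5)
The plan is a standard a priori analysis for regularized empirical risk minimization, with the comparison function supplied by Theorem \ref{theorem main}, point 3. I will split the $H^1$ error of the minimizer $\hat u = u(a,W,b;\cdot)$ into an approximation error, a penalty-consistency error and a generalization (quadrature) error.

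\textbf{Step 1: a deterministic energy estimate.} For any $u\in H^1(\Omega)$ and $f\equiv 0$, I will use the continuous penalized functional
\[
E_\lambda(u) = \frac12\int_\Omega |\nabla u|^2\dx + \lambda\int_{\partial\Omega}|u-g|^2\,\d\H^1 .
\]
I will show that $E_\lambda(u) - E_\lambda(u^*) + C\lambda\|u-g\|^2_{L^2(\partial\Omega)}$ controls $\|u-u^*\|_{H^1}^2$, up to a term of order $\lambda^{-1}\|\partial_\nu u^*\|^2$. Write $v=u-u^*$. Integrating by parts with $\Delta u^*=0$ gives
\[
\frac12\int|\nabla u|^2-\frac12\int|\nabla u^*|^2 = \frac12\int|\nabla v|^2+\int_{\partial\Omega} v\,\partial_\nu u^* .
\]
Since $\nabla u^*\in L^q(\partial\Omega)$ (point 2), Cauchy--Schwarz absorbs the boundary term into $\lambda\|v\|^2_{L^2(\partial\Omega)}$ at cost $\lambda^{-1}\|\partial_\nu u^*\|^2_{L^2(\partial\Omega)}\lesssim \|g\|_\B^2\lambda^{-1}$. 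A Friedrichs inequality, $\|v\|_{H^1}^2\lesssim \|\nabla v\|^2_{L^2}+\|v\|^2_{L^2(\partial\Omega)}$, then yields the $H^1$ control.

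\textbf{Step 2: approximation.} By point 3, for each $\eps$ there is $u_\eps\in\B$ with $u_\eps=g$ on $\partial\Omega$, $\|u_\eps\|_\B\lesssim\|g\|_\B|\log\eps|$ and $\|\nabla(u_\eps-u^*)\|_{L^2}\lesssim\|g\|_\B\,\eps$. I will take $\eps$ a power of $1/m$, so $|\log\eps|\sim\log m$. Next I discretize $u_\eps$ by a width-$m$ network $u_m$ with $\|\theta_m\|^2\lesssim\|u_\eps\|_\B$, using the standard Maurey/Monte Carlo sampling of Barron functions. This gives $\|u_m-u_\eps\|_{H^1}^2+\|u_m-u_\eps\|^2_{L^2(\partial\Omega)}\lesssim \|u_\eps\|_\B^2/m$, possibly with a log factor for gradients of ReLU.

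There are two natural choices of parametrization, and they must be reconciled with the scaling $\lambda=\sqrt m/\log m$ and $\mu=\gamma/\sqrt m$:
\begin{itemize}
\item the rescaled parametrization, in which the weight-decay term is bounded by $\|\cdot\|_\B$;
\item the unscaled parametrization, in which weight decay $\frac\mu2\|\theta\|^2$ at a balanced minimizer equals $\mu$ times the path norm, i.e.\ $\mu\|u\|_\B$.
\end{itemize}
Plugging $u_m$ into the comparison gives a penalty term $\lambda\|u_m-g\|^2_{L^2(\partial\Omega)}\lesssim \lambda\|g\|^2_\B\log^2m/m$ and a weight-decay term $\mu\|g\|_\B\log m$. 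Both are $\lesssim(\|g\|_\B^2+1)\log m/\sqrt m$. The Step 1 error $\lambda^{-1}\|g\|_\B^2$ is of the same order.

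\textbf{Step 3: generalization.} The weight decay forces the minimizer to have path norm $\|\hat u\|_\B\lesssim \widehat E(\theta_m)/\mu$, which is polynomially bounded in $m$. On the Barron ball of that radius I will bound the uniform deviation between empirical and true risk with Rademacher complexity plus a concentration inequality (McDiarmid or Bernstein). The boundary part uses the Rademacher bound for Barron balls of the squared loss, which is $\sim R^2/\sqrt n$. The interior Dirichlet energy is harder, since the gradients of ReLU networks are discontinuous. For it I will use the complexity of the class $\{\sigma'(w\cdot x+b)\,w\}$, giving $\|\hat u\|_\B^2\sqrt{\log(1/\delta)}/\sqrt N$. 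With $n,N\gg m^2$ and the norm bound, these terms are at most $\sqrt{\log(1/\delta)}\log m/\sqrt m$. The two concentration events account for probability $1-2\delta$.

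\textbf{Main obstacle.} I expect two difficulties. The first is the uniform concentration for the gradient term of discontinuous ReLU derivatives. The second is ensuring that the a priori norm bound on $\hat u$ (from $\mu$ and the condition $\gamma\ge(\|g\|_\B+1)/\sqrt m$) is small enough that the sample sizes $N,n\gg m^2$ suffice. If the crude bound $\|\hat u\|_\B\lesssim\sqrt m\,\widehat E(\theta_m)/\gamma$ is too large, I will first show $\widehat E(\theta_m)\lesssim 1$ and then bootstrap to obtain the norm bound.
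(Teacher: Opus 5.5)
Your overall architecture is the paper's. You use an energy-to-$H^1$ estimate obtained by integrating by parts against $\partial_\nu u^*\in L^2(\partial\Omega)$. You build a competitor from point 3 of Theorem \ref{theorem main} with $\eps$ a power of $1/m$, followed by Maurey sampling. You control both terms by uniform deviation bounds. For the Dirichlet term, the paper expands $\|\nabla u\|^2$ as a double sum over neurons and uses the finite VC dimension of intersections of two half-planes, which gives the scale-free bound $C\big(\sum_i|a_i|\|w_i\|\big)^2\sqrt{\log(1/\delta)/N}$. Your Step 1 is slightly cleaner than the paper's. Absorbing the boundary term by Young's inequality gives $E_\lambda(u)\ge E_{Dir}(u^*)-C\|g\|_\B^2/\lambda$ for \emph{every} $u$, which makes the paper's case distinction (Steps 4 and 5b) unnecessary.

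The gap is the bound on the norm of the minimizer, which is the core of the paper's proof (its Step 4). Your Step 3 claims that with $\|\hat u\|_\B\lesssim\widehat E(\theta_m)/\mu$ and $n,N\gg m^2$ the deviation terms are $\lesssim\sqrt{\log(1/\delta)}\log m/\sqrt m$. They are not.
\begin{itemize}
\item $\widehat E(\theta_m)\approx E_{Dir}(u^*)$ is genuinely of order one, so this radius is $R\sim\sqrt m$.
\item The Dirichlet deviation $R^2/\sqrt N\sim m/\sqrt N$ is then merely $o(1)$.
\item The boundary deviation carries the factor $\lambda$, so it is $\sim m^{3/2}/(\log m\sqrt n)$ and need not even be small.
\end{itemize}
Your fallback (``show $\widehat E(\theta_m)\lesssim 1$, then bootstrap'') cannot beat $\sqrt m$ for the same reason.

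The missing idea is a cancellation. Write
\[
\mu\|\hat\theta\|^2\le\widehat E(\theta_m)-\big[E_{Dir,N}(\hat u)+\lambda E_{bd,n}(\hat u)\big].
\]
Bound $\widehat E(\theta_m)\le E_{Dir}(u^*)+C\|g\|_\B^2\log m/\sqrt m+C\mu\|g\|_\B\log m$ via the competitor. Bound the bracket from below by your Step 1 plus the deviation at $\hat\theta$. Then $E_{Dir}(u^*)$ cancels, and you divide by $\mu=\gamma/\sqrt m$; this is where the lower bound on $\gamma$ is used, effectively as $\gamma\gtrsim\|g\|_\B+1$.

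For $z=\|\hat\theta\|^2$ this yields a self-referential inequality $z\le B+\eta z^2$ with $B\sim(\|g\|_\B+1)\log m$. It is self-referential because the deviation is evaluated at the unknown norm $z$. You therefore need the deviation bounds in scale-free form, valid for all radii on one event, which homogeneity provides. The admissible set is $[0,z_-]\cup[z_+,\infty)$ with $z_-\approx B$ and $z_+\approx 1/\eta$. The paper excludes the upper branch using the crude bound $z\lesssim\sqrt m$, and this is where $n,N\gg m^2$ enters in an essential way. Once $\|\hat u\|_\B\lesssim(\|g\|_\B+1)\log m$, your Step 3 closes as you describe.

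When you carry this out, keep the factor $\lambda$ on the boundary deviation; the display in the paper's Step 4 drops it. With that factor, $z_+\sim\gamma\log m\,\sqrt n/m$, which need not exceed $\sqrt m$ if $n/m^2\to\infty$ slowly.

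A clean remedy within your framework is to keep $\lambda E_{bd,n}(\hat u)$ empirical and let the continuous boundary error enter only through the cross term $\int_{\partial\Omega}v\,\partial_\nu u^*$. Bounding $\lambda t-\|\partial_\nu u^*\|_{L^2(\partial\Omega)}\sqrt{t+G_{bd}}$ from below over $t\ge0$ shows that the boundary deviation $G_{bd}$ then costs only $C\|g\|_\B^2/\lambda+\|\partial_\nu u^*\|_{L^2(\partial\Omega)}\sqrt{G_{bd}}$. That is at most $C\|g\|_\B^2/\lambda+C\|g\|_\B(z+\|g\|_\B)\,n^{-1/4}$, which is linear in $z$. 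After division by $\mu$, its coefficient vanishes when $n\gg m^2$, so it can be absorbed.
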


The same statement holds for deterministic choices of integration points as long as empirical averages converge to integrals at the Monte-Carlo rates $n^{-1/2}, N^{-1/2}$ uniformly over Barron functions on $\partial\Omega$ and indicator functions of the intersection of half-spaces inside $\Omega$. Faster rates of convergence can reduce the number of sample points required. 

The Monte-Carlo rate of convergence in $n, N$ is commonly found in Barron space a priori error estimates whereas in $m$, estimates often depend on $1/m$ or $\log m/m$ \cite{E:2018ab, park2023minimum}. The main obstruction in the deep Ritz setting is that boundary values are not enforced exactly, but only by penalty. For finite $\lambda$, the minimizer of the problem is {\em not} the solution to the PDE \eqref{eq poisson equation intro}, but a harmonic function with an $L^2$-close boundary condition which may have lower Dirichlet energy. The choice of $\lambda_m$ balances between enforcing boundary values exactly and allowing for spatial and function space discretization.

The Barron function space, in which we are considering boundary values and searching for solutions, is primarily well-understood in one dimension. Here $u \in \B(a,b)$ if and only if $u''$ is a (signed) Radon measure such that 
\begin{equation}\label{eq barron condition 1d}
\int_{(a,b)} \sqrt{1+x^2}\,\d |u''| < +\infty
\end{equation}
where $\d|u''|$ denotes the integral with respect to the variation measure of $u''$. If $u''\in L^1(a,b)$, this is simply $|u''|\dx$. If $|a|, |b| <\infty$, this means that $\B(a,b)$ is the space of all functions whose first (distributional) derivative is in $BV(a,b)$. Even in one dimension, this is a stricter condition than the Lipschitz property, which can be formulated as $u' \in L^\infty(a,b)$ by Rademacher's Theorem. Barron space more closely resembles the Sobolev space $W^{2,1}$, but only requires a measure-valued second derivative rather than a function in $L^1$. In the following, we show that the characterization \eqref{eq barron condition 1d} carries over to the one-dimensional boundary of the square.

\begin{theorem}[Barron boundary data for rectangular domains]\label{theorem boundary}
Let $\Omega= (0,a)\times (0,b)$ be a rectangular domain in $\R^2$ for $a,b<\infty$ and $g:\partial\Omega\to\R$ a continuous function. Then there exists $G\in \B(\R^2)$ such that $G(x) = g(x)$ for $x\in\partial\Omega$ if and only if the functions
\[
g(x,0), \quad g(x,b) \text{ are in } \B(0,a)\qquad \text{and}\quad g(0,y), \:\: g(a,y) \in\B(0,b).
\]
%i.e.\ if and only if
%\[
%\int_0^a \big|\partial_{xx} \,g(x,0)\big| + \big|\partial_{xx}\,g(x, b)\big|\dx + \int_0^b \big|\partial_{yy}\, g(0,y)\big| + \big|\partial_{yy}\,g(a,y)\big|\dy < + \infty.
%\]
\end{theorem}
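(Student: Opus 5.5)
Both directions rest on the one-dimensional characterization \eqref{eq barron condition 1d} and on restricting Barron functions to lines.

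\emph{Necessity.} If $G\in\B(\R^2)$ with $G=g$ on $\partial\Omega$, write $G(x)=\E_\pi[a\,\sigma(w\cdot x+b)]$ with $\E_\pi[|a|(|w|+|b|)]<\infty$. On the bottom edge, $g(x,0)=G(x,0)=\E_\pi[a\,\sigma(w_1x+b)]$. This is a one-dimensional Barron representation with parameters $(a,w_1,b)$, and its Barron norm is at most $\|G\|_\B$ because $|w_1|\le|w|$. The same argument applies to the other three edges after an affine reparametrization of the variable. So each edge trace lies in $\B$ of the corresponding interval.

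\emph{Sufficiency.} We construct $G$ piece by piece.

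\textbf{Step 1: reduce to vanishing corners.} Subtract a Barron function matching $g$ at the four corners. The functions $1$, $x$, $y$, $xy$ restricted to the compact square suffice. Note that $xy$ is Barron on bounded sets, since $xy=\frac14((x+y)^2-(x-y)^2)$ and $t^2$ is Barron on bounded intervals. After this subtraction, $g$ vanishes at all corners and each edge function still lies in the one-dimensional Barron space.

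\textbf{Step 2: extend each edge separately.} It is enough to find, for each edge, $G_{e}\in\B(\R^2)$ with $G_e=g$ on edge $e$ and $G_e=0$ on the other three edges; then $G=\sum_e G_e$.

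Take the bottom edge and set $h(x)=g(x,0)$, so $h\in\B(0,a)$ with $h(0)=h(a)=0$. Write
\[
h(x)=\alpha+\beta x+\int_{[0,a]}\sigma(x-s)\,\d h''(s).
\]
The question is which two-dimensional ridge functions restrict correctly. A ReLU $\sigma(\xi\cdot z-c)$ that vanishes on the three other edges has to be supported near the bottom edge. The natural candidate uses a tent-like combination of ridge functions, such as
\[
\phi_s(x,y)=\sigma\big(\min(x-s,\ \text{something})-Ky\big),
\]
but a minimum of ridges is not a single ridge. I would instead use
\[
\sigma(x-s-Ky)\quad\text{and}\quad\sigma(s-x-Ky),
\]
each of which vanishes when $y$ is large, that is on the top edge. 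On the left and right edges these functions are nonzero only for small $y$.

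The slope $K$ will be chosen on the order of $\max(a,b)/b$, so that on the top edge $Ky\ge a$ and everything vanishes. The leftover contributions on the vertical edges are then handled as follows:
\begin{itemize}
\item they are themselves one-dimensional Barron functions in $y$ (piecewise linear, with controlled total variation of the derivative, weighted by the $\d|h''|$ mass);
\item they vanish at the corners thanks to Step 1;
\item they can be corrected by vertical-edge extensions of the analogous form, namely $\sigma(\pm(y-t)-Kx)$ and its reflections.
\end{itemize}

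\textbf{Main obstacle.} Corrections on the vertical edges reintroduce errors on the horizontal edges, so the correction process has to terminate or converge. I would try to arrange it so that these secondary errors are supported in shrinking corner neighbourhoods and form a geometric series in Barron norm. An alternative is a direct construction exploiting that $h(0)=0$: express $h$ near $0$ as $\int\big(\sigma(x-s)-\text{linear}\big)$ and use $\sigma(x-Ky)-\sigma(-Ky)\cdot(\ldots)$ type combinations that vanish identically on $\{x=0\}$. Making this cancellation exact while keeping the measure $\int(1+|w|)\,\d|\,\cdot\,|$ finite, near corners where the ridge directions must be steep, is where I expect most of the work.
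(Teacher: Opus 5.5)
Your necessity argument, the subtraction of the bilinear corner interpolant, and the reduction to one edge at a time all agree with the paper. The gap is in the step that carries the content. You never construct $G_e\in\B(\R^2)$ with $G_e=g$ on edge $e$ and $G_e=0$ on the other three edges, and the route you sketch does not lead there.

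Extend $h$ by zero outside $[0,a]$. This is legitimate since $h(0)=h(a)=0$, and it only adds Dirac masses at $0$ and $a$ to $h''$. Then $h(z)=\int\sigma(z-s)\,\d\mu(s)$ with $\mu$ a finite measure on $[0,a]$, $\mu(\R)=0$, and no affine part. Now take any combination of your ridges with a common slope $K$, namely $\int\sigma(x-s-Ky)\,\d\mu_1(s)+\int\sigma(s-x-Ky)\,\d\mu_2(s)=H_1(x-Ky)+H_2(x+Ky)$ with $\mu_1,\mu_2$ on $[0,a]$. Its trace on $\{x=0\}$ is $H_2$ on $[0,Kb]$, and its trace on $\{x=a\}$ is $H_1$ on $[a-Kb,a]$. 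Once $Kb\ge a$, as you choose, requiring both traces to vanish forces $H_1=H_2=0$ on $[0,a]$, so the bottom trace vanishes as well. An error on a vertical edge is therefore unavoidable.

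The correction loop does not decay either. Take the error $h(a-Ky)$ on $\{x=a\}$ and correct it by $e(y+K'(a-x))$. To vanish on $\{x=0\}$ this needs $KK'\ge 1$, and it sends $-h(a-KK'(a-x))$ back to the bottom edge. For $KK'=1$ this cancels the original data outright. For $KK'>1$ it is a copy of $h$ squeezed into a corner neighborhood, with the same sup norm and $KK'$ times the Lipschitz constant of $h$. Since $\|\cdot\|_\B$ dominates the Lipschitz constant, rescaled copies of $h$ in shrinking corner neighborhoods have growing, not decaying, Barron norm. They cannot form a convergent series in $\B$; shrinking supports are exactly the wrong direction.

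The missing idea is to replace the parallel family by a fan of neurons through a corner. This is the positive-homogeneity lift already used in Step~1 of the proof of Lemma~\ref{lemma barron regularity}. For the top edge, with $h(x)=g(x,b)$ extended by zero as above, set
\[
\tilde h(x,y)=\int\sigma\Big(x-\frac sb\,y\Big)\,\d\mu(s)=\frac yb\,h\Big(\frac{bx}{y}\Big)\qquad (y>0).
\]
This is a Barron function with norm at most $\int\sqrt{1+s^2/b^2}\,\d|\mu|(s)$, and all its kink lines pass through $(0,0)$. It equals $h$ on $\{y=b\}$. Because $h$ vanishes outside $(0,a)$ and $\mu(\R)=0$, it vanishes on $\{y\ge 0\}$ outside the open cone $\{0<x<\frac ab\,y\}$, and that cone meets $\partial\Omega$ only in the top edge. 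No single neuron vanishes on the other edges, but the superposition does so exactly, so no correction step is needed. Summing the four analogous lifts gives $G$.
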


The continuity condition on $g$ is a matching condition between the four functions of one variable at the corners of the rectangle. Since Barron functions are continuous, it is not merely sufficient, but in fact necessary.
Theorem \ref{theorem boundary} characterizes the class of boundary data for which the results of Theorem \ref{theorem main} are relevant. It is large compared to classical regularity classes $C^{2,\alpha}$ for boundary values, but small compared to the class $H^{1/2}$ for weak solutions with $H^1$-regularity. It is not comparable to the class $H^{3/2}(\partial\Omega)$, which induces $H^2$-regular weak solutions: The Hilbert-Sobolev class requires less differentiability than the Barron class (a half derivative in place of a full derivative) but higher integrability (square integrability instead of integrability/measure regularity). 

To focus on the impact of the boundary condition, we deliberately left the characterization of the right hand side in $\Delta u = \Delta U$ somewhat vague. Since $\B\subseteq W^{1,\infty}\subseteq H^1$, we have $\Delta U \in H^{-1}$ by design, i.e.\ the Laplacian is a well-defined object in a classical Hilbert space. Using the fact that the second derivative of the ReLU activation function $\sigma$ is a Dirac measure and that Barron spaces are convex combinations of ridge function ReLU activations, it is also a measure. As a minimal example we note that every Barron function $f$ can be written as the Laplacian of a Barron function $U$ (in a compact domain) since the anti-derivative $\max\{0, z^3\}/6$ of the ReLU activation is a Barron function -- compare e.g.\ \cite[Section 2.2]{wojtowytsch2020some}. However, the class of admissible right hand sides is significantly larger and contains e.g.\ unbounded functions such as $(2-\gamma)(1-\gamma) \,|x_1|^{-\gamma} = \Delta |x_1|^{2-\gamma}$ for $\gamma <1$ as well as certain proper measures.

The solution of partial differential equations in Barron spaces has been studied in cases where the solution is guaranteed to be a Barron function \cite{wojtowytsch2020some} or when the solution is only assumed to be in a Sobolev space \cite{lu2021machine}. The second work also studies in detail the convergence of regularized Deep Ritz and PINN solutions. On the whole space, approximation results of the form $\|u-u_\eps\|_{H^1}\leq\eps$ have been obtained for the solution of uniformly elliptic PDEs with non-constant Barron space coefficients where $\|u_\eps\|_\B$ grows slightly faster than polynomially in $1/\eps$, comparable to $\exp(C\log^2(1/\eps))$. The solution of PDEs in the related `spectral Barron classes' with a homogeneous Neumann boundary condition on the unit cube in $d$ dimensions has been studied, for instance, in \cite{lu2022priori, chen2023regularity}. A rate of convergence to the solution of a PDE was obtained for a greedy algorithm approximation in \cite[Theorem 8]{siegel2023greedy} under the assumption that solutions lie in Barron space (or more generally, the span of a suitable dictionary), but without consideration for whether regularity of the data would imply regularity of solutions.

Partial extensions of this work have been obtained \cite{vaishampayan2024solving} for powers of the ReLU activation function, and a preliminary study on the implicit bias of PINNs with ReLU activation was conducted in this context in \cite{vaishampayan2024shallow}.

The article is structured as follows. In Section \ref{section review}, we review background material related to Sobolev spaces and Barron spaces. Sections \ref{section theorem main}, \ref{section deep ritz} and \ref{section boundary} are devoted to the proofs of Theorems \ref{theorem main}, \ref{theorem deep ritz} and \ref{theorem boundary} respectively. In Section \ref{section conclusion}, we briefly discuss the impact of our results. Some technical results are postponed to the appendices.

\section{Review of related works and concepts}\label{section review}

\subsection{Sobolev spaces and regularity theory}\label{section background sobolev}

We assume that the reader is familiar with the basic concepts and properties of Sobolev spaces, including spaces $H^s$ of fractional order $s$. Excellent introductions can be found for instance in \cite{MR2759829, dobrowolski2010angewandte, di2012hitchhikers, leoni2023first}. For an introduction to elliptic regularity theory, see e.g.\ \cite{grisvard2011elliptic,gilbarg2015elliptic}.

Fractional order Sobolev spaces play a role for us in two places: (1) as spaces which embed into Barron spaces and (2) as the natural trace spaces for (integer) Sobolev functions on domains. Namely, if the boundary of $\Omega$ is at least Lipschitz-regular, then the trace operator $T$ which restricts a continuous function $u$ on $\overline\Omega$ to a continuous function $u|_{\partial\Omega}$ on the boundary $\partial\Omega$ extends to a continuous operator $T: W^{1,p}(\Omega)\to W^{1-\frac1p, p}(\partial\Omega)$ \cite[Section 15.3 and Proposition 14.40]{leoni2017first}. Since fractional Sobolev spaces embed {\em compactly} into Lebesgue spaces with subcritical exponent $p< p^*$ (see \cite[Theorem 6.13]{leoni2023first} for the whole space setting), we find in particular that the trace operator is {\em compact} as an operator $T:W^{1,p}(\Omega)\to L^p(\partial\Omega)$ \cite[Corollary 9.16]{leoni2023first}.

Sobolev and H\"older spaces can be characterized as spaces of functions which are well approximated by polynomials on small scales. For H\"older spaces, this is essentially by definition: For instance, 
\[
u\in C^{0,\alpha}(\R^d) \qquad\Ra\quad  \inf_{c\in\R} \|u- c\|_{L^\infty(B_r(0))} \leq \|u- u(0)\|_{L^\infty(B_r(0))} \leq [u]_{C^{0,\alpha}(B_r(0))} r^\alpha
\]
for all $r>0$. Conversely, if the estimate $\inf_{c\in\R}\|u- c\|_{L^\infty(B_r(x))}\leq C\,r^\alpha$ holds with the same constant $C$ for all $x\in \R^d$ and $r>0$, then $u$ is H\"older continuous. The argument can be integrated: If $\mathcal P_k$ denotes the space of polynomials of degree at most $k$ in $d$ variables, then 
\[
u\in C^{m,\alpha}(\R^d) \quad \Ra \quad \inf_{p\in\mathcal P_m} \|u-p\|_{C^k(B_r(0))} \leq C_{m,d}\,[D^mu]_{C^{0,\alpha}(B_r(0))} \,r^{m - k + \alpha} \qquad \forall\ x\in \R^d, \:r>0.
\]
Note that only the H\"older constant of the tensor of highest order derivatives is needed on the right hand side since the polynomial $p$ can be used to match the first $m$ derivatives suitably. In one dimension, we have
\[
\inf_{p\in\mathcal P_{m+1}} \|u-p\|_{C^0(-r,r)} = \inf_{p\in\mathcal P_{m+1}} \max_{x\in[-r,r]} \big|u-p\big|(x) 
	\leq \max_{x\in[-r,r]} \left|u(x) -\sum_{i=0}^{m+1} \frac{u^{(i)}(0)}{i!}\,x^i\right|
\]
and by induction
\begin{align*}
\max_{x\in[-r,r]} \left|u(x) -\sum_{i=0}^{m+1} \frac{u^{(i)}(0)}{i!}\,x^i\right| 
	&\leq \max_{x\in[-r,r]}\int_0^x \left|u' - \sum_{i=0}^m \frac{u^{(i)}(0)}{i!}\,s^i\right|\ds\leq \int_0^r C\,s^{m+\alpha}\ds = C r^{m+1+\alpha}
\end{align*}
The case $k\geq 1$ is true trivially by the induction hypothesis. 
For Sobolev functions, the analogous result is the Bramble-Hilbert Lemma -- see \cite{bramble1970estimation, dupont1980polynomial}: If $u\in W^{m,p}(B_r(0))$ in dimension $d$, then for all $0\leq k\leq m$ we have
\[
\inf_{p\in \mathcal P_{m-1}} \|u- p\|_{W^{k,p}(B_r(0))} \leq C_{m,d}\,\|D^mu\|_{L^p(B_r(0))} \,r^{m-k}.
\]
Again, the right hand side only depends on the semi-norm which measures the magnitude of the highest order derivatives.

\subsection{Neural networks and Barron spaces} \label{section barron review}
In a similar spirit, Barron spaces are comprised of functions which can be approximated well (globally) by ReLU networks with a single hidden layer. Barron spaces or slight variations thereof are referred to variously as $\mathcal F_1$ \cite{bach2017breaking}, Barron space \cite{E:2018abpub, weinan2019lei, review_article,han2023class}, Radon-BV spaces \cite{parhi2021banach, binev2022optimal, devore2023weighted} and the variation spaces of the ReLU dictionary \cite{siegel2021characterization,siegel2019approximation,siegel2021optimal,siegel2021sharp} by different authors. We note that the term `Barron space' is ambiguous and is at times also used to refer to function classes characterized by a moment condition on the Fourier transform \cite{voigtlaender2022p, chen2023regularity} or Fourier coefficients \cite{lu2022priori}. The relationship between the `spectral' and `representational' Barron classes is explored e.g.\ in \cite{barron_new, barron_boundaries,wu2023embedding}. In this note, we will always consider the {\em representational} version which is tailored to ReLU neural networks with a single hidden layer.

Let $\sigma(z) = \max\{z,0\}$. Recall that a neural network with a single hidden layer can be represented as $f_m(x) = \frac1m\sum_{i=1}^m a_i\,\sigma(w_i\cdot x+b_i)$.\footnote{\ In practical applications, the scaling factor $1/m$ is often dropped.}\ A function $f:\R^d\to\R$ is called a {\em Barron function} if there exists a probability distribution $\pi$ on the parameter domain $\R\times \R^d\times\R$ such that
\[
f(x) = f_\pi(x):= \E_{(a,w,b)\sim\pi} \big[a\,\sigma(w\cdot x+b)\big]\quad\forall\ x\in\R^d, \qquad \E_{(a,w,b)\sim\pi}\big[|a|\,\sqrt{\|w\|^2+|b|^2}\big] < +\infty,
\]
i.e.\ Barron spaces are comprised of functions where the empirical average sum for $a\,\sigma(w\cdot x+b)$ is replaced by a general expectation. The moment bound on $\pi$ enforces the convergence of the integral and implies that a Barron function $f$ is Lipschitz-continuous. The distribution $\pi$ in the representation of $f$ is never unique \cite[Section 2.1]{barron_new}. The norm in Barron space is therefore defined using an infimum:
\[
\|f\|_\B = \inf\left\{\E_{\pi}\left[|a|\,\sqrt{\|w\|^2+|b|^2}\right] : \pi\text{ s.t. }f\equiv f_\pi\right\} 
	= \frac12\inf\left\{\E_{\pi}\big[|a|^2 + \|w\|^2+|b|^2\big] : \pi\text{ s.t. }f\equiv f_\pi\right\} .
\] 
The identity between different definitions arises from the positive homogeneity $\sigma(\lambda z) = \lambda\,\sigma(z)$ for all $\lambda>0$, $z\in\R$. It only applies to ReLU and leaky ReLU activation in this form, but powers of ReLU enjoy similar homogeneity properties.

The variability between the different kinds of spaces arises depending on whether the magnitude of bias variables is controlled in the Barron norm, or just the magnitude of weights \cite{wojtowytsch2022optimal}. Either choice is valid, yielding perhaps surprisingly different results -- see e.g.\ \cite{boursier2023penalising} and \cite[Proposition 2.1]{wojtowytsch2022optimal}. Here, we opt for the version in which the magnitude of the bias is controlled, but we anticipate that our results are stable under making the opposite choice.

Since $\sigma'' = \delta$ is a Dirac measure at the origin, we find that $\Delta u$ is a locally finite signed measure on $\R^d$ for any Barron function $u$. In fact, in one dimension, this fully characterizes the class of Barron functions as 
\[
\B(\R) = \left\{f: \R\to\R \:\bigg|\: f'\in BV(\R)\text{ and } \int_\R \sqrt{1+x^2} \d |f''|_x<+\infty\right\}
\]
where $|f''|$ is the variation measure of the (measure-valued) second distributional derivative of $f$ \cite[Section 1.8]{evans2015measure}. See e.g.\ \cite{barron_new, li2020complexity, boursier2023penalising} for details.

 In higher dimensions $\Delta\,\sigma(w\cdot +b) = \|w\|\cdot \H^{d-1}|_{\{x : w\cdot x+b=0\}}$ is a multiple of  the $d-1$-dimensional Hausdorff measure on a hyperplane. Since a Barron function $u$ is a superposition of single neurons, its Laplacian is equally a superposition of area measures on hyperplanes. In particular $|\Delta u|(A) = 0$ if $\H^{d-1}(A) =0$. Consequently, there are many measures $\mu$ which cannot arise as the Laplacian of a Barron function. In a similar spirit, we may argue that $\Delta u \in W^{-1,\infty}(\Omega) = (W^{1,1}_0(\Omega))^*$ for $u\in \B\subseteq W^{1,\infty}(\Omega)$, i.e.\ $\Delta u$ enjoys a certain level of regularity. A characterization in this spirit in the Radon domain is given in \cite{ongie2019function}.

Similarly, we can argue that the Hessian of a Barron function is a measure with values in the space of symmetric $d\times d$-matrices. In this sense, Barron functions are sufficiently regular to consider linear (and even some non-linear) first and second order PDEs in them. In particular, we note that the functional $E_{q, PINN}'(u) = \int_\Omega |\Delta u|^q\dx$
can be extended directly to the entire Barron space in a natural way for $q=1$, but not $q>1$. However, many Barron functions are smooth and in fact, all sufficiently smooth functions are Barron (see the final point in the following list of known results on Barron spaces).

\begin{itemize}
\item Barron spaces are Banach spaces, but neither reflexive nor separable \cite[Remark 4.7]{barron_new}.

\item Barron spaces are particularly popular in high-dimensional analysis since they possess three favorable properties:
\begin{enumerate}
\item They are `large' in the sense that every linear method of approximation suffers from the `curse of dimensionality' in them \cite[Equation (1.33)]{siegel2021sharp}: For any function space $V_m$ of dimension $m$, the Kolmogorov $m$-width of $V$ in $\mathcal B$ is
\[
\sup_{\|f\|_\B\leq 1} \min_{f_m\in V_m} \|f-f_m\|_{L^2((0,1)^d)} \gtrsim m^{-\frac{3}{2d}}.
\]
The bound indicates that Barron spaces contain a variety of different functions, which is suggestive of the fact that they may be suited for the solution of a large class of problems.
%We take this as a suggestion that they contain a sufficiently large variety of functions to study a diverse class of problems in as the spaces must contain a large variety of functions.

\item They are `small' from an estimation perspective in the sense that the unit ball in Barron space has low Rademacher complexity \cite[Theorem 6]{weinan2019lei}. This means that integrals can be estimated well by Monte-Carlo averages {\em uniformly} over the function class, a property which is crucial when trying to optimize integral energies based on access only to a finite set of samples, see e.g.\ \cite[Proposition 5.1]{E:2018abpub} or \cite[Appendices D and E.1]{park2023minimum} and \cite[Lemma 26.2]{shalev2014understanding} for general Rademacher complexity-based generalization. The trade-off between approximation power and learnability of Barron spaces is discussed in \cite{approximationarticle}.

\item They are `small' from an approximation perspective when utilizing ReLU networks with a single hidden layer: Let $\Omega\subseteq B_R(0)\subseteq \R^d$ be a bounded open set. Then for every $m\in\N$ there exist $m$ weight vectors $(a_i, w_i, b_i)$ with $\frac1m \sum_{i=1}^m |a_i|\big(\|w_i\|+|b_i|\big) \leq \|f\|_\B$ such that $f_m(x) := \frac1m\sum_{i=1}^m a_i\sigma(w_i\cdot x+b_i)$ satisfies one of the following three bounds:

\begin{enumerate}
\item $\|u-u_m\|_{L^2(\Omega)} \leq C\max\{R,1\}\,\sqrt{|\Omega|}\,\|f\|_\B m^{-1/2 - 3/(2d)}$ \cite[Proposition 1]{bach2017breaking},  \cite{matouvsek1996improved}, 
\item $\|u-u_m\|_{L^\infty(\Omega)} \leq C\max\{R,1\}\,\|f\|_\B d^{1/2}m^{-1/2}$ \cite[Theorem 12]{review_article}, or
\item $\|u-u_m\|_{H^1(\Omega)} \leq C\max\{R,1\}\,|\Omega|\,\|f\|_\B d^{1/2}m^{-1/2}$ \cite[Equation (1.16)]{siegel2021sharp}, \cite[Equation (1.5)]{wojtowytsch2020some}.
\end{enumerate}

The proof of the third bound is common in dictionary learning, based on the Maurey-Barron-Jones Lemma \cite[Lemma 1]{barron1993universal}. It is most well-known in the context of $L^2$-spaces. The proof applies the same way in other Hilbert spaces, such as $H^1$, or even type-II Banach spaces such as $L^p$ or $W^{1,p}$ for $p\in[2,\infty)$ -- see \cite[Section 2]{siegel2021sharp}. The same claim therefore remains true in $W^{1,p}(\Omega)$ for all bounded open sets $\Omega$ and $p<\infty$, with a constant depending on $p$.

The first estimate is a dimension-dependent improvement of the Maurey-Barron-Jones bound which exploits the H\"older-continuity in $L^2$ of the dictionary with respect to its parametrization. The second bound can be proved using Rademacher complexity in a fashion which exchanges the classical roles of function and evaluation point.
\end{enumerate}

\item Any Barron function allows a decomposition
\[
f(x) = f_0(x) + \sum_{i=1}^\infty f_i(P_ix- z_i)
\]
where  $f_0 \in \B(\R^d)\cap C^1(\R^d)$ and for every $i\geq 1$, $P_i$ is an orthogonal projection to $\R^{d_i}$ with $1\leq d_i\leq d$, $z_i\in \R^{d_i}$ and $f_i:\R^{d_i}\to\R$ is a Barron function which is $C^1$-smooth on $\R^{d_i}$ except at the origin \cite[Theorem 5.9]{barron_new}.

In heuristic terms, we can say that the singular set of a Barron function is a union of low-dimensional affine spaces and that the nature of the singularity does not change along the space. This structure theorem has profound consequences in the context of PDE analysis:
\begin{enumerate}
\item Let $\phi:\R^d\to\R^d$ be a $C^1$-diffeomorphism such that $f\circ \phi$ is a Barron function whenever $f$ is a Barron function. Then $\phi$ is affine linear \cite[Theorem 5.18]{barron_new}.
 
\item Assume that $f_1\in \B$ and $f_2\in C^\infty(\R^d)$. Then in general $f_1\cdot f_2\notin \B$ \cite[Remark 5.16]{barron_new}.
\end{enumerate}
In particular, common techniques such as straightening the boundary or using a partition of unity cannot be applied in Barron spaces. This fragility under slight changes motivates our focus on a very particular setting in this article.

\item Every Barron function is Lipschitz-continuous: $\B\embeds W^{1,\infty}(\Omega)$ for all bounded domains $\Omega$ \cite[Theorem 3.3]{barron_new} and $\nabla u\in BV(\Omega)$ for all bounded domains $\Omega$ \cite[Lemma 1]{barronlipschitz}. 

Since the Barron norm bounds the Lipschitz-constant, we find that $\B$ embeds compactly into $C^0(K)$ for all compact sets $K$ and into $L^p(\mu)$ for all measures $\mu$ on $\R^d$ with finite $p$-th moments. Since we have been unable to find a reference, a full proof is therefore given in the Appendix in Lemma \ref{compact embedding theorem}. A similar argument is used in \cite[Corollary E.2]{park2023minimum}.

\item On the other hand, all sufficiently smooth functions are Barron functions:
\begin{enumerate}
	\item Let $n=d+1$ if $d$ is odd and $n = d+2$ if $d$ is even. Then $W^{n,1}(\R^d)\embeds \B$ \cite[Corollary 1]{ongie2019function}. 
	
	\item Let $f\in H^s(\R^d)$ for $s>d/2+1$. Then for every compact set $K\subseteq \R^d$, there exists $F\in\B$ such that $f(x) = F(x)$ for all $x\in K$ -- for a more detailed statement and references, see the appendix.

\end{enumerate}
In particular, $C_c^\infty(\R^d)\subseteq \B(\R^d)$.
\end{itemize}

\section{Proof of Theorem \ref{theorem main}: Regularity theory with Barron data}\label{section theorem main}

We will prove Theorem \ref{theorem main} in several steps: Initially, we consider the Laplace equation locally on a half-space $\{x\in\R^2: x_2>0\}$. Here, we can find an exact solution with ReLU boundary data (Figure \ref{figure harmonic function}). Second, we move to a sector $\{x\in\R^2: x_1>0, x_2>0\}$. The boundary condition on the new portion of the boundary is enforced using the method of mirror charges (Figures \ref{figure quadrant 1} and \ref{figure quadrant 2}). Finally, we consider a rectangular domain, where we need a correction term in addition to the explicit contributions. We use regularity theory to show that the corrector is sufficiently smooth to be a Barron function.

In the following, we always assume that $\eps\in(0,1/2)$ to ensure that $1\lesssim |\log\eps|$.

\subsection{Half-spaces}

Let $H:=\R^2_+ =  \{x\in\R^2 : x_2>0\}$ and $\Omega_R = H\cap B_R(0)$, $\Omega:= \Omega_1$. We consider the problem of solving Laplace equation $-\Delta u =0$  in $H$ with boundary condition $u(x) = \sigma(x_1)$ on the {\em straight} part of the boundary (and no condition on the curved segment). For context, we first prove a result in classical function classes before we turn our attention to Barron space.

For any function class $X$ over $\Omega$, denote $X_\sigma := \{u\in X : u(x_1, 0) = \sigma(x_1)\}$, where the boundary equality is understood in the sense of traces for classes $X$ whose elements may be discontinuous.

\begin{lemma}\label{lemma sobolev regularity}
\begin{enumerate}
\item The function 
\begin{equation}\label{eq harmonic function}
u^*(x) = \frac1\pi \left(x_1\cdot \arctan\left(\frac {x_1}{x_2}\right) - \frac {x_2}2\,\log(\|x\|^2)\right) + \frac{x_1}2
\end{equation}
satisfies
\[
\lim_{x_2\to 0^+} u^*(x_1,0) = \sigma(x_1), \qquad \Delta u^*(x) = 0 \qquad \forall\ x\in \R^2_+.
\]

\item $u^*\in W^{1,q}_{loc}(\R^2_+)\cap W^{2,p}_{loc}(\R^2_+)$ for all $q<\infty$ and $p<2$ with
\[
\|u^*\|_{W^{1,q}(\Omega)} \lesssim q, \qquad \|u\|_{W^{2,p}(\Omega)} \lesssim \left(\frac{1}{2-p}\right)^\frac1p
\]

\item There exists no function $u$ on $\R^2_+$ with boundary values $\sigma(x_1)$ which is both harmonic and locally Lipschitz-continuous, but there exists a family of functions $u_\eps$ such that $u_\eps(x_1,0) = \sigma(x_1)$ and $u_\eps\to u^*$ in $W^{1,q}_{loc}(\R^2_+)\cap W^{2,p}_{loc}(\R^2_+)$ for all $q\in [1,\infty)$ and $p\in[1,2)$.

\item There is no function $u\in H^2_{loc}(\R^2_+)$ such that $u(x_1, 0) = \sigma(x_1)$.
\end{enumerate}
\end{lemma}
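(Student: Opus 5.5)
The plan is to verify the four claims by direct computation with the explicit formula \eqref{eq harmonic function}, and to use uniqueness for harmonic functions with zero boundary data for the negative statements. \textbf{Step 1 (harmonicity and boundary values).} Differentiating \eqref{eq harmonic function} gives
\[
\partial_1u^*(x) = \frac1\pi\arctan\Big(\frac{x_1}{x_2}\Big) + \frac12,\qquad \partial_2 u^*(x) = -\frac1\pi\big(1+\log\|x\|\big),
\]
and hence
\[
\partial_{11}u^* = \frac1\pi\,\frac{x_2}{\|x\|^2},\qquad \partial_{22}u^* = -\frac1\pi\,\frac{x_2}{\|x\|^2},\qquad \partial_{12}u^* = -\frac1\pi\,\frac{x_1}{\|x\|^2}.
\]
So $\Delta u^*=0$ in $\R^2_+$. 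As $x_2\to0^+$, we have $x_2\log\|x\|^2\to0$ and $\arctan(x_1/x_2)\to\pm\pi/2$. This yields the limit $x_1\cdot\mathrm{sign}(x_1)/2+x_1/2=\sigma(x_1)$.

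\textbf{Step 2 (Sobolev bounds).} $\nabla u^*$ is the sum of a bounded function and $-\frac1\pi\log\|x\|\,e_2$. In polar coordinates, $\int_{\Omega}|\log r|^q\,r\dr\le\int_0^1|\log r|^q r\dr=\Gamma(q+1)/2^{q+1}$. By Stirling, its $q$-th root is $\lesssim q$, which gives $\|u^*\|_{W^{1,q}(\Omega)}\lesssim q$. The Hessian is bounded by $C/\|x\|$, and $\int_0^1 r^{-p}\,r\dr = (2-p)^{-1}$, so $\|D^2u^*\|_{L^p(\Omega)}\lesssim(2-p)^{-1/p}$. The same computation on larger half-balls gives the local statements.

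\textbf{Step 3 (no Lipschitz harmonic extension; no $H^2$ extension).} Suppose $u$ is harmonic with boundary values $\sigma(x_1)$ and is locally Lipschitz. Then $h=u-u^*$ is harmonic in $\R^2_+$, continuous up to the boundary and vanishes on $\{x_2=0\}$. By the Schwarz reflection principle, its odd reflection is harmonic on $\R^2$, so $h$ is smooth near $0$. But then $\partial_2u=\partial_2u^*+\partial_2h=-\frac1\pi\log\|x\|+O(1)$ is unbounded near the origin, a contradiction. No growth condition at infinity is needed, since only a neighbourhood of $0$ enters. For (4), let $u\in H^2_{loc}(\R^2_+)$. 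Then $\partial_1u\in H^1_{loc}$, and its trace lies in $H^{1/2}_{loc}$ of the boundary line. Tangential differentiation commutes with the trace, so this trace would be $\sigma'=\mathbf 1_{(0,\infty)}$, a jump function. A jump does not lie in $H^{1/2}$ (its Gagliardo seminorm $\iint |1_{s>0}-1_{t>0}|^2|s-t|^{-2}$ diverges logarithmically). Hence no such $u$ exists. The threshold $p<2$ in Step 2 is consistent with this: a jump lies in $W^{1-1/p,p}$ exactly when $p<2$.

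\textbf{Step 4 (approximating family), the expected main obstacle.} I would use a regularized solution with corrected boundary values:
\[
u_\eps(x)=u^*(x_1,x_2+\eps)-c_\eps+\varphi(x_2/\eps)\big(\sigma(x_1)-u^*(x_1,\eps)+c_\eps\big).
\]
Here $\varphi$ is a smooth cutoff with $\varphi(0)=1$ and $\varphi\equiv0$ on $[1,\infty)$. The constant $c_\eps=u^*(0,\eps)=-\frac{\eps}{\pi}\log\eps$ is chosen to remove the non-homogeneous logarithmic part. The resulting $u_\eps$ is Lipschitz and equals $\sigma(x_1)$ on the boundary. Its convergence to $u^*$ in $W^{1,q}_{loc}$ and $W^{2,p}_{loc}$ is handled in two parts.

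The shifted term converges by dominated convergence, since its derivatives are dominated by the integrable bounds of Step 2 uniformly in $\eps$.

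The correction term is the delicate part. I would use the near-homogeneity $u^*(\lambda x)=\lambda u^*(x)-\frac{\lambda\log\lambda}{\pi}x_2$ to write
\[
\sigma(x_1)-u^*(x_1,\eps)+c_\eps=\eps\,g_1(x_1/\eps),\qquad g_1(t):=\sigma(t)-u^*(t,1)+u^*(0,1),
\]
where $g_1$ grows only logarithmically. Then I would estimate the terms produced by $\varphi''/\eps^2$, $\varphi'/\eps$ and by $\sigma''=\delta$ separately on the strip $\{x_2<\eps\}$. The estimate needed is that each of these is $o(1)$ in $L^p$ for $p<2$ (respectively in $L^q$ for first derivatives) once the strip width $\eps$ is accounted for. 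If the logarithmic growth of $g_1$ spoils this on $\Omega$, I would instead choose the cutoff width to be an independent parameter $\delta(\eps)\gg\eps$, balancing the two error sources.
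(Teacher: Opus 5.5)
Your arguments for (1), (2), (4) and for the non-existence half of (3) are correct and essentially coincide with the paper's: explicit derivatives, polar coordinates, odd reflection of $u-u^*$ near the origin, and $\mathbf 1_{(0,\infty)}\notin H^{1/2}$.

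The gap is in Step 4. Your $u_\eps$ does not converge to $u^*$ in $W^{2,p}_{loc}$ for any $p\ge 1$, because it does not even lie in $W^{2,1}_{loc}$. The shifted part $u^*(x_1,x_2+\eps)-c_\eps$ is smooth on the closed half-plane. The correction, however, contains $\varphi(x_2/\eps)\,\sigma(x_1)$, and nothing else in $u_\eps$ cancels its kink. Hence $u_\eps$ has a corner along the interior segment $\{0\}\times(0,\eps)$. Consequently $\partial_{11}u_\eps$ contains the singular measure $\varphi(x_2/\eps)\,\H^1|_{\{x_1=0\}}$. This measure has mass $O(\eps)$ but is not an $L^p$ function, so the ``$\sigma''=\delta$'' term cannot be made $o(1)$ in $L^p$. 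Taking a cutoff width $\delta(\eps)\gg\eps$ does not remove it.

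Even the absolutely continuous parts do not close:
\begin{itemize}
\item With width $\eps$, $\partial_{22}$ of the correction is $\eps^{-1}\varphi''(x_2/\eps)\,g_1(x_1/\eps)$, where $g_1(t)\approx\frac1\pi(1+\log|t|)$. So its $L^1(\Omega)$-norm is of order $|\log\eps|$. Your constant $c_\eps$ is the cause: it moves the $\eps|\log\eps|$ mismatch from $|x_1|\sim\eps$ out to $|x_1|\sim 1$.
\item With width $\delta$, the term $-\varphi(x_2/\delta)\,\partial_{11}u^*(x_1,\eps)$ has size $\sim 1/\eps$ on a set of area $\sim\eps\delta$, which forces $\delta\ll\eps^{p-1}$. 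The $\partial_{22}$ term forces $\delta\gg\eps^{p/(2p-1)}$. These two requirements are incompatible for $p\ge 1+1/\sqrt2$.
\end{itemize}
Only the $W^{1,q}$ half of your Step 4 goes through.

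The missing idea is that no boundary correction is needed if you regularize only the part of $u^*$ that vanishes on the boundary. Split $u^*=\bar u+v$ with $\bar u=\frac{x_1}{\pi}\arctan(x_1/x_2)+\frac{x_1}{2}$ and $v=-\frac{x_2}{2\pi}\log\|x\|^2$.
\begin{itemize}
\item The $1$-homogeneous part $\bar u$ attains $\sigma(x_1)$ exactly and is Lipschitz. Its Hessian is homogeneous of degree $-1$, with $|D^2\bar u|\lesssim\|x\|^{-1}$, so $\bar u$ already lies in $W^{2,p}_{loc}$ for $p<2$.
\item Only $v$ fails to be Lipschitz. Because of the factor $x_2$, it can be regularized without changing the boundary values.
\end{itemize}
The paper replaces $\log\|x\|^2$ by $\log(\|x\|^2+\eps^2)$. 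The resulting $u_\eps$ is Lipschitz and equals $\sigma(x_1)$ on $\{x_2=0\}$. On $\Omega$ it satisfies $|\nabla u_\eps|\lesssim 1+|\log\|x\||$ and $|D^2u_\eps|\lesssim\|x\|^{-1}$ uniformly in $\eps$. Dominated convergence then gives both the $W^{1,q}$ and the $W^{2,p}$ convergence at once.
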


The function $u^*$ is visualized in Figure \ref{figure harmonic function}.

\begin{figure}
\includegraphics[width = .47\textwidth]{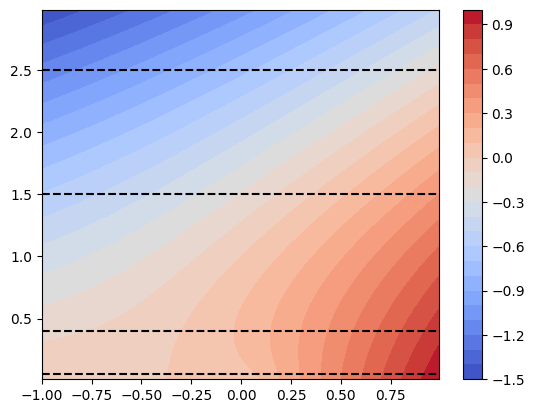}\hfill
\includegraphics[width = .49\textwidth]{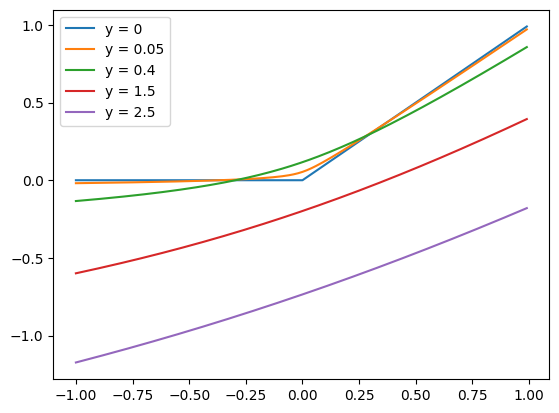}
\caption{\label{figure harmonic function}
We visualize the harmonic function described in Lemma \ref{lemma sobolev regularity} as a contour plot (left) and as a function of $x$ for various fixed values of $y$ (right). The lines along which we plot $u^*$ on the right are dashed in the contour plot on the left.\\
We note that $\partial_{x_1}u^*\to 1/2$ as $x_2\to \infty$, and indeed $u^*$ can be seen to converge to a linear function with slope $1/2$ as $x_2$ increases.
}
\end{figure}

\begin{proof}%[Proof of Lemma \ref{lemma sobolev regularity}]
{\bf Homogeneity and scaling.} 
The boundary condition $\sigma(x_1) = \frac1\lambda\,\sigma(\lambda x_1)$ is positively one-homogeneous, but the solution $u^*$ is not. If we rescale $u^*$, we obtain
\begin{align*}
\frac1\lambda \,u^*(\lambda x) &= \frac1\pi \left(\frac{\lambda x_1}\lambda \cdot \arctan\left(\frac {x_1}{x_2}\right) - \frac {\lambda x_2}{2\lambda }\,\log(\lambda^2\|x\|^2)\right) + \frac{\lambda x_1}{2\lambda}\\
	&= u^*(x) - x_2\,\frac{\log(\lambda)}\pi.
\end{align*}
The rescaled function is another harmonic function with boundary values $\sigma(x_1)$. In particular, we can use the homogeneity to obtain norm bounds easily:
\begin{align*}
\|u^*\|_{L^\infty(\Omega_R)} &= R\,\|u^*\|_{L^\infty(\Omega)} + R|\log R|\\
\|\nabla u^*\|_{L^q(\Omega_R)} &= \left(\int_{\Omega_R} \|\nabla u^*(x)\|^q \dx\right)^\frac1q
	= \left(\int_{\Omega_R} \left\|\nabla \left(R u^*\left(\frac xR\right) + x_2\log(1/R)\right)\right\|^q \dx\right)^\frac1q\\
	&\leq \left(R^2 \int_{\Omega_R} \|\nabla u^*\|^q(x/R)\,\frac1{R^2}\dx\right)^\frac1q + \big(R^2|\log(R)|\big)^\frac1q\\
	& = R^{2/q}\|\nabla u^*\|_{L^q(\Omega)} + R^{2/q}|\log R|^{1/q}\\
\|D^2u^*\|_{L^p(\Omega_R)} &= \left(\int_{\Omega_R}\left\|\frac1R\,D^2u^*\left(\frac xR\right)\right\|^p\dx\right)^\frac1p 
	\\&= R^{\frac{2-p}p}\left(\int_{\Omega_R} \left\|D^2u^*\left(\frac xR\right)\right\|^p \frac1{R^2}\dx\right)^\frac1p = R^{\frac{2-p}p} \|D^2u^*\|_{L^p(\Omega)}.
\end{align*}
In the following, we will focus on norm bounds and approximation on $\Omega = \Omega_1$, but the bounds could easily be generalized to different scales. The additive terms can be eliminated by choosing the solution $u_R = R\,u^*(x/R)$ instead of $u^*$ for different length scales.

{\bf First claim.} 
$u^*$ 
satisfies
\begin{equation}\label{eq boundary values attained}
\lim_{x_2\to 0}u^*(x) = \frac{x_1\cdot \lim_{z\to\infty}\arctan(x_1z)}\pi + \frac {x_1}2 = \frac{x_1\,\mathrm{sign}(x_1)+x_1}2 = \sigma(x_1).
\end{equation}
We compute its derivatives as
\begin{align*}
\pi\,\partial_{x_1}u^*(x) &= \arctan\left(\frac{x_1}{x_2}\right) + x_1\, \frac{1}{1+ (\frac{x_1}{x_2})^2}\,\frac1{x_2} - \frac {x_2}2\,\frac{2x_1}{\|x\|^2} + \frac\pi2
	%&= \arctan\left(\frac{x_1}{x_2}\right) + \frac{x_1x_2}{x_1^2+x_2^2}- \frac{x_1x_2}{x_1^2+x_2^2} + \frac12\\
	=  \arctan\left(\frac{x_1}{x_2}\right) + \frac\pi2\\
\pi\,\partial_{x_2}u^*(x) &= x_1\,\frac{1}{1+ (\frac{x_1}{x_2})^2}\,\left(-\frac{x_1}{x_2^2}\right) - \frac12\,\log(\|x\|^2) - \frac{x_2}2\,\frac{2x_2}{\|x\|^2}
	%&= \frac{-x_1^2}{x_1^2 + x_2^2} - \frac12\,\log\big(\|x\|^2\big) - \frac{x_2^2}{\|x\|^2}\\
	= -1 - \frac12\,\log\big(\|x\|^2\big)\\
\pi\,\partial_{x_1}^2 u^*(x) &= \frac{1}{1+ \big(\frac{x_1}{x_2}\big)^2}\,\frac1{x_2}
	= \frac{x_2}{\|x\|^2}\\
\pi\,\partial_{x_1}\partial_{x_2} u^*(x) &= -\frac12\,\frac{2x_1}{\|x\|^2} = \frac{-x_1}{\|x\|^2}\\ 
\pi\,\partial_{x_2}^2 u^*(x) &= \frac{-x_2}{\|x\|^2}.
\end{align*}
In particular
\[
\Delta u^* (x) = (\partial_{x_1}^2 + \partial_{x_2}^2)u^*(x) =  \frac1\pi \left(\frac{x_2}{\|x\|^2} +\frac{-x_2}{\|x\|^2}\right) =0\qquad\forall\ x\in \R^2_+.
\]

{\bf Second claim.}
Direct inspection shows that $u^*\in L^\infty_{loc}(\R^2_+)$ and $\|u^*\|_{L^\infty(\Omega_R)} \leq R(1 + \log(R)/\pi)$ for all $R\geq 1$. For all $q\in[1,\infty)$ and $p\in[1,2)$, we have
\begin{align*}
\|Du^*\|_{L^q(\Omega)} &\leq C\left(\int_{B_1(0)} \big(1+ \big|\log \|x\|\,\big|\big)^q\dx\right)^\frac1q = C \left(\int_0^1 r\big(1+|\log r|\big)^q\dr\right)^\frac1q < \infty\\
\|D^2u^*\|_{L^p(\Omega)} &\leq C\left(\int_{B_1(0)} \|x\|^{-p}\dx\right)^\frac1p = C\left(\int_0^1 r^{1-p} \dr\right)^\frac1p  \lesssim \left(\frac{1}{2-p}\right)^\frac1p< +\infty.
\end{align*}
More quantitatively, norm of the gradient can be estimated by
\begin{align*}
\left(\int_0^1 r(1+|\log r|)^q\dr\right)^\frac1q& \leq\left(\int_0^1 r\dr\right)^\frac1q + \left(\int_0^1 r|\log r|^q\dr\right)^\frac1q \leq 1 + q
\end{align*}
for large enough $q$ -- see Appendix \ref{appendix integrals} for details.

{\bf Third claim.} 
We first show that there exists no harmonic function $v\in W^{1,\infty}_\sigma(\Omega)$, i.e.\ no harmonic Lipschitz function with boundary values $\sigma(x_1)$. 
The function $u^*$ fails to be Lipschitz-continuous both at the origin and at infinity since
\[
\frac{u^*(0,x_2) - u^*(0,0)}{x_2} =  \frac12\,\log(x_2^2) = \log(x_2)\to \pm \infty \qquad\text{as }x_2\to \infty \text{ or }0^+ \text{ respectively.}
\]
Now, if such a function $v$ were to exist, we would necessarily have $v = u^*+h$ where $h:\Omega\to\R$ is a harmonic function with Dirichlet boundary values $h(x_1,0) \equiv 0$. Any such function $h$ extends to a harmonic function on $B_1(0)$ by odd reflection: $h(x_1,x_2) = -h(x_1, -x_2)$ for $x_2\leq 0$ (Schwarz reflection principle, see \cite[Problem 2.4]{gilbarg2015elliptic}). Consequently $h$ is infinitely smooth and in particular Lipschitz-continuous at the origin. 
Since $h$ is Lipschitz-continuous and $u^*$ is not, also $v$ must fail to be Lipschitz-continuous at the origin, contradicting our original assumption.

It remains to show that $u^*$ can be approximated in $W^{1,q}$ and $W^{2,p}$ by Lipschitz-functions with the correct boundary values. 
Modifying Step 1, we consider
\begin{equation}\label{eq ueps}
u_\eps(x) := \frac1\pi \left(x_1\cdot \arctan\left(\frac {x_1}{x_2}\right) - \frac{x_2}2\,\log(\|x\|^2+\eps^2)\right) + \frac{x_1}2.
\end{equation}
As above, we see that $u_\eps(x_1,0) = \sigma(x_1)$ for all $\eps>0$. Again, we compute the rescaled version
\begin{align*}
\frac1\lambda \,u_\eps(\lambda x) &= \frac1\pi\left(\frac{\lambda x_1}{\lambda} \,\arctan\left(\frac{\lambda x_1}{\lambda x_2}\right)- \frac{\lambda x_2}{2\lambda} \,\log\big(\lambda^2\|x\|^2 + \eps^2\big)\right) + \frac{\lambda x_1}{2\lambda}\\
	&= \frac1\pi\left( x_1\,\arctan\left(\frac{x_1}{x_2}\right) - \frac{x_2}2 \,\log\big(\|x\|^2 + \eps^2/\lambda^2\big) + \frac{\log(\lambda^2)}2\,x_2\right)+ \frac{x_1}2\\
	&= u_{\eps/\lambda}(x) + \frac{\log\lambda}\pi\,x_2
\end{align*}
The derivatives of $u_\eps$ are
\begin{align*}
\pi\,\partial_{x_1}u_\eps(x) &= \arctan\left(\frac{x_1}{x_2}\right) + \frac{x_1x_2}{x_1^2+x_2^2} - \frac{x_1x_2}{\|x\|^2 +\eps^2} + \frac\pi2\\
	%&&= \arctan\left(\frac{x_1}{x_2}\right) + \frac{\eps^2 \,x_1x_2}{\|x\|^2(\|x\|^2+\eps^2)} + \frac\pi2\\
\pi\,\partial_{x_2}u_\eps(x) &= \frac{-x_1^2}{x_1^2+x_2^2} - \frac12\,\log(\|x\|^2+\eps^2) - \frac{x_2^2}{\|x\|^2+\eps^2}
	%&&= -1 + \frac{\eps^2}{\|x\|^2(\|x\|^2+\eps^2)} - \log(\|x\|^2+\eps^2).
\end{align*}
As $2|x_1x_2| \leq x_1^2 +x_2^2$ and $|\log(\eps^2)|<+\infty$, the first derivatives of $u_\eps$ are bounded, i.e.\ $u_\eps$ is Lipschitz-continuous with Lipschitz-constant at most $C(1+|\log\eps|)$. The second derivatives of $u_\eps$ are
\begin{equation}\label{eq second derivatives}
\begin{split}
\pi\,\partial_{x_1}\partial_{x_1}u_\eps(x) &= \frac{1}{x_2}\,\frac1{1+(x_1/x_2)^2} + \frac{x_2}{x_1^2+x_2^2} - 2x_1\,\frac{x_1x_2}{\|x\|^4} - \frac{x_2}{\|x\|^2+\eps^2}+2x_1\,\frac{x_1x_2}{(\|x\|^2+\eps^2)^2}\\
	&= 2\left(\frac{x_1^2x_2}{(\|x\|^2 +\eps^2)^2} - \frac{x_1^2x_2}{\|x\|^4} \right) + 2\,\frac{x_2}{\|x\|^2}- \frac{x_2}{\|x\|^2+\eps^2}\\
\pi\,\partial_{x_1}\partial_{x_2}u_\eps(x) &= \frac{-2x_1}{\|x\|^2} +\frac{2x_1^3}{\|x\|^4} - \frac{x_1}{\|x\|^2+\eps^2} +\frac{2x_2^2x_1}{(\|x\|^2+\eps^2)^2}\\
\pi\,\partial_{x_2}\partial_{x_2}u_\eps(x) &= \frac{2x_1^2x_2}{\|x\|^4} - \frac{x_2}{\|x\|^2+\eps^2} - \frac{2x_2}{\|x\|^2+\eps^2} + \frac{2\,x_2^3}{(\|x\|^2+\eps^2)^2}.
\end{split}
\end{equation}
i.e.\ in particular 
\begin{equation}\label{eq laplacian ueps}
\pi \Delta u_\eps(x) = 2\,\frac{\|x\|^2\,x_2}{(\|x\|^2+\eps^2)^2} + \frac{x_2}{\|x\|^2}- 3\,\frac{x_2}{\|x\|^2+\eps^2}.
\end{equation}
Clearly $\Delta u_\eps\to 0 \equiv \Delta u$ pointwise and $|\Delta u_\eps(x)|\leq 6/\|x\| =: \Psi(x)$ for all $x\in \R^d$. Since $\Psi \in L^p(\Omega)$ for $p<2$, we find that $\Delta u_\eps \to 0$ in $L^p(\Omega)$ by the dominated convergence theorem. The fact that $u_\eps \in W^{2,p}$ with a uniform bound follows by the same argument.

{\bf Fourth claim.} We will show by contradiction that there exists no function $u\in H^2(\Omega)$ such that $u(x_1,0) = \sigma(x_1)$ for $x_1\in(-1,1)$. A fortiori, there exists no such $u\in W^{2,p}(\Omega)$ for any $p\geq 2$. 

Assume that $u\in H^2(\Omega)$ such that $u(x_1, 0) = \sigma(x_1)$. Since $\partial\Omega\in C^{0,1}$, we may extend $\partial_{x_1}u \in H^1(\Omega)$ to a function $v\in H_0^1(\R^2)$ by \cite[Satz 6.11]{dobrowolski2010angewandte} (see also \cite[Section 9.2]{MR2759829}). Notably, $v$ satisfies $v(x_1,0) = 1_{(0,\infty)}(x_1)$ in the sense of traces for $x_1\in(-1,1)$. Due to the jump singularity at the origin, we have
\begin{align*}
[v]_{H^{1/2}(-1,1)}^2 &= \int_\R\int_\R\frac{|v(x_1,0) - v(x_1',0)|^2}{|x_1-x_1'|^2}\dx_1\dx_1' \geq \int_{-1}^1\int_{-1}^1\frac{|1_{(0,\infty)}(x_1) - 1_{(0,\infty)}(x_1')|^2}{|x_1-x_1'|^2}\dx_1\dx_1'\\
	& = 2\int_{0}^1\int_{-1}^0 \frac1{|x_1-x_1'|^2}\dx_1'\dx_1 = 2\int_0^1 \int_0^1 \frac1{(t+s)^2}\dt\ds = 2\int_0^1\frac{1}t - \frac1{t+1}\dt = +\infty
\end{align*}
i.e.\ $v(x_1,0)\notin H^{1/2}(\partial \R^2_+)$. This contradicts the assumption that $v\in H^1(\R^2)$ by \cite[Proposition 14.40 and Theorem 15.20]{leoni2017first} and thus the assumption that $u\in H^2(\Omega)$ (alternatively, see \cite[Satz 9.40]{dobrowolski2010angewandte}).
\end{proof}

\begin{remark}\label{remark normal derivative}
We note that the gradient on the boundary is 
\[
\nabla u^*(x_1,x_2) = \big(1_{\{x_1>0\}}, \: -\pi^{-1}(1+ \log |x_1|)\big) \in L^p_{loc}(\partial \R^2_+)
\]
for all $p<\infty$. Tracing this through the following lemmata, we see that if $u$ is a harmonic function with Barron boundary data on a rectangular domain in $\R^2$, then $\partial_\nu u\in L^2(\partial \Omega)$.
\end{remark}

To us, the initial significance of Lemma \ref{lemma sobolev regularity} is that we cannot understand the Barron regularity of solutions to Poisson equations with Barron boundary values immediately from known function space embeddings. In dimension $d=2$, all function spaces which are known to embed into Barron space -- namely $W^{4,1}$ due to \cite[Corollary 1]{ongie2019function} and $H^s$ for $s>(d+2)/2$ due to \cite{barron_boundaries} -- also embed into $H^2$, i.e.\ they cannot match the boundary values. In fact, the harmonic function with ReLU boundary values is not Lipschitz-continuous, and thus not a Barron function. However, it is evident that the boundary values {\em can} be matched in Barron space.
%Thus, function space embeddings do not directly guarantee the existence of even a finite energy Barron function for a PINN functional, let alone one of near optimal energy. On the other hand, 
The proof of Lemma \ref{lemma sobolev regularity} gives us a road map for working in Barron spaces. We make the computations more quantitative in the sharper version below.

\begin{lemma}\label{lemma barron regularity}
Let  $u^*$ be
as in \eqref{eq harmonic function} and $u_\eps$ as in \eqref{eq ueps} in $\Omega$. Then there exist universal constants $C$ such that
\[
\|u_\eps\|_{\B(\Omega)} \leq C\,|\log\eps|,
\]
$ \|u_\eps-u^*\|_{L^\infty(\Omega)}\leq \eps/2$ and
\[
\|\nabla(u_\eps - u^*)\|_{L^{q}(\Omega)} \leq C\left(\frac1{q-1}+q\right)\,\eps^{2/q}, \qquad \|\nabla (u_\eps - u^*)\|_{L^q(B_1\cap \partial \R^2_+)} \leq Cq\,\eps^{1/q}
\]
for all $q\in(1,\infty)$ and
\[
\|D^2(u_\eps - u^*)\|_{L^{p}(\Omega)} \leq C\,(2-p)^{-1/p}\,\eps^{\frac{2-p}p}\qquad\forall\ p\in[1,2).
\]
\end{lemma}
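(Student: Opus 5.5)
The plan is to get every estimate other than the Barron norm from the explicit formula for the difference,
\[
u_\eps(x)-u^*(x) = -\frac{x_2}{2\pi}\,\log\Big(1+\frac{\eps^2}{\|x\|^2}\Big).
\]
The only genuinely new ingredient is the bound $\|u_\eps\|_{\B(\Omega)}\leq C|\log\eps|$. That step needs an explicit representation of $u_\eps$ as a superposition of ReLU neurons, and I expect it to be the main obstacle.

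\emph{Difference estimates.} Since $x_2\leq\|x\|$ and $\log(1+s)\leq s$, we get $|u_\eps-u^*|\leq \frac{1}{2\pi}\,\|x\|\min\{\eps^2/\|x\|^2,\,\log(1+\eps^2/\|x\|^2)\}$. Maximizing $r\log(1+\eps^2/r^2)$ over $r$ gives a bound $c\,\eps$ with $c<\pi$, which yields $\eps/2$. Next, subtracting the derivative formulas from the proof of Lemma~\ref{lemma sobolev regularity} gives
\[
|\nabla(u_\eps-u^*)|\lesssim \frac{\eps^2}{\|x\|^2+\eps^2}+\log\Big(1+\frac{\eps^2}{\|x\|^2}\Big),
\qquad
|D^2(u_\eps-u^*)|\lesssim \min\Big\{\frac1{\|x\|},\frac{\eps^2}{\|x\|^3}\Big\}.
\]
I then integrate these in polar coordinates on $B_1$, splitting at $r=\eps$. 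For $p<2$, the inner region $\{r<\eps\}$ contributes $\int_0^\eps r^{1-p}\dr\sim \eps^{2-p}/(2-p)$, and the outer region contributes $\eps^{2p}\int_\eps^1 r^{1-3p}\dr\lesssim \eps^{2-p}$, which gives the $W^{2,p}$ bound. For the gradient, after substituting $r=\eps s$ the interior $L^q$ norm becomes $\eps^{2/q}$ times $\big(\int_0^{1/\eps}s\,(\log(1+s^{-2})+(1+s^2)^{-1})^q\ds\big)^{1/q}$. Near $s=0$ the logarithm produces the factor $q$ (Appendix~\ref{appendix integrals}), and the tail $\int^\infty s^{1-2q}\ds$ produces the factor $1/(q-1)$. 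On the boundary $x_2=0$ the same bound, integrated in one variable, gives $\eps^{1/q}$ times a factor $q$.

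\emph{Barron norm, structural step.} I split $u_\eps=h+\ell_\eps$ with $h(x)=\frac1\pi x_1\arctan(x_1/x_2)+\frac{x_1}2$ and $\ell_\eps=-\frac{x_2}{2\pi}\log(\|x\|^2+\eps^2)$. The function $h$ is positively one-homogeneous, $h=r\varphi(\theta)$, and its gradient is bounded. Such a function equals $\int_{S^1}\sigma(\omega\cdot x)\,\rho(\omega)\,\d\omega$ plus a linear function, where $\rho$ is determined by the angular part $\varphi''+\varphi$. From the Hessian computed in Lemma~\ref{lemma sobolev regularity} this is a bounded density, so $\|h\|_\B\leq C$ independently of $\eps$.

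\emph{Barron norm, main step.} For $\ell_\eps$ I use the identity
\[
x_2\log(\|x\|^2+\eps^2)=x_2\log(\|x\|^2+1)-\int_\eps^1 2\,\Phi(x/t)\dt,\qquad \Phi(y)=\frac{y_2}{1+\|y\|^2}.
\]
The first term is smooth on a neighborhood of $\overline\Omega$, so it agrees there with a Barron function by the $H^s$ embedding recalled in Section~\ref{section barron review}. The claim then reduces to showing
\[
\|\Phi(\cdot/t)\|_{\B(B_1)}\lesssim \frac1t\qquad\text{for } t\in(\eps,1),
\]
after which Minkowski's inequality for the Bochner-type integral gives $\int_\eps^1\dt/t=|\log\eps|$. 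For a single scale $t$, rescaling a neuron representation of $\Phi$ on $B_{1/t}$ shows that this is equivalent to $\|\Phi\|_{\B(B_{R})}\lesssim R$ with $R=1/t$. This step is the delicate one, because $\Phi$ decays only like $1/|y|$ and has no finite global Barron norm. My first attempt would be to cut $\Phi$ off dyadically in $|y|$: the piece on $\{|y|\sim 2^j\}$ has size $2^{-j}$ and lives at length scale $2^j$. The spectral bound $\int|\xi|^2|\hat f|$, combined with the bias weighting, should give a Barron norm of order $2^{j}\cdot 2^{-j}\cdot 2^{j}$ per piece, and summing over $2^j\leq R$ gives $\lesssim R$. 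Should this bookkeeping fail to close, the fallback is to write $\Phi(x/t)$ directly as a ridge superposition via the Radon transform of its Laplacian, which decays like $|y|^{-3}$.
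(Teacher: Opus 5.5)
Your difference estimates are correct and are essentially the paper's Steps 2.2--2.5: the explicit formula for $u_\eps-u^*$, the pointwise bounds on $\nabla(u_\eps-u^*)$ and $D^2(u_\eps-u^*)$, and the polar integrals split at $r=\eps$. The gap is in the Barron-norm step for $\ell_\eps$.

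\textbf{The reduction to $\|\Phi\|_{\B(B_R)}\lesssim R$ is not an equivalence.} The identity and the reduction to $\|\Phi(\cdot/t)\|_{\B(B_1)}\lesssim 1/t$ are fine. The claimed equivalence with $\|\Phi\|_{\B(B_R)}\lesssim R$, $R=1/t$, is false. A representation $(a,w,b)$ of $\Phi$ on $B_R$ becomes $(a,w/t,b)$ for $\Phi(\cdot/t)$ on $B_1$, at cost $\E\big[|a|\sqrt{R^2\|w\|^2+b^2}\big]$. The inner weights are inflated by $R$ but the biases are not, so all you get is
\[
\|\Phi\|_{\B(B_R)}\leq\|\Phi(\cdot/t)\|_{\B(B_1)}\leq R\,\|\Phi\|_{\B(B_R)}.
\]
The loss is real in general. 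For a fixed bump $G$, $\|R\,G\|_{\B(B_R)}\lesssim R$, but $\|R\,G(\cdot/t)\|_{\B(B_1)}\gtrsim R^2$ because the Barron norm dominates the Lipschitz constant. Proving $\|\Phi\|_{\B(B_R)}\lesssim R$ therefore only gives $\|\Phi(\cdot/t)\|_{\B(B_1)}\lesssim t^{-2}$, hence $\|\ell_\eps\|_{\B(\Omega)}\lesssim\eps^{-1}$ rather than $|\log\eps|$.

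Your dyadic bookkeeping also points the wrong way. In the $x$-frame the shell $\{|x|\sim t2^j\}$ equals $2^{-j}G_j(x/(t2^j))$, with profiles $G_j(z)=\frac{z_2}{4^{-j}+|z|^2}\chi(z)$. These are uniformly smooth and supported in a fixed annulus, so exact rescaling gives a cost $\lesssim t^{-1}4^{-j}$. The core $|x|\lesssim t$, where the neurons are steep, dominates, not the outer shells. Fourier-type bounds over the whole domain do not see this localization.

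\textbf{How to close the gap.} Your premise that $\Phi$ has no finite global Barron norm is also wrong, and correcting it closes the gap in one line; your fallback is the right idea. Restricting to the $y_2$-axis and solving the resulting Abel equation gives
\[
\Phi(y)=\int_{S^1}\omega_2\,\phi(\omega\cdot y)\,\d\omega,\qquad \phi(s)=\frac1{2\pi}\left(\frac{s}{1+s^2}+\frac{\mathrm{arsinh}\,s}{(1+s^2)^{3/2}}\right).
\]
Here $\phi$ is smooth with $\phi''=O(|s|^{-3})$ at infinity, so $\phi\in\B(\R)$ and $\Phi\in\B(\R^2)$. This is consistent with $\Delta\Phi=-8y_2(1+\|y\|^2)^{-3}$, which decays like $|y|^{-5}$ (not $|y|^{-3}$) because the leading term $y_2/\|y\|^2$ is harmonic. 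Then $\|\Phi(\cdot/t)\|_{\B}\leq t^{-1}\|\Phi\|_{\B}$ for $t\leq 1$, and your integral gives $\|\ell_\eps\|_{\B(\Omega)}\lesssim 1+|\log\eps|$.

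Repaired this way, your argument is a more explicit alternative to the paper's. The paper instead bounds $\|v_\eps\|_{H^2}\lesssim|\log\eps|^{1/2}$ and $\|v_\eps\|_{H^3}\lesssim\eps^{-1}$, then interpolates through the embedding $H^{2+s}\subseteq\B$ with constant $s^{-1/2}$ and $s=1/|\log\eps|$.

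\textbf{A smaller point on $h$.} Since $\sigma(z)=\frac{|z|+z}2$, any superposition $\int\sigma(\omega\cdot x)\rho(\omega)\,\d\omega$ is an even function plus a linear one. Your representation of $h$ therefore requires choosing its extension to $\{x_2<0\}$ accordingly, for example $h(x)=h(-x)+x_1$, which is continuous across $x_2=0$. The paper sidesteps this by writing $h(x)=x_2\,\Phi_1(x_1/x_2)$ with a one-dimensional Barron profile $\Phi_1$.
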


Thus, while $u^*$ is not a Barron function itself, it can be approximated spectacularly well by functions of `low' Barron norm. We note that $u_\eps$ is {\em not} a Barron function (or even Lipschitz continuous) as it grows superlinearly as $x_2\to \infty$. However, as Lemma \ref{lemma barron regularity} asserts, there exists a Barron function $u_\eps'$ such that $u_\eps'\equiv u_\eps$ on $\Omega$ and the claims hold technically for $u_\eps'$. In the following, we will not distinguish between $u_\eps$ and $u_\eps'$ in our notation.

As a consequence, we see that $\inf_{u\in\B_\sigma} \int_\Omega |\Delta u|\dx =0$, but a minimizer $u\in \B_\sigma := \{u\in \B: u(x_1,0) = \sigma(x_1)\}$ does not exist. The same proof shows that for $q=1$, the estimate $\|\nabla (u_\eps-u^*)\|_{L^1(\Omega)} \leq C\eps^2|\log\eps|$ holds.

\begin{proof}
We decompose $u_\eps = \bar u + v_\eps$ where $v_\eps(x) = \frac{x_2}{2\pi}\,\log(\|x\|^2+\eps^2)$ is the logarithmic correction term.

{\bf Step 1: Boundary values.} For a Barron function $\Phi:\R\to\R$ with representation
\[
\Phi(z) = \E_{(a,w,b)\sim\pi}\big[a\sigma(wz +b)\big]
\]
we consider
\[
u_\Phi:\R^2_+\to\R, \qquad \bar u(x) = \E_{(a,w,b)\sim\pi}\big[a\sigma(wx_1 +bx_2)\big].
\]
By the positive homogeneity of the ReLU activation function $\sigma(z) = \max\{z,0\}$, we have
\[
 u_\Phi(x_1,x_2) =  \E_{(a,w,b)\sim\pi}\big[a\sigma(wx_1+bx_2)\big] =x_2\,\E_{(a,w,b)\sim\pi}\left[a\sigma\left(w\frac{x_1}{x_2}+b\right) \right] = x_2 \,\Phi\left(\frac{x_1}{x_2}\right)
\]
for $x_2>0$. Note that $\Phi(z) = \frac1\pi\,z\,\arctan(z)+\frac{z}2$ is a Barron function on the entire real line since $\Phi''(z) = \frac{2}{\pi\,(1+z^2)^2}$ has finite first moments \cite[Example 4.1]{barron_new}. In this case, we note that
\[
\bar u(x):= u_\Phi(x) = x_2\left(\frac1\pi\,\frac{x_1}{x_2}\,\arctan\left(\frac {x_1}{x_2}\right)+\frac{x_1}{2x_2}\right)  = \frac{x_1}\pi\,\arctan\left(\frac{x_1}{x_2}\right) + \frac{x_1}2
\]
as desired. Since Barron functions are continuous, the equality also holds for $x_2=0$.

{\bf Step 2: Logarithmic corrector.} The second term 
\[
v_\eps(x) := \frac{x_2}{2\pi}\,\log\big(\|x\|^2 +\eps^2\big)
\]
in the approximation $u_\eps = \bar u + v_\eps$ of $u^*$ given in \eqref{eq ueps} is infinitely smooth on $\R^2$. While $v_\eps$ grows to $\infty$ superlinearly at $\infty$ and cannot be a Barron function (or even a Lipschitz function), if we may multiply by an infinitely smooth and compactly supported cut-off function $\eta$ such that $\eta \equiv 1$ on $B_1(0)$, we can guarantee that there exists a Barron function $\tilde v_\eps :\R^d\to\R$ such that $\tilde v_\eps \equiv v_\eps$ on $B_1(0)$. We use the quantitative estimate
\begin{align*}
\|\tilde v_\eps\|_\B &\leq C s^{-1/2} \|\tilde v_\eps\|_{H^{2+s}(\R^d)} \leq C s^{-1/2}\, \|\tilde v_\eps\|_{H^{2}(\R^d)}^{1-s} \|\tilde v_\eps\|_{H^3(\R^d)}^{s} \leq Cs^{-1/2}\,\|v_\eps\|_{H^2(B_2(0))}^{1-s}\|v_\eps\|_{H^3(B_2(0))}^{s}
\end{align*}
for $s\in(0,1)$. For details, see Lemmas \ref{lemma sobolev embedding} and \ref{lemma sobolev interpolation} in the Appendix. The constant $C$ may change value from inequality to inequality, but does not depend on $s$. Thus, to obtain estimates, we may work with $v_\eps$ directly and choose $s$ to obtain a tight bound depending on $\eps$.

{\bf Step 2.1: Barron norm bound.} Considering only the terms corresponding to $v_\eps$ in \eqref{eq second derivatives}, we have
\begin{align*}
\pi\,\partial_{x_1}\partial_{x_1}v_\eps(x) &= 2\frac{x_1^2x_2}{(\|x\|^2 +\eps^2)^2} - \frac{x_2}{\|x\|^2+\eps^2}\\
\pi\,\partial_{x_1}\partial_{x_2}v_\eps(x) &=  - \frac{x_1}{\|x\|^2+\eps^2} +\frac{2x_2^2x_1}{(\|x\|^2+\eps^2)^2}\\
\pi\,\partial_{x_2}\partial_{x_2}v_\eps(x) &=  -3\frac{x_2}{\|x\|^2+\eps^2} + \frac{2\,x_2^3}{(\|x\|^2+\eps^2)^2},
\end{align*}
We easily compute the $H^2$-bound
\begin{align*}
\int_\Omega \|D^2v_\eps\|^2\dx &\leq C \int_0^1 \left\{\left(\frac{r^3}{(r^2+\eps^2)^2}\right)^2 + \left(\frac{r}{r^2+\eps^2}\right)^2\right\}r\dr\\
	&\leq C\left(\int_0^\eps \frac{r^7}{\eps^8} + \frac{r^3}{\eps^4}\dr + \int_\eps^1 \frac1r\dr\right) \leq C\big(1+|\log\eps|),
\end{align*}
i.e.\ $\|v_\eps\|_{H^2(\Omega)} \leq C\sqrt{1+|\log\eps|} \leq C\,|\log\eps|^{1/2}$. To bound the $H^3$-norm of $v_\eps$, we choose representative derivatives of individual terms -- all other terms follow the same pattern. A simpler but less precise argument is the appeal to homogeneity: For $\eps=0$, the tensor of $k-th$ derivatives $D^kv_0$ is a positively homogeneous function of degree $1-k$. The norm blow up $\int_\Omega \|D^kv_\eps\|^2\dx$ is logarithmic for $k=2$, just as the blow-up of the integral $\int_\eps^1 r^{1+2(1-k)}\dr = \int_\eps^1 r^{3-2k}\dr$ would suggest. The same heuristic yields a blow-up $\int_\Omega \|D^3v_\eps\|^2\dx \sim \int_\eps^1\frac1{r^3}\dr \sim \eps^{-2}$.
\begin{align*}
\nabla\left( \frac{x_2}{\|x\|^2 +\eps^2}\right) &= -\frac{2x_2}{(\|x\|^2+\eps^2)^2} \,\big(x_1, x_2\big) + \frac1{\|x\|^2+\eps^2} \,(0,1)\\
\nabla \left(\frac{x_1^2x_2}{(\|x\|^2+\eps^2)^2}\right) &= - \frac{4\,x_1^2x_2}{(\|x\|^2+\eps^2)^3}\,(x_1,x_2) + \frac{1}{(\|x\|^2+\eps^2)^2}\,(2x_1x_2, x_1^2).
\end{align*}
As previously, we observe that
\begin{align*}
\int_\Omega \|D^3v_\eps\|^2\dx &\leq C\int_0^1 \left\{\left(\frac{r^2}{(r^2+\eps^2)^2}\right)^2 + \left(\frac{r^4}{(r^2+\eps^2)^3}\right)^2\right\}r\dr\\
	&\leq C\left(\int_0^\eps \frac{r^5}{\eps^8} + \frac{r^9}{\eps^{12}}\dr +\int_\eps^1 \frac{1}{r^3}\dr \right) \leq C\left(1+ \eps^{-2}\right)
\end{align*}
and thus that $\|v_\eps\|_{H^3} \leq C\,\eps^{-1}$.
Choosing $s = s_\eps=1/ |\log\eps|$ in the interpolation between $H^2$ and $H^3$ implies the estimate
\[
\|v_\eps\|_\B \leq C\,s_\eps^{-1/2} |\log\eps|^{(1-s_\eps)/2} \eps^{-s_\eps} \leq C\,|\log\eps|^{1/2}\,|\log\eps|^{1/2} \,\exp\big(-\log\eps / |\log\eps|) = C\,|\log\eps|.
\]

{\bf Step 2.2: $L^\infty$-estimate.}
 We note that 
\[
\|u_\eps - u^*\|_{L^\infty(\Omega)} = \max_{x\in\overline \Omega} \left|\frac {x_2}2 \,\log\left(1+ \frac{\eps^2}{\|x\|^2}\right) \right|  = \max_{x_2\in[0,1]} \frac {x_2}2 \,\log\left(1+ \frac{\eps^2}{x_2^2}\right).
\]
The maximum is achieved either at $x_2=1$ with an objective value of $\log(1+\eps^2)/2 \leq \eps^2/2$ or at $x_2$ satisfying
\begin{align*}
0 &=\frac{d}{dx_2} \left(x_2\log\left(1+ \frac{\eps^2}{x_2^2}\right)\right) = \log(1+\eps^2/x_2^2) + \frac{x_2}{1+\eps^2/x_2^2}\cdot \left(-2\,\frac{\eps^2}{x_2^3}\right) \\
	&= \log\left(\frac{x_2^2+\eps^2}{x_2^2}\right) -2 \frac{\eps^2}{x_2^2+\eps^2}.
\end{align*}
This means that $z:= x_2^2/(x_2^2+\eps^2)$ solves $\log(1/z) = 2(1-z)$. The solution of this equation is $z^*\approx 0.2$ and we can solve 
\[
1- z^* = \frac{\eps^2}{x_2^2+\eps^2} \quad\Ra\quad (1-z^*)(x_2^2+\eps^2) = \eps^2 \quad \Ra \quad x_2 = \sqrt{\frac{z^*}{1-z^*}}\,\eps.
\]
It follows that
\[
\|u_\eps - u^*\|_{L^\infty(\Omega)} = \frac12\,\log(1/z^*) \,\sqrt{\frac{z^*}{1-z^*}}\eps\: \leq\, \frac{\eps}2.
\]

{\bf Step 2.3: $W^{1,q}$-estimate.}
Observe that 
\[
\nabla u_\eps - \nabla u^* = \frac1\pi \left(\frac{x_1x_2}{\|x\|^2} - \frac{x_1x_2}{\|x\|^2+\eps^2}, \: \frac12\left(\log(\|x\|^2) - \log(\|x\|^2+\eps^2)\right) + \frac{x_2^2}{\|x\|^2} - \frac{x_2^2}{\|x\|^2+\eps^2}\right).
\]
We compute
\begin{align}\notag \label{eq w1q estimate 2}
\int_\Omega \big|\log(\|x\|^2+&\eps^2) - \log(\|x\|^2)\big|^q\dx =\pi \int_0^1 \log^q\left(1+ \frac{\eps^2}{r^2}\right) r\dr\\
& = \pi\eps^2 \int_0^1 \log^q\left(1+ \left(\frac \eps r\right)^2\right)\,\left(\frac r\eps\right)^3\,\frac{\eps}{r^2}\dr = \pi\eps^2 \int_\eps^\infty \frac{\log^q(1+z^2)}{z^3}\dz.
\end{align}
Recall that $q>1$. The integral can be estimated as
\begin{align*}
\int_\eps^\infty \frac{\log^q(1+z^2)}{z^3}\dz 
%	&= \int_\eps^1 \frac{\log^q(1+z^2)}{z^3}\dz + \int_1^\infty \frac{\log^q(1+z^2)}{z^3}\dz\\
%	&\leq \int_\eps^1 z^{2q-3}\dz + \int_1^\infty\frac{\log^q(2z^2)}{4z^4}\,4z\dz\\
%	&\leq \frac{1}{2(q-1)} + \int_2^\infty\frac{\log^q(\xi)}{\xi^2}\d\xi\\
%	&\leq \frac{1}{2(q-1)} + \int_0^\infty \frac{\log^q(e^t)}{e^{2t}}e^t\dt  \\
%	&= \frac{1}{2(q-1)} + \int_0^\infty t^qe^{-t}\dt\\
	&= \frac1{2(q-1)} + \Gamma(q+1)
\end{align*}
where $\Gamma$ denotes the $\Gamma$-function -- see Lemma \ref{lemma integral estimates} for details. Additionally, if $i=1, j=2$ or $i=j=2$, then
\begin{align}\notag\label{eq w1q estimate}
\int_\Omega \bigg|\frac{x_ix_j}{\|x\|^2+\eps^2} &- \frac{x_ix_j}{\|x\|^2}\bigg|^q\dx \leq \pi \int_0^1r^{2q}\left|\frac1{r^2}- \frac1{r^2+\eps^2}\right|^q\,r\dr
	= \pi \int_0^1r^{2q}\left(\frac{\eps^2}{r^2(r^2+\eps^2)}\right)^qr\dr\\
	 &= \pi\,\eps^{2q}\int_0^1 (r^2+\eps^2)^{-q}r\dr = \frac{\pi\,\eps^{2q}}{2(1-q)} \int_0^1\frac{d}{dr} (r^2+\eps^2)^{1-q}\dr \leq  \frac{\pi \,\eps^{2q+2-2q}}{2(q-1)},
\end{align}
We combine the two estimates to obtain
\[
\|\nabla u_\eps - \nabla u\|_{L^q} \leq C\left\{\left(\frac1{q-1} + \Gamma(q+1)\right)^{1/q} \right\}\,\eps^{2/q}.
\]
By Stirling's approximation, we see that $\Gamma(q+1)^{1/q} = q/e + o(q)$ as $q\to\infty$, justifying the form of the estimate given in the theorem statement. For $q=1$, there is an additional logarithmic blow-up in the first integral, which for $q>1$ leads to the $(q-1)^{-1}$ contribution. 

{\bf Step 2.4: Gradient convergence on the boundary.} On the boundary, we have
\[
\nabla u_\eps -\nabla u^* = \left( 0, \frac{\log\big(\|x\|^2\big) - \log\big(\|x\|^2+\eps^2\big)}2\right),
\]
so
\begin{align*}
\int_{B_1(0)\cap \partial \R^2_+} \|\nabla u_\eps - \nabla u^*\|^q\dx_1 &= 2^{1-q} \int_0^1\log^q\left(1 + \frac{\eps^2}{r^2}\right)\dr.
\end{align*}
The expression strongly resembles the integral estimated in Step 2.3, but it lacks the factor $r$ of the polar coordinate functional determinant in two dimensions. We see that
\begin{align*}
\int_{B_1(0)\cap \partial \R^2_+} \|\nabla u_\eps - \nabla u^*\|^q\dx_1 &=  2^{1-q} \int_0^1\log^q\left(1 + \frac{\eps^2}{r^2}\right)\dr\\
	&= 2^{1-q}\eps \int_0^1\log^q\left(1 + \left(\frac{\eps}{r}\right)^2\right) \left(\frac r\eps\right)^2 \frac{\eps}{r^2}\dr\\
	&=2^{1-q}\eps \int_\eps^\infty \frac{\log^q(1+z^2)}{z^2}\dz\\
%	&\leq 2^{-q}\eps \left(\int_\eps^1 z^{2q-2}\dz + \int_1^\infty \frac{\log^q(1+z^2)}{z^3}(2z)\dz \right)\\
%	&\leq 2^{-q}\eps \left(\frac1{2q-1} + \int_0^\infty t^q\,e^{-3/2 t+t}\dt\right)\\
%	&\leq \int_0^2 \frac{\log^q(1+z^2)}{z^3}(2z)\dz\\
%	&\leq 2^{-q}\eps \,\int_0^\infty t^q\,e^{-3/2 t+t}\dt\\
	&= 2^{-q}\left( 1+ 2^{q+1}\,\Gamma(q+1)\,\right)\eps
\end{align*}
as in Appendix \ref{appendix integrals}.

{\bf Step 2.5: $W^{2,p}$-estimate.} We study  
\begin{align*}
\pi \partial_{x_1}\partial_{x_1}(u^*-u_\eps) &= 2\left(\frac{x_1^2x_2}{(\|x\|^2+\eps^2)^2}-\frac{x_1^2x_2}{\|x\|^4}\right) + \frac{x_2}{\|x\|^2+\eps^2}-  \frac{x_2}{\|x\|^2}.
\end{align*}
as a representative example for the second derivatives. All other derivatives can be studied by similar means. Here
\begin{align*}
\int_\Omega \left| \frac{x_1^2x_2}{(\|x\|^2+\eps^2)^2}-\frac{x_1^2x_2}{\|x\|^4} \right|^p
	&\dx
	\leq \int_\Omega \|x\|^{3p}\left|\frac{\|x\|^4 - (\|x\|^2+\eps^2)^2}{\|x\|^4(\|x\|^2+\eps^2)^2}\right|^p\dx\\
	&= \frac\pi2 \int_0^1 r^{3p}\left|\frac{\eps^4 + 2 r^2\eps^2}{r^4(r^2+\eps^2)^2}\right|^pr\dr\\
	&\leq 2^p\pi \int_0^1\left(\frac{\eps^4}{r(r^2+\eps^2)^2}\right)^pr + 2^p \left(\frac{\eps^2r}{(r^2+\eps^2)^2}\right)^pr\dr\\
	&\leq 4^p\pi \int_0^\eps r^{1-p} + \eps^{-2p}r^{p+1}\dr +4^p\pi \int_\eps^1\eps^{4p} r^{1-5p}+ \eps^{2p}r^{1-3p}\dr\\
	&\leq 4^p\pi\left(\frac1{2-p}\eps^{2-p}+ \frac1{p+2}\eps^{-2p+p+2} + \frac1{5p-2}\eps^{4p+2-5p} + \frac1{3p-2}\eps^{2p+2-3p}\right)\\
	&\leq 4^p\frac{C}{2-p}\,\eps^{2-p}\\
\int_\Omega \left|\frac{x_2}{\|x\|^2+\eps^2}- \frac{x_2}{\|x\|^2}\right|^p
	&\dx 
	\leq \frac\pi2\int_0^1 \left(\frac{\eps^2}{r(r^2+\eps^2)}\right)^pr\dr \leq 2^p\pi \left(\int_0^\eps r^{1-p}\dr + \int_\eps^1 \eps^{2p}\,r^{1-3p}\dr\right)\\
 & =  \frac{2^p\,C}{2-p}\eps^{2-p}.
\end{align*}
Overall, we see that for $p<2$
\[
\left(\int_\Omega \|D^2u_\eps - D^2u^*\|^p\dx\right)^\frac1p \leq \frac{C}{(2-p)^{1/p}}\,\eps^\frac{2-p}p.\qedhere
\]
\end{proof}

A few observations are in order.

\begin{remark}[Convergence in $W^{1,\infty}$ and $W^{1,1}$.]
The constants in the $W^{1,q}$ convergence estimate blow up both as $q\to 1$ and $q\to\infty$. Since $u^*\notin W^{1,\infty}$, we cannot expect the gradients to remain bounded in $L^\infty$, meaning that the constants {\em must} blow up. For $q=1$, the convergence is slower than naively expected by a logarithmic factor, as can easily be seen by a modification of Step 2.3:
\[
\eps^2 \int_0^1 \frac{r}{r^2+\eps^2}\dr \leq \eps^2\left(\int_0^\eps \frac{r}{\eps^2}\dr + \int_\eps^1 \frac1r\dr\right) = \eps^2\left(\frac12+|\log\eps|\right).
\]
\end{remark}

\begin{remark}[Higher dimensions]
The same construction can be applied in half-spaces of arbitrary dimension. Given a half-space $H= \{x\in\R^d : x_d>0\}$ and a boundary condition $u(x_1, \dots, x_{d-1},0) = \sigma(w^Tx+b)$, we may simply choose an approximation $\hat u_\eps(x) = \|\hat w\| u_\eps(\hat w/\|\hat w\|\cdot x + b/\|\hat w\|, x_d)$ where $\hat w = (w_1, \dots, w_{d-1},0)$. 
\end{remark}

Equation \eqref{eq laplacian ueps} shows that $\Delta u_\eps$ contains a term of the form $x_2/\|x\|$, so $u_\eps = \bar u+ v_\eps \notin H^2(\Omega)$ (but $v_\eps\in H^2(\Omega)$ as seen above).
In the following, we explore an alternative approximation of $u^*$ which exactly satisfies the Laplace equation, but only approximately matches the boundary values.

\begin{lemma}\label{lemma h2 approximation}
 The function $\tilde u_\eps(x) = u^*(x_1, x_2+\eps)$ satisfies the norm growth bounds
\[
\|D^2 \tilde u_\eps \|_{L^2(\Omega)} \lesssim \sqrt{|\log\eps|}, \qquad \|\tilde u_\eps\|_{\B(\Omega)} \lesssim |\log\eps|
\]
and $\Delta \tilde u_\eps \equiv 0$. Furthermore, the closeness estimates
\begin{align*}
\|\tilde u_\eps-u^*\|_{L^q(B_1(0)\cap \partial \R^2_+)} &\lesssim q\eps &&\forall\ q \in [1,\infty)\\
\|\tilde u_\eps - u^*\|_{L^\infty(B_1(0)\cap \partial \R^2_+)} &\lesssim \eps|\log\eps|\\
\|\nabla (\tilde u_\eps - u^*)\|_{L^1(B_1(0)\cap \partial \R^2_+)} &\lesssim \eps|\log\eps|\\
\|\nabla (\tilde u_\eps - u^*)\|_{L^q(B_1(0)\cap \partial \R^2_+)} &\lesssim \left(q + \frac1{q-1}\right)\eps^{1/q}  &&\forall\ q \in (1,\infty)
\end{align*}
on the boundary and
\begin{align*}
\|\tilde u_\eps - u^*\|_{L^q(\Omega)} &\lesssim q\eps\\
\|\tilde u_\eps - u^*\|_{L^\infty(\Omega)} &\lesssim \eps|\log\eps|\\
\|\nabla (\tilde u_\eps - u^*)\|_{L^q(\Omega)} &\lesssim \frac1{2-q}\,\eps && \forall\ q\in [1,2)\\
\|\nabla (\tilde u_\eps - u^*)\|_{L^2(\Omega)} &\lesssim \eps\,\sqrt{|\log\eps|} \\
\|\nabla (\tilde u_\eps - u^*)\|_{L^q(\Omega)} &\lesssim \left(q + \frac1{q-2}\right)\eps^{2/q} &&\forall\ q\in(2,\infty)\\
\|D^2(\tilde u_\eps - u^*)\|_{L^p(\Omega)} & \lesssim(2-p)^{-1/p}\,\eps^\frac{2-p}p && \forall\ p \in [1,2)
\end{align*}
in the interior hold for $\tilde u_\eps$.
\end{lemma}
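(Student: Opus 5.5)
The idea is to exploit the explicit formulas from Lemma \ref{lemma sobolev regularity}: every derivative of $\tilde u_\eps$ is a derivative of $u^*$ evaluated at the shifted point $(x_1,x_2+\eps)$. We write $\|x+\eps e_2\|$ for the distance of $x$ to the point $(0,-\eps)$. For $x\in\overline\Omega$ (so $x_2\geq 0$) this distance is bounded below by $\max\{\eps,\|x\|\}$. Harmonicity is immediate, since translation commutes with $\Delta$ and the shifted domain $\{x_2>-\eps\}$ contains $\overline\Omega$. The $H^2$ bound follows from $|D^2\tilde u_\eps(x)|\lesssim \|x+\eps e_2\|^{-1}$. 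Its square, integrated in polar coordinates around $(0,-\eps)$ over a ball of radius $2$ with the disc of radius $\eps$ excluded, is $\lesssim \int_\eps^2 r^{-1}\dr\lesssim|\log\eps|$.

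For the Barron bound we reuse the decomposition of Lemma \ref{lemma barron regularity}. Write $\tilde u_\eps=\bar u(x_1,x_2+\eps)+v_0(x_1,x_2+\eps)$, where $v_0(x)=-\frac{x_2}{2\pi}\log\|x\|^2$.
\begin{itemize}
\item The first term is a Barron function with norm $\lesssim 1$. In the representation $\bar u(x)=\E[a\sigma(wx_1+bx_2)]$ from Step 1, shifting $x_2\mapsto x_2+\eps$ only changes the bias to $b\eps$, which costs at most a constant factor in the norm.
\item The second term is smooth on a neighbourhood of $\overline{B_1}\cap\{x_2\geq0\}$. Cutting it off and using the $H^{2+s}$ embedding with interpolation exactly as in Step 2.1 of Lemma \ref{lemma barron regularity}, it suffices to control its $H^2$ and $H^3$ norms.
\end{itemize}
The same homogeneity computation as in Step 2.1 gives $\|v_0(\cdot+\eps e_2)\|_{H^2(B_2\cap\{x_2>-\eps/2\})}^2\lesssim|\log\eps|$ and an $H^3$ norm $\lesssim\eps^{-1}$. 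The cutoff region must stay a distance $\sim\eps$ from the singularity. Choosing $s=1/|\log\eps|$ then yields $\|\tilde u_\eps\|_\B\lesssim|\log\eps|$.

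The closeness estimates all come from the mean value theorem along the vertical segment $\{(x_1,x_2+t)\}_{0\le t\le\eps}$:
\[
|D^k(\tilde u_\eps-u^*)(x)|\le \eps\sup_{0\le t\le\eps}|D^{k+1}u^*(x_1,x_2+t)|\lesssim \eps\,\|x\|^{-k}\quad (k=0,1 \text{ for } \|x\|\ge 2\eps),
\]
with the convention that $D^0 u^*$ is bounded by $1+|\log\|x\||$. For $\|x\|\le2\eps$ we bound each difference by the sum of the two terms.

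For $k=0$, using $|\nabla u^*|\lesssim 1+|\log\|x\||$, the $L^\infty$ bound is $\eps|\log\eps|$. The $L^q$ bound reduces to $\eps\,\|1+|\log\|x\|\|\|_{L^q}\lesssim q\eps$ on both $\Omega$ and the boundary, by the integrals from Lemma \ref{lemma sobolev regularity} and Appendix \ref{appendix integrals}.

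For the gradient we get $\eps/\|x\|$ outside the disc of radius $2\eps$ and a bound $\lesssim 1+|\log\|x\||$ inside it. In two dimensions,
\[
\int_{2\eps}^1 (\eps/r)^q\, r\dr
\]
gives $\eps^q/(2-q)$ for $q<2$, $\eps^2|\log\eps|$ for $q=2$, and $\eps^2/(q-2)$ for $q>2$. The inner disc contributes $\lesssim \eps^2(1+|\log\eps|)^q$. Since $(1+|\log\eps|)\eps^{2/q}$ exceeds $\eps^{2/q}$ only by a logarithm, I expect the $q^q$-type constant from Appendix \ref{appendix integrals} to absorb this after taking $q$-th roots, i.e.\ $\lesssim q\,\eps^{2/q}$. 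The one-dimensional boundary integrals work the same way with the weight $r$ dropped, giving the $\eps|\log\eps|$ and $(q+\frac1{q-1})\eps^{1/q}$ rates.

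For the second derivatives we use $\eps/\|x\|^2$ outside the disc, integrated in $L^p$, together with the direct bound $\|x\|^{-1}$ inside. This gives
\[
\int_{2\eps}^1 \eps^p r^{1-2p}\dr+\int_0^{2\eps} r^{1-p}\dr\lesssim \eps^{2-p}/(2-p),
\]
which reproduces the rate of Lemma \ref{lemma barron regularity}. Where $p$ is near $1$, the first integral is of order $\eps^{p}\cdot\eps^{2-2p}$ and is still fine.

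The main obstacle is the Barron norm bound. One has to check that the shifted logarithmic corrector really has $H^3$ norm $\lesssim\eps^{-1}$ uniformly after cutoff, and that the shift of the ReLU part preserves the norm. Beyond that, the only delicacy is the inner-disc contributions near the logarithmic singularity at the origin, where $u^*$ itself (not the shifted function) is singular.
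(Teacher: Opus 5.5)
Your route differs from the paper's. You control all differences by the mean value theorem along the segment from $x$ to $x+\eps e_2$, using the homogeneity bounds $|D^ku^*(y)|\lesssim\|y\|^{1-k}$ (up to a logarithm for $k=1$), and you bound the Barron norm by Sobolev interpolation. The paper instead works with explicit difference formulas and a one-dimensional decomposition of the logarithmic term.

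Several of your steps go through as described: harmonicity, the $H^2$ bound, the $L^q$ and $L^\infty$ closeness of values, the interior gradient bound for $q<2$, and the boundary gradient bound for $q=1$.

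\textbf{Main gap: gradients near the origin.} The gradient estimates for $q\ge2$ in the interior and for $q>1$ on the boundary fail as written. On $\{\|x\|<2\eps\}$ you bound $\nabla(\tilde u_\eps-u^*)$ by $|\nabla\tilde u_\eps|+|\nabla u^*|$. Both terms are of size $|\log\eps|$ there; for example $\pi\,\partial_{x_2}\tilde u_\eps(x)=-1-\frac12\log\big(x_1^2+(x_2+\eps)^2\big)$. So your bound contributes $\gtrsim\eps^2|\log\eps|^q$ from the disc, and you end up with:
\begin{itemize}
\item $\eps^{2/q}|\log\eps|$ in the interior, e.g.\ $\eps|\log\eps|$ instead of $\eps\sqrt{|\log\eps|}$ when $q=2$;
\item $\eps^{1/q}|\log\eps|$ on the boundary.
\end{itemize}
This extra factor $|\log\eps|$ cannot be absorbed into a $q^q$-type constant, because the implied constants must be independent of $\eps$.

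What is missing is the cancellation of the $\log\eps$ parts of the two gradients. You can get it within your own approach by not splitting at all. Since $|D^2u^*(y)|\lesssim\|y\|^{-1}$ and $\|(x_1,x_2+t)\|\ge(\|x\|+t)/\sqrt2$ for $x_2\ge0$,
\[
|\nabla(\tilde u_\eps-u^*)(x)|\le\int_0^\eps|D^2u^*(x_1,x_2+t)|\,\mathrm{d}t\lesssim\log\Big(1+\frac{\eps}{\|x\|}\Big)\qquad\text{for all }x\in\overline\Omega .
\]
This is exactly the information in the paper's formulas $\frac1{2\pi}\log\big(1+\frac{2\eps x_2+\eps^2}{\|x\|^2}\big)$ and $\min\{\frac\pi2,\frac{\eps|x_1|}{\|x\|^2}\}$. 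Substituting $r=\eps\rho$ gives $\eps^2\big(C^q\Gamma(q+1)+\int_1^{1/\eps}\rho^{1-q}\,\mathrm{d}\rho\big)$ in the interior and $\eps\big(C^q\Gamma(q+1)+\int_1^{1/\eps}\rho^{-q}\,\mathrm{d}\rho\big)$ on the boundary. These yield all the stated gradient rates.

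\textbf{Second derivatives near $p=1$.} Your remark that the Hessian estimate is `still fine' near $p=1$ is incorrect. The integral $\int_{2\eps}^1\eps^pr^{1-2p}\,\mathrm{d}r$ carries a factor $\frac1{2(p-1)}$ and equals $\eps\log\frac1{2\eps}$ at $p=1$.

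This is not an artifact of your method. In polar coordinates $x=r(\cos\theta,\sin\theta)$,
\[
\partial_{x_1}\partial_{x_2}(u^*-\tilde u_\eps)(x)=-\frac{x_1(2\eps x_2+\eps^2)}{\pi\|x\|^2\|x+\eps e_2\|^2}\approx-\frac{\eps\sin2\theta}{\pi r^2}\qquad\text{for } r\gg\eps .
\]
Hence $\|D^2(\tilde u_\eps-u^*)\|_{L^1(\Omega)}\gtrsim\eps|\log\eps|$. The stated bound therefore fails at $p=1$ and cannot hold with a constant uniform as $p\downarrow1$. The paper's Step 4 overlooks this as well: the terms it borrows from Step 2.5 of Lemma \ref{lemma barron regularity} decay like $\eps^2/\|x\|^3$, whereas here the decay is $\eps/\|x\|^2$.

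\textbf{The Barron-norm extension.} Here it is the $H^2$ norm of the extension that must be protected, not the $H^3$ norm. Any bound $\eps^{-N}$ on the latter costs only a factor $e^N$ when $s=1/|\log\eps|$.

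A cutoff equal to $1$ on $\Omega$ and vanishing near $(0,-\eps)$ must vary on scale $\eps$. Near the singularity, $v_0(\cdot+\eps e_2)$ is essentially the linear function $-\frac{\log\eps}{\pi}(x_2+\eps)$. The cutoff terms alone therefore contribute $\sim|\log\eps|$ to the $H^2$ norm, and interpolation then only gives $|\log\eps|^{3/2}$. A cutoff supported in $\{x_2>-\eps/2\}$ is worse still, of order $\eps^{-1/2}$.

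Two ways to fix this:
\begin{itemize}
\item apply Stein's extension operator on the fixed Lipschitz domain $\Omega$;
\item regularize rather than cut off, replacing $\log\|y\|^2$ by a smooth function of $\|y\|^2$ that agrees with it for $\|y\|\ge\eps$.
\end{itemize}
The paper avoids the issue altogether. It writes $x_2\log\|x\|^2=2x_2\log x_2+x_2\log(1+(x_1/x_2)^2)$ and uses the homogeneity trick, with one-dimensional Barron norms of size $|\log\eps|$.
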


\begin{proof}
{\bf Step 1: Norm bounds.} Since $x_1\left(\frac1\pi \,\arctan(x_1/x_2) +\frac12\right)$ is a Barron function, we focus on the logarithmic corrector term
\[
x_2\, \log(\|x\|^2) = x_2\,\log\left(x_2^2 \left(1+ \frac{x_1^2}{x_2^2}\right)\right) = 2x_2\log(x_2) + x_2\,\log\left(1+ \left(\frac{x_1}{x_2}\right)^2\right),
\]
which can be analyzed by mostly one-dimensional tools, imitating the first step of the proof of Lemma \ref{lemma barron regularity}. If $x_2\geq \eps$ and $|x_1|\leq 1$, we note that $|x_1/x_2|\leq 1/\eps$. The functions
\[
g_1(x) =  2\,x\log x, \qquad g_2(z) = \log\left(1+z^2\right)
\]
satisfy
\begin{align*}
\int_\eps^1 |g_1''(x)|\sqrt{1+x^2}\dx &\leq 4\int_\eps^1 \frac1x \dx = 2\,|\log\eps|,\\
\int_{-1/\eps}^{1/\eps} |g_2''(z)|\,\sqrt{1+z^2}\dz &=2 \int_{-1/\eps}^{1/\eps} \frac{|1-z^2|}{(1+z^2)^{3/2}}\dz \sim |\log\eps|
\end{align*}
since the integrand decays like $1/z$ for large $z$. Consequently $\|\tilde u_\eps\|_{\B(\Omega)}\lesssim |\log \eps|$, i.e.\ there exists a Barron function of Barron norm $\sim|\log \eps|$ which coincides with $\tilde u_\eps$ in $\Omega$. 

As in the proof of the first claim of Lemma \ref{lemma sobolev regularity}, we find that 
\[
\|D^2\tilde u_\eps\|_{L^2(\Omega)}^2 \lesssim \int_\eps^1 \left(\frac r{r^2}\right)^2 r\dr = \int_\eps^1 \frac1r \dr = |\log\eps|.
\] 

{\bf Step 2a: $L^\infty$- and $L^q$-closeness in the interior.} Since 
\[
\bar u (x) = x_1\left(\frac1\pi \,\arctan\left(\frac{x_1}{x_2}\right) + \frac12\right)
\]
is a Barron function, it is in particular Lipschitz continuous and $\big|\bar u(x) - \bar u(x_1, x_2+\eps)\big|\lesssim \eps$. The logarithmic term satisfies
\begin{align*}
\big|x_2\log(\|x\|^2) - (x_2+\eps)\,\log\big(x_1^2+(x_2+\eps)^2\big)\big| & \leq x_2 \left|\log\left(\frac{x_1^2+x_2^2}{x_1^2+(x_2+\eps)^2}\right)\right| + \eps \,\big|\log\big(x_1^2+(x_2+\eps)^2\big)\big|\\
	&\leq x_2\,\log\left(1+  \frac{2\eps x_2+\eps^2}{x_1^2+x_2^2}\right) + 2\eps|\log\eps|
\end{align*}
since the second term is largest for $x_1=x_2=0$ as $\|x\|$ cannot become large. The first term is maximal when $x_1=0$, i.e.\ we must consider the one-dimensional function
\[
x_2\,\log\left(1+ 2\frac{\eps}{x_2} + \frac{\eps^2}{x_2^2}\right) = x_2\log\left(\left(1+ \frac\eps{x_2}\right)^2\right) = 2x_2 \,\log\left(1+ \frac{\eps}{x_2}\right).
\]
This expression is largest when 
\[
0= \frac{d}{dx_2} \big(x_2\,\log(1+\eps/x_2)\big) = \log\left(1+\frac\eps{x_2}\right) + \frac{x_2}{1+ \frac\eps{x_2}}\,\left(-\frac\eps{x_2^2}\right) = \log\left(1+ \frac\eps{x_2}\right) - \frac{\eps}{x_2+\eps}
\]
so
\[
2x_2 \,\log\left(1+ \frac{\eps}{x_2}\right) \leq \frac{2x_2\eps}{\eps+x_2} \leq 2\eps\frac{x_2}{x_2+\eps} \leq 2\eps.
\]
Putting all terms together, we find that $\|\tilde u_\eps - u^*\|_{L^\infty(\Omega)}\leq 2\eps|\log\eps| + C\eps$ with $C$ depending also on the Lipschitz-constant of the arctangent term. On the other hand, if $q<\infty$ then we use $x_2\geq 0$ to analyze the logarithmic term:
\begin{align*}
\|u^*-\tilde u_\eps\|_{L^q(\Omega)} &\leq C\eps \,|\Omega|^{1/q}  +  \left(\int_\Omega \big|\eps\,\log\big(x_1^2 + x_2^2 + \eps^2\big)\big|^q\dx\right)^{1/q}
	\leq  C\eps \,|\Omega|^{1/q} + \eps\left(\int_0^1 |\log(r^2)|^q\,r\dr\right)^{1/q}\\
	&=  C\eps \,|\Omega|^{1/q} + 2\eps\left(\int_0^1 |\log(r)|^q\,r\dr\right)^{1/q}
	\lesssim 4\eps \,|\Omega|^{1/q} + q\eps
\end{align*}
as in Step 2.3 of the proof of Lemma \ref{lemma barron regularity}.

{\bf Step 2b:  $L^\infty$- and $L^q$-closeness on the boundary.} $L^\infty$-closeness follows just as above. On the boundary, the logarithmic term simplifies to
\[
\big|x_2\log(\|x\|^2) - (x_2+\eps)\,\log\big(x_1^2+(x_2+\eps)^2\big)\big|  = \eps\,|\log(x_1^2+\eps^2)|.
\]
Similar to the proof of Lemma \ref{lemma barron regularity}, the logarithm is in $L^q(-1,1)$ for all $q<\infty$ and
\[
\int_0^1|\log x|^q \dx = \Gamma (q+1) \qquad \Ra \quad \|u^*-\tilde u\|_{L^q(B_1(0) \cap \partial \R^2_+)} \leq q\,\eps \approx\dots
\]

{\bf Step 3a: First derivatives in the interior.} Using the derivatives computed in Lemma \ref{lemma sobolev regularity} and the properties of logarithms as in Step 2, we find that
\begin{align*}
\big(\partial_{x_1}u^* - \partial_{x_1}\tilde u_\eps \big)(x)&= \frac1\pi \left(\arctan\left(\frac{x_1}{x_2}\right) - \arctan\left(\frac{x_1}{x_2+\eps}\right)\right) 
	\\
\big(\partial_{x_2}u^* - \partial_{x_2}\tilde u_\eps\big)(x) 
%&= -\frac 1{2\pi}\log(\|x\|^2) + \frac1{2\pi} \log\big(x_1^2+(x_2+\eps)^2\big)\\
%	& = \frac1{2\pi}\,\log\left(\frac{x_1^2+(x_2+\eps)^2}{x_1^2+x_2^2}\right)\\
	&= \frac1{2\pi} \,\log\left(1+ \frac{\eps x_2 + \eps^2}{x_1^2 + x_2^2}\right).
\end{align*}
Note that
\begin{align*}
\left|\arctan\left(\frac {x_1}{x_2}\right) - \arctan\left(\frac{x_1}{x_2+\eps}\right)\right| &= \int_{\frac{|x_1|}{x_2+\eps}}^{|x_1|/x_2} \frac 1{1+s^2}\ds\\
	&= |x_1| \int_{|x_1|/(x_2+\eps)}^{|x_1|/x_2} \frac{1}{(|x_1|/s)^2 + |x_1|^2}\,\frac{|x_1|}{s^2}\ds\\
	&= \int_{x_2}^{x_2+\eps} \frac{|x_1|}{t^2+x_1^2}\dt \leq \frac{\eps|x_1|}{x_2^2+x_1^2}
\end{align*}
and, using that $\arctan$ is an odd function with $0\leq \arctan(z) \leq \pi/2$ for $z\geq 0$, we find that
\[
\left|\arctan\left(\frac {x_1}{x_2}\right) - \arctan\left(\frac{x_1}{x_2+\eps}\right)\right| \leq \min\left\{\frac\pi2, \, \frac{\eps|x_1|}{x_1^2+x_2^2}\right\}.
\]
In particular, we conclude that
\begin{align*}
\|\partial_{x_1}(u^*-\tilde u_\eps)\|_{L^q(\Omega)}^q &\leq \pi\int_0^1 \min\left\{\frac\pi2, \frac{\eps r}{r^2}\right\}^q r\dr\\
	&= \pi\int_0^{2\eps/\pi}\left(\frac\pi 2\right)^q r\dr + \int_{2\eps/\pi}^1 \eps^q r^{1-q}\dr\\
	&= \,\left(\frac\pi 2\right)^{q-1}\eps^2 + \eps^q \begin{cases} \frac1{2-q} &q<2\\ |\log(2\eps/\pi)| &q=2\\ \frac1{q-2}\left(\frac{2\eps}\pi\right)^{2-q} &q>2\end{cases}.
\end{align*}
As in Step 2.3 in the proof of Lemma \ref{lemma barron regularity}, we have 
\[
\|\partial_{x_2}(u^*-\tilde u_\eps)\|_{L^q(\Omega)} \lesssim \begin{cases}\left(\frac1{q-1}+q\right) \eps^{2/q}&q>1\\ \eps^2|\log\eps| & q=1.\end{cases}
\]
 for $q>1$. Overall, we find that
 \[
 \|\nabla (\tilde u_\eps-u^*)\|_{L^q(\Omega)} \lesssim \begin{cases} \eps &q<2\\ \eps\sqrt{|\log\eps|} & q=2\\ \eps^{2/q} & q>2.\end{cases}
 \]

{\bf Step 3b: First derivatives on the boundary.} 
On the boundary, we find that
\begin{align*}
\|\partial_{x_1}u^*-\partial_{x_1}\tilde u_\eps\|_{L^q(B_1 \cap \partial\R^2_+)} 
	&\leq \frac1\pi \left(2\int_0^1 \left|\frac\pi2-\arctan\left(\frac{x_1}\eps\right)\right|^q\dx_1\right)^{\frac1q}\\
	&\leq \frac{2^{1/q}}\pi \left(\int_0^{2\eps/\pi} \left(\frac\pi2\right)^q\ds + \int_{2\eps/\pi}^1 \left(\frac {\eps}s\right)^q\ds\right)^{\frac1q}\\
	&\leq \frac{2^{1/q}}\pi \left(\left(\frac\pi2\right)^q\,\frac{2\eps}\pi + \frac{\eps^q}{q-1} \left(\frac{2\eps}\pi\right)^{1-q} \right)^{\frac1q}\lesssim \frac{\eps^{1/q}}{q-1}\\
\|\partial_{x_2}(\tilde u_\eps - u^*)\|_{L^q(B_1\cap \partial \R^2_+)} &\leq \frac1{2\pi} \left(\int_{-1}^1 \log^q\left(1+ \frac{\eps ^2}{x_1^2}\right)\dx_1\right)^\frac1q \lesssim q\,\eps^{1/q}
\end{align*}
as in Step 2.4 of the proof of Lemma \ref{lemma barron regularity}. For $q=1$, the integral of $1/s$ becomes a logarithm instead.

{\bf Step 4: Second derivatives.} We only consider the closeness of second derivatives in the interior since the tangential second derivative of $\tilde u_\eps$ on the boundary is a smooth function by design while that of $u^*$ is a Dirac measure at the origin so that the two {\em cannot} be close  in any integral norm.

Since $\partial_{x_1}\partial_{x_1}u^* = -\partial_{x_2}\partial_{x_2}u^*$ and the same for $\tilde u_\eps$, we only need to consider one of the derivatives. Again, we use the expressions computed in Lemma \ref{lemma sobolev regularity} to estimate
\begin{align*}
\big(\partial_{x_1}^2u^* - \partial_{x_1}^2\tilde u_\eps \big)(x)
	&= \frac{x_2}{\pi \,\|x\|^2} - \frac{x_2+\eps}{\pi\,\|(x_1, x_2+\eps)\|^2} &&= \frac{x_2}\pi\left(\frac1{\|x\|^2} - \frac{1}{\|x+\eps e_2\|^2}\right) + \frac{\eps}{x_1^2+(x_2+\eps)^2}\\
\big(\partial_{x_1}\partial_{x_2} u^* - \partial_{x_1}\partial_{x_2}\tilde u_\eps\big)(x)
	& = \frac{x_1}{\pi \,\|x\|^2} - \frac{x_1}{\pi\,\|(x_1, x_2+\eps)\|^2} &&= \frac{x_1}\pi\left(\frac1{\|x\|^2} - \frac{1}{\|x+\eps e_2\|^2}\right)
\end{align*}
The first term in both expressions has been studied in Step 2.5 of the proof of Lemma \ref{lemma barron regularity} while the final term is easy to analyze:
\[
\int_\Omega \left|\frac{\eps}{x_1^2+(x_2+\eps)^2}\right|^p\dx\leq \int_0^1 \left(\frac{r}{r^2+\eps^2}\right)^pr\dr \leq \int_0^\eps \frac{r^{p+1}}{\eps^{2p}}\dr + \int_\eps^1 r^{1-p}\dr \leq \frac1{p+2}\eps^{2-p} + \frac{\eps^{2-p}}{2-p}. \qedhere
\]
\end{proof}

\subsection{Quadrants}
\label{section quadrants}

\begin{figure}
\includegraphics[width = .47\textwidth]{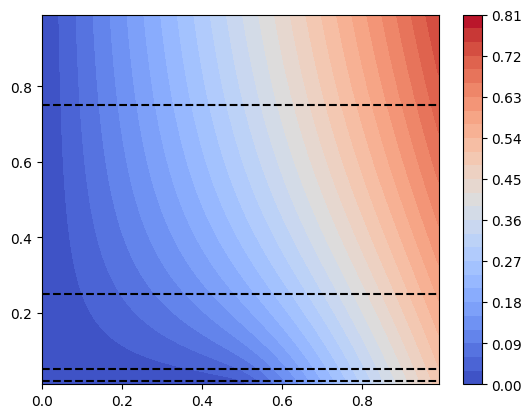}\hfill
\includegraphics[width = .49\textwidth]{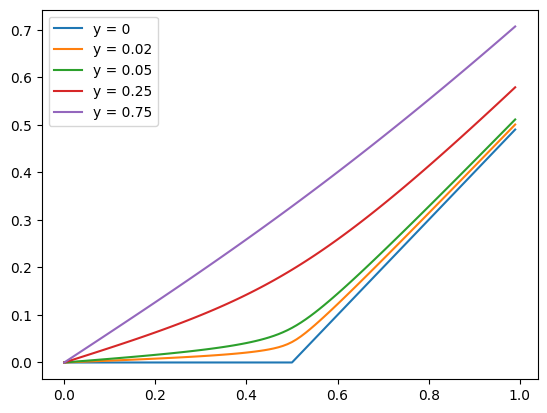}
\caption{\label{figure quadrant 1}
Harmonic function on the sector $\{x_1>0, \,x_2>0\}$ with boundary condition $\sigma(x_1+w_1x_2 - 0.5)$ with $w_2\leq 0$, created from $u^*$ by the method of mirror charges as outlined in Section \ref{section quadrants}. Note that Figures \ref{figure harmonic function}, \ref{figure quadrant 1} and \ref{figure quadrant 2} have different color scales.
}
\end{figure}

%\begin{figure}\label{figure quadrant 2}
%\includegraphics[width = .47\textwidth]{./sector_contour_2.png}\hfill
%\includegraphics[width = .49\textwidth]{./sector_profiles_2.png}
%\caption{
%Harmonic function on the sector $\{x_1>0, \,x_2>0\}$ with boundary condition $\sigma(x_1+4x_2 - 0.8)$, created by the method of mirror charges as outlined in Section \ref{section quadrants}.
%}
%\end{figure}

\begin{figure}
\includegraphics[width = .47\textwidth]{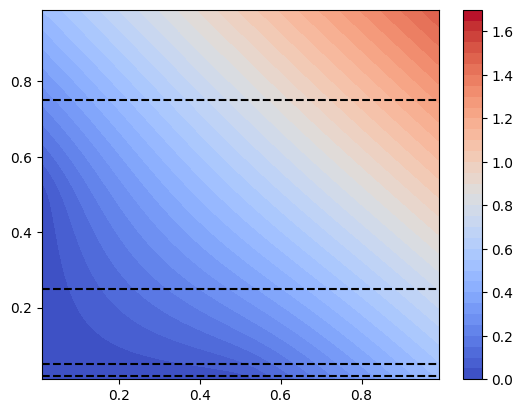}\hfill
\includegraphics[width = .49\textwidth]{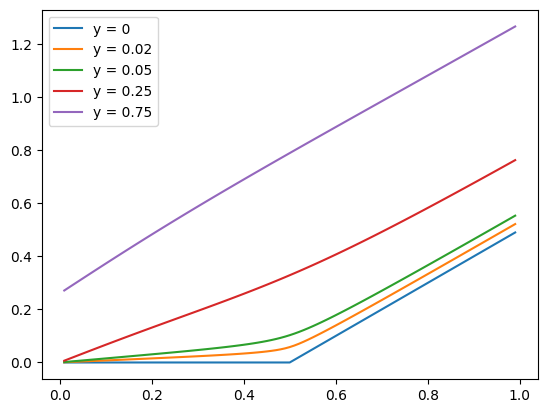}
\caption{\label{figure quadrant 2}
Harmonic function on the sector $\{x_1>0, \,x_2>0\}$ with boundary condition $\sigma(x_1+x_2 - 0.5)$, created by the method of mirror charges as outlined in Section \ref{section quadrants}.
}
\end{figure}

Building on the analysis on the half-space, we consider a quadrant $Q:= \{x\in \R^2: x_1, x_2>0\}$. Here we solve the PDE
\[
\begin{pde}
\Delta u &= 0 &\text{in }Q\\
u &= \sigma(w\cdot x+b) &\text{on }\partial Q.
\end{pde}
\]
As on the half-space, solutions are generally not unique as we can add e.g.\ a multiple of the harmonic function $h(x_1,x_2) = x_1x_2$ which vanishes on the boundary. However, they are `more unique' in the sense that two solutions which differ by a function which grows subquadratically at infinity (e.g.\ a uniformly continuous function) automatically coincide: Assume that 
\[
\Delta u_1 = \Delta u_2 \text{ in }Q, \quad u_1 = u_2 \text{ on }\partial Q, \quad \limsup_{|X|\to \infty, \:x\in Q} \frac{|(u_1-u_2)(X)|}{|X|^2} = 0.
\]
The function $h:= u_1 - u_2$ is harmonic in the quadrant $Q$ and vanishes on its boundary. We can extend $h$ to a harmonic function on the entire half-space $\R^2_+$ by odd reflection in $x_1$ as $h(-x_1, x_2) = -h(x_1,x_2)$ and note that the extended function still satisfies the sub-quadratic growth condition. By Liouville's Theorem in the version \cite[Theorem 1.1]{vaishampayan2024solving}, we find that $h(x_1, x_2) = \lambda x_2$ for some $\lambda \in\R$. Repeating the argument with the roles of $x_1, x_2$ exchanged, we deduce that $h(x_1, x_2) = \mu x_1$ for some $\mu\in\R$, which leads us to conclude that $\mu = \lambda = 0$ and thus $u_1 = u_2$. In particular, solutions that can be approximated well by ReLU networks on large domains automatically are unique.

We distinguish three situations for the approximation of the unique solution:

\begin{enumerate}
\item $\sigma(w\cdot x+b)$ is affine linear on $\partial Q$. Then either $u\equiv0$ or $u(x) = w\cdot x+b$ is a linear (in particular, Barron) harmonic function with the correct boundary values.

\item The line $\{x : w\cdot x+b = 0\}$ intersects $Q$ and crosses $\partial Q$ once.

\item The line $\{x : w\cdot x+b = 0\}$ intersects $Q$ and crosses $\partial Q$ twice.
\end{enumerate}

Let us consider the non-trivial second and third cases in detail. Since $w\cdot x+b = \sigma(w\cdot x+b) - \sigma(-w\cdot x - b)$ and linear functions are both Barron and harmonic, it suffices for us to consider the case that $b<0$. A crossing occurs at $(x,0)$ if $w_1x +b =0$ and at $(0,y)$ if $w_2y+b=0$. If $b<0$, then the line $\{x: w^Tx+b\}$ intersects the positive $x$-axis $\{(x,0) : x>0\}$ if and only if $w_1>0$ and the positive $y$-axis if and only if $w_2>0$.

Consider a single crossing first. We construct $u$ from $u^*$ inspired by the method of ``mirror charges'' in the computation of Green's functions.
Without loss of generality, we may assume that $w_1>0$ and $w_2\leq 0$. Define
\begin{equation}
u(x_1, x_2) = w_1\,u^*(x_1+b/w_1, x_2) - w_1u^*(-x_1+b/w_1, x_2).
\end{equation}
Then by construction $u$ is harmonic in the half-plane $x_2>0$ -- in particular the quadrant $Q$ -- and 
\begin{align*}
u(x_1, 0) &= w_1\,\sigma\left(x_1+b/w_1\right) - w_1\,\sigma\left(-x_1+\frac b{w_1}\right) = \sigma(w_1x_1+w_2\cdot0+b) - 0 &\qquad \forall\ x_1\geq0\\
u(0,x_2) &= w_1\left(u^*\left(\frac b{w_1}, x_2\right) - u^*\left(\frac b{w_1}, x_2\right) \right) = 0  = \sigma(w_1\cdot 0 + w_2x_2+b)& \forall\ x_2\geq 0
\end{align*}
on $\partial Q$. In this situation, the exact value of $w_2\leq 0$ is irrelevant (Figure \ref{figure quadrant 1}). Similarly, if $b<0$ and there are two crossings (i.e.\ $w_1, w_2>0$), we can explicitly construct
\begin{align*}
u(x_1,x_2) &= w_1\,u^*\left(x_1+ \frac b{w_1}, \,x_2\right) - w_1\,u^*\left(-x_1+\frac b{w_1}, \,x_2\right)\\
	&\qquad\quad + w_2\,u^*\left(x_2+ \frac b{w_2}, \,x_1\right) - w_2\, u^*\left(-x_2+\frac b{w_2}, \,x_1\right),
\end{align*}
see Figure \ref{figure quadrant 2}.
The regularity/approximation results in Lemmas \ref{lemma barron regularity} and \ref{lemma h2 approximation} can therefore be used to obtain approximation rates on bounded sets also locally in the quadrant $Q$. In principle, if one of the axis intersections is much further from the origin than the length-scale of interest, a different analysis might be in order to zoom in on the behavior far from the singularity. Here, the function $u^*$ can indeed be represented as a Barron function due to its smoothness. This is easier than the analysis given in Lemma \ref{lemma barron regularity}, but we do not provide details since our main focus will be rectangular domains, where the length-scale of interest is finite and well defined.

Unlike the situation of a half-space, the quadrant analysis does not generalize immediately to higher dimensions: While the intersection of two lines is a single point, the intersection of the plane $w\cdot x+b = 0$ with the boundary of the quadrant $x_1, x_2>0$ in three dimensions is most generally a piecewise linear curve with two segments. The dimension cannot be reduced as readily here.
%When working on a half-space $H= \{x_1>0\}$, the $d$ parameters $w_1, \dots, w_{d-1},b$ suffice to fully specify the boundary condition $\sigma(w\cdot x+b)$ since $x_d =0$ on $\partial \R^2_+$.
We present example plots of both non-trivial situations and mirror charge solutions in Figures \ref{figure quadrant 1} and \ref{figure quadrant 2}.

\subsection{Rectangular domains}

\begin{figure}
\includegraphics[width=.48\textwidth]{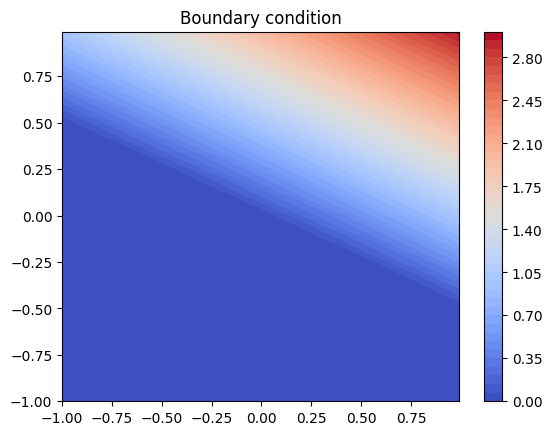}\hfill
\includegraphics[width=.48\textwidth]{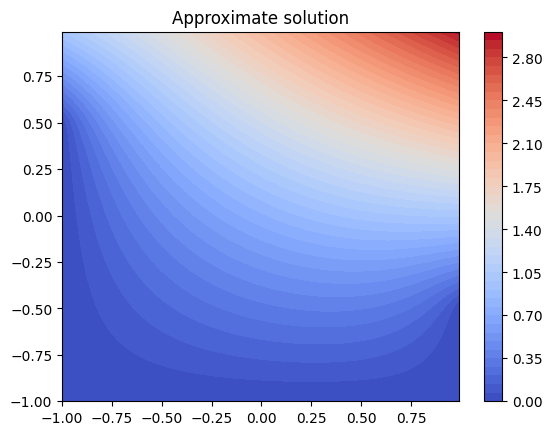}
\caption{{\bf Left:} Single neuron boundary condition. {\bf Right:} Solution of the Laplace equation on the square with the boundary condition on the left, approximated by a finite difference method.}
\end{figure}
Finally, we are ready to conclude the proof of Theorem \ref{theorem main}. We first establish the result in the case of single neuron boundary values and harmonic functions.

\begin{lemma}\label{lemma rectangle}
Let $R:= [a,b]\times [c,d]$ be a rectangular domain in $\R^2$. There exists a unique weak solution $u_{(w,b)} \in H^1(R)$ of the equation 
\[
\begin{pde}
-\Delta u&= 0 & \text{in }R\\
u &= \sigma(w\cdot x+b)& \text{on }\partial R
\end{pde}
\]
for any given $w\in\R^2$ and $b\in\R$ and it satisfies the following:
\begin{enumerate}
\item $u_{(w,b)} \in W^{1,q}(R)\cap W^{2,p}(R)$ and
\[
\|u_{(w,b)}\|_{W^{1,q}(R)} \lesssim q\sqrt{\|w\|_2^2 + b^2}, \qquad   \|u_{(w,b)}\|_{W^{2,p}(R)} \lesssim \left(\frac1{2-p}\right)^\frac1p \sqrt{\|w\|_2^2 + b^2}
\]
for all $q\in(1,\infty)$ and $p\in[1,2)$.

\item For every $\eps\in(0,1/2)$, there exists $u_{(w,b;\eps)}\in \B(\R^2)$ and a universal constant $C>0$ such that
\begin{align*}
\|u_{(w,b;\eps)}\|_\B &\leq C\,\sqrt{\|w\|_2^2 + b^2}\,|\log\eps|\\
\|u-u_{(w,b;\eps)}\|_{L^\infty(R)} &\leq C\sqrt{\|w\|_2^2 + b^2}\,\eps\\
\|u-u_{(w,b;\eps)}\|_{W^{1,q}(R)} &\leq C\left(\frac1{q-1} + q\right) \sqrt{\|w\|_2^2 + b^2}\,\eps^{2/q}&& \forall\ q\in (1,\infty)\\
\|u-u_{(w,b;\eps)}\|_{W^{2,p}(R)} &\leq C \left(\frac1{2-p}\right)^{1/p} \sqrt{\|w\|_2^2+b^2}\,\eps^{(2-p)/p}&& \forall\ p\in [1,2).
\end{align*}
\end{enumerate}
\end{lemma}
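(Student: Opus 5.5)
Existence and uniqueness of the weak solution follow from Lax--Milgram. The boundary datum $g=\sigma(w\cdot x+b)$ is the restriction of a Lipschitz function on $\R^2$, so it has an $H^1$ extension, and we minimize the Dirichlet energy in $g+H^1_0(R)$. By homogeneity of $\sigma$, $\sigma(w\cdot x+b)=\sqrt{\|w\|^2+b^2}\,\sigma(\hat w\cdot x+\hat b)$ with $\|\hat w\|^2+\hat b^2=1$, so we may normalize $\|w\|^2+b^2=1$. As in the quadrant discussion, we may assume $b<0$ after subtracting the affine harmonic function $w\cdot x+b$, which is Barron with norm $\lesssim\sqrt{\|w\|^2+b^2}$. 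The hyperplane $\{w\cdot x+b=0\}$ meets $\partial R$ in at most two points $z_1,z_2$, or it contains a side, in which case the datum is affine on each side of $\partial R$; that case is handled by the same argument below with no singular points. Each $z_i$ lies either in the relative interior of a side or at a corner.

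The plan is to write $u=S+h$. For each crossing point $z_i$ we place a local singular part $S_i$, built from the explicit half-space function $u^*$ of Lemma \ref{lemma sobolev regularity}, rescaled and rotated to the side containing $z_i$ and multiplied by the jump in slope of $\sigma(w\cdot x+b)$ across $z_i$. We then apply the mirror-charge construction of Section \ref{section quadrants} with respect to the adjacent sides, so that near the corners $S_i$ has exactly the right (piecewise linear) trace on both sides meeting there. At a corner crossing we use the two-crossing quadrant formula directly. The resulting $S=\sum_i S_i$ is harmonic in $R$, and $g-S|_{\partial R}$ is then a boundary datum that is smooth on each closed side and continuous at the corners. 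Its kinks have been removed, though logarithmic terms $x_2\log|x|$ from far mirror charges remain; these are smooth away from the images, and the images lie outside $\overline R$ or only on lines extending the sides, at positive distance from the relevant portions.

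Let $h$ solve $\Delta h=0$, $h=g-S$ on $\partial R$. The main obstacle is to show that $h$ is smooth enough, namely $h\in H^{2+s}$ or better, so that it is a Barron function by the Sobolev embedding lemmas in the appendix. On a rectangle this requires compatibility conditions at the corners: continuity, plus the second-order condition that the tangential second derivatives of the data along the two sides cancel appropriately (since $\partial_{11}h+\partial_{22}h=0$ at a right-angle corner). I would first try to enforce these by adding finitely many explicit harmonic polynomials and corrections, for example multiples of $x_1x_2$-type terms and Re/Im of $z^2$ centered at the corners, to match the data to second order at each corner. Then I would use Grisvard's regularity theory on polygons, or odd/even reflection across the sides to tile $\R^2$ periodically, to get $h\in H^3(R)$ with norm $\lesssim 1$. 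After that, $h$ extends to a Barron function with $\|h\|_\B\lesssim 1$ independently of $\eps$. Uniform control in the location of the crossing points is delicate when $z_i$ approaches a corner. There I would use the quadrant formula as a continuous interpolation and accept constants depending on the rectangle.

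Given this decomposition, part (1) follows from Lemma \ref{lemma sobolev regularity} (2) applied to each rescaled, translated copy of $u^*$, on balls of radius $\lesssim\diam R$ using the scaling identities, plus $h\in H^3$. Part (2) sets $u_{(w,b;\eps)}=h+\sum_i\tilde S_i$, where $\tilde S_i$ replaces each copy of $u^*$ by the Barron function $u_\eps$ of Lemma \ref{lemma barron regularity}. Since $u_\eps$ has exactly the boundary values $\sigma$ and the mirror combinations are affine images, the traces are preserved. The Barron norm, $L^\infty$, $W^{1,q}$ and $W^{2,p}$ estimates follow termwise from Lemma \ref{lemma barron regularity}, using the triangle inequality and the fact that there are finitely many (at most eight) terms.
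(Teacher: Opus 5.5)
Your argument follows the paper's route. It builds an explicit mirror-charge singular part from translated copies of $u^*$, adds a harmonic remainder whose data are smooth on each closed side (and which is therefore Barron, via $H^{2+s}$ and Lemma \ref{lemma sobolev embedding}, with norm independent of $\eps$), and substitutes $u^*\mapsto u_\eps$ in the singular part. The estimates are then read off Lemmas \ref{lemma sobolev regularity} and \ref{lemma barron regularity}. The paper obtains the remainder by extending the data to $V$ (Lemma \ref{lemma extension from boundary}) and applying Lemma \ref{lemma regularity cube} to $u^\sharp-\widehat u-V\in H^1_0(R)$, instead of solving for $h$ directly; this difference is cosmetic.

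The corner step you single out is not right as written, though the gap closes easily.

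\textbf{Why your fix cannot work.} Adding harmonic polynomials, or any function harmonic and $C^2$ near a corner $p$, cannot change the second-order defect $\phi_1''(p)+\phi_2''(p)$ of the data on the two sides at $p$. That defect vanishes for every such function. So if compatibility failed, your correction could not restore it.

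\textbf{Why compatibility holds anyway.} Mirror each crossing $z_i$ across the orthogonal side through its \emph{nearest} corner, as the paper does. Then $g-S_i$ is affine on the side containing $z_i$, $S_i$ is affine on that orthogonal side, and both singular points of $S_i$ stay at distance at least half the shorter side length from the remaining two sides. Hence near each corner $p$, the datum $g-S$ coincides on $\partial R$ with an affine function minus those $S_j$ not attached to $p$. Each such $S_j$ is harmonic in a ball of fixed radius around $p$: either $p$ lies in the open half-plane of $S_j$, or $S_j$ has affine trace on the line through $p$ and extends by Schwarz reflection. So all corner compatibility conditions hold, and the data have $C^k$ bounds independent of where the crossings lie. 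This also removes your uniformity worry near corners, so no interpolation is needed.

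\textbf{Why it would not matter anyway.} For this lemma, $h\in H^{2+s}$ for one fixed $s\in(0,1)$ suffices (Barron embedding, $W^{1,\infty}$, $W^{2,p}$ for $p<2$). At a right angle, with data smooth on each closed side, this requires only continuity at the corner. A nonzero defect merely adds a multiple of $\mathrm{Im}(z^2\log z)$ in local complex coordinates, which lies in $H^{3-\delta}$ for every $\delta>0$.

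\textbf{Your caution is still justified.} The eigenfunction argument in Lemma \ref{lemma regularity cube} equates $\sum_n\lambda_n^k\beta_n^2$ with $\|f-\Delta g\|_{H^k}^2$. That identity fails without boundary conditions on $f-\Delta g$; for $f-\Delta g\equiv 1$ and $k=1$ the sum diverges. So the paper's $H^3$ step needs a genuine corner analysis, and the compatibility above is what makes its conclusion true here.

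\textbf{A minor inaccuracy.} Replacing $u^*$ by $u_\eps$ preserves the trace of $S_i$ only on the two sides attached to $z_i$. This is because $u_\eps=u^*$ on the boundary line of its half-plane, and the mirror pair cancels on the orthogonal side. On the other two sides $\tilde S_i\neq S_i$. The lemma's estimates are unaffected, since they come from $u-u_{(w,b;\eps)}=\sum_i(S_i-\tilde S_i)$. However, $u_{(w,b;\eps)}$ does not attain $g$ exactly on $\partial R$.
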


\begin{proof}
The boundary condition $g(x) = \sigma(w\cdot x+b)$ satisfies $g\in \B \subseteq W^{1,\infty}(R) \subseteq H^{1}(R)$, so the existence of a unique weak solution $u^\sharp$ follows from \cite[Theorem 8.3]{gilbarg2015elliptic}. 

If the line $\{x : w^Tx+b = 0\}$ intersects the open rectangle $R$, then it intersects the boundary $\partial R$ in two distinct points $X^1, X^2\in\R^2$. Without loss of generality, we assume that $X^1$ lies on the line $\{x_2=0\}$ and that the nearest corner is the origin $(0,0)$. We can select $u_{w,b,1}$ such that $\Delta u_{w,b,1} = 0$ in the quadrant $\{x_1, x_2 >0\}$ and such that $u_{w,b,1} = \sigma(w^Tx+b)$ on the ray $\{x_1>0, x_2=0\}$ and $u_{w,b,1}$ is linear on the ray $\{x_1=0, x_2>0\}$. 

Analogously, we select $u_{w,b,2}$ to match the boundary condition around $X^2$ on the side containing $X^2$ and making $u_{w,b,2}$ to be linear on the nearest orthogonal side to $X^2$ (with an arbitrary tie-break mechanism in the case of a tie). Then $\widehat u = u_{w,b,1} + u_{w,b,2}$ satisfies $\Delta \widehat u = 0$ inside the rectangular region $R$ and $\widehat u - \sigma(w^Tx+b)$ is an infinitely smooth function on $\partial R$ satisfying estimates on all (tangential) derivatives {\em independent of $\eps$} since $u^*$ is smooth except at the singularity, and so are $u_\eps, \tilde u_\eps$. The mirror charge technique ensures that $u_{w,b,i}$ is well-behaved both on the side containing $X_i$ and the nearest orthogonal side, `hiding' the singular derivatives except on the remaining two sides. The magnitude of tangential derivatives is thus governed by the aspect ratio of the rectangle $R$. 

Let $k\in \mathbb N_0$. By Lemma \ref{lemma extension from boundary}, we note that there exists $V\in C^k(R)$ such that
\[
 u^\sharp - \widehat u - V \in H^1_0(R), \qquad \Delta \big(u^\sharp - \widehat u - V\big) = - \Delta V.
\]
By positive one-homogeneity $\sigma(w^Tx+b) = \sqrt{\|w\|^2+b^2} \sigma(\hat w^Tx + \hat b)$ with $\|(\hat w, \hat b)\|_2 = 1$, we see easily that $\|V\|_{C^k(R)}$ scales linearly in $\|(w,b)\|_2$. 
By Lemma \ref{lemma regularity cube}, we conclude that $\| u^\sharp - \widehat u - V\|_{H^k(R)} \leq C\,\|(w,b)\|_2$ for a constant $C>0$ which depends on the rectangle $R$, but not $w,b$. Choosing $k=3$ and using that $H^3$ embeds into $W^{2,2},\ W^{1,\infty}$ and $\B$, we see that 
\[
u^\sharp = \widehat u + \underbrace{(u^\sharp - \widehat u - V)}_{\in \B \cap W^{1,\infty} \cap H^2} + \underbrace{V}_{\in \B \cap W^{1,\infty}\cap H^2}.
\]
Thus, both the regularity and approximation properties of $u^\sharp$ reduce to those of $\widehat u$ and thus ultimately to those of $u^*$. 
\end{proof}

We stated the Lemma for approximation by $u_\eps$ as in Lemma \ref{lemma barron regularity}, but analogous bounds to Lemma \ref{lemma h2 approximation} can be derived as well, constructing $\tilde u_{(w,b;\eps)}$. We have now collected all ingredients of our main result.

\begin{proof}[Proof of Theorem \ref{theorem main}]
If $g$ is a Barron function, then for every $m\in \mathbb N$, there exist functions $g_m(x) = \frac1m \sum_{i=1}^n a_{i,m}\,\sigma(w_{i,m}\cdot x+b_{i,m})$ such that 
\begin{itemize}
\item $\frac1m \sum_{i=1}^m |a_i| \sqrt{\|w_i\|^2 + |b_i|^2} \leq \|f\|_\B$ and
\item $\|g_m - g\|_{C^0(\partial R)} \lesssim \|f\|_\B \,m^{-1/2}$
\end{itemize}
by \cite[Theorem 12]{review_article}.
 For given $\eps>0$, we consider $u_m, u_{m,\eps}:R\to \R$ given by 
\[
u_{m}(x) = \frac1m\sum_{i=1}^m a_i\,u_{(w_{i,m}, b_{i,m})}, \qquad 
u_{m,\eps}(x) = \frac1m\sum_{i=1}^m a_i\,u_{(w_{i,m}, b_{i,m};\eps)}
\]
and note that $u_m$ and $u_{m,\eps}$ remain bounded in $W^{1,q}(R)$ for all $q<\infty$, in $W^{2,p}(R)$ for all $p<2$, and in the case of $u_{m,\eps}$, in Barron space with norm bounds as claimed in Theorem \ref{theorem main}. As $m\to \infty$, the estimates on $\|u_m - u_{m,\eps}\|$ follow from Lemma \ref{lemma rectangle}. Since all functions remain bounded, we can pass to the limit (weakly in the Sobolev spaces) and the uniform estimates survive the passage to the limit $m\to \infty$ due to the lower semi-continuity of the norm under weak convergence.

The limit of $u_m$ is the harmonic function with boundary values $g$ due to the continuity of the trace operator in the weak topology (use e.g.\ \cite[Theorem 3.10]{MR2759829}), the limit of $u_{m,\eps}$ the approximant. The fourth claim follows analogously, using Lemma \ref{lemma h2 approximation} in place of Lemma \ref{lemma barron regularity} to prove an analogue of Lemma \ref{lemma rectangle}.
\end{proof}

\section{Application: A priori error estimates for the Deep Ritz Method}\label{section deep ritz}

As an application of the regularity results, we obtain a priori error estimates for a regularized Deep Ritz method to solve the equation $-\Delta u = 0$ on a rectangular two-dimensional domain with Barron boundary values $g$. The Deep Ritz method builds on the characterization of the harmonic function as a minimizer of the Dirichlet energy
\[
E_{Dir}(u) = \frac12\int_\Omega \|\nabla u\|^2\dx 
\]
in the affine function class $\{u\in H^1(\Omega): u-g\in H_0^1(\Omega)\}$. Since $g\in \B\subseteq H^1(R)$, the problem is well-defined as there exist competitors of finite energy. For computational approximation purposes, we consider a discretized and regularized functional.

Let $R$ be a rectangular domain in $\R^2$, $\pi_R$ the uniform distribution on $R$ and $\pi_{\partial R}$ the uniform distribution on $\partial R$. Assume that $x_1, \dots, x_N$ are independent samples from $\pi_R$ and $z_1, \dots, z_n$ are independent samples from $\pi_{\partial R}$. Consider the empirical integral functionals
\[
E_{Dir, N}(u) = \frac {|R|}{2N}\sum_{i=1}^N \|\nabla u(x_i)\|^2, \qquad E_{bd, n}(u) = \frac{|\partial R|}{2n} \sum_{i=1}^n \big|(u-g)(z_i)\big|^2
\]
where $|R|,\ |\partial R|$ denote the area of $R$ and length of $\partial R$ respectively, their continuous counterparts
\[
E_{Dir}(u) = \frac12\int_R\|\nabla u\|^2\dx, \qquad E_{bd}(u) = \int_{\partial R}|u-g|^2\ds,
\]
and the regularized Deep Ritz loss functional
\[
\widehat E_{N,n}(a,W,b) = E_{Dir,N}(u_{(a,W,b)})+ \lambda\,E_{bd,n}(u_{(a,W,b)}) + \mu\big(\|a\|_2^2 + \|W\|_F^2 + \|b\|_2^2\big)
\]
with parameters $\mu,\lambda$. As above, $a, b$ are the vectors containing the variables $a_i, b_i$, $W$ is the $m\times 2$-matrix containing the vectors entries $w_i$ in its rows and
\[
u_{(a,W,b)}(x) = \sum_{i=1}^m a_i\,\sigma(w_i\cdot x+b_i).
\]
All norms are Euclidean ($\ell^2$-, Frobenius) norms. In the limit $m, N, n\to + \infty$, we recover an approximation to the regularized Deep Ritz functional
\[
E_{DR, \lambda,\mu}(u) = \frac12\int_R\|\nabla u\|^2 \dx + \frac\lambda2\,\int_{\partial R} \big|u-g|^2\ds + \mu \,\|u\|_\B
\]
where regularity and compliance with boundary values are merely enforced by penalty. Below, we also denote $E_{\lambda}(u) = E_{DR, \lambda,0}(u)$ with $\mu=0$ for brevity.

\begin{proof}[Proof of Theorem \ref{theorem deep ritz}]
{\bf Step 1. Energy and closeness.} Using that $|v-w|^2 = |v|^2 +2(w-v)\cdot w -|w|^2$ for all vectors $v,w$ in a Hilbert space and the divergence theorem, we observe that
\begin{align*}
\|\nabla (u-u^*)\|^2_{L^2(R)} &= \int_R \|\nabla u\|^2 - \|\nabla u^*\|^2 + 2 \,\nabla (u^*-u)\cdot \nabla u^*\dx\\
	&= \int_R \|\nabla u\|^2 - \|\nabla u^*\|^2\dx + 2\int_{\partial R}(u^*-u)\,\partial_\nu u^*\ds\\
	&\leq \int_R \|\nabla u\|^2 - \|\nabla u^*\|^2\dx + 2\|u-u^*\|_{L^2(\partial R)}\|\nabla u^*\|_{L^2(\partial R)}.
\end{align*}
So at least for the continuous limiting version of the energy, the `energy gap' controls the closeness between the Deep Ritz minimizer and the actual solution to the variational problem. We further deduce that if $E_{Dir}(u) \geq E_{Dir}(u^*)$, then
\begin{align*}
\|\nabla (u-u^*)\|^2_{L^2(R)} &\leq E_{Dir}(u) - E_{Dir}(u^*) + \frac{2\,\|\nabla u^*\|_{L^2(\partial R)}}{\sqrt\lambda} \,\sqrt{\lambda\,\|u-u^*\|_{L^2(\partial R)}^2}\\
	& \leq E_{\lambda}(u) - E_{Dir}(u^*) + 2\,\|\nabla u^*\|_{L^2(\partial R)} \,\sqrt{\frac{E_{\lambda}(u) - E_{Dir}(u^*)}\lambda}.
\end{align*}
Furthermore, note that
\[
||| u||| = \sqrt{ \|\nabla u\|_{L^2(\Omega)} + \|u\|_{L^2(\partial\Omega)}}
\]
is an equivalent norm to the standard $H^1$-norm, so moreover
\[
\|u\|_{H^1(\Omega)}^2 \lesssim |||u|||^2 \leq E_{\lambda}(u) - E_{Dir}(u^*) + 2\,\|\nabla u^*\|_{L^2(\partial R)} \,\sqrt{\frac{E_{\lambda}(u) - E_{Dir}(u^*)}\lambda} + \frac{E_\lambda(u) - E_{Dir}(u^*)}\lambda.
\]
{\bf Step 2. Generalization gap in Barron space.} Similar proofs in a more classical setting of data science can be found e.g.\ in \cite[Theorem E.1]{park2023minimum}. 
Since the unit ball of Barron space has finite Rademacher complexity \cite[Theorem 6]{weinan2019lei} and the `loss function' $u\mapsto (u-g)^2$ is Lipschitz-continuous with Lipschitz constant $2(A+\|g\|_\B) (1+ \max_{x\in R}\|x\|)$ on the set $\|u\|_\B\leq A$ and satisfies $|u|^2\leq (A+ \|g\|_\B)^2 \big(1+ \max_{x\in R} \|x\|\big)^2$ , with probability at least $\delta>0$ we have
\[
\sup_{\|u\|_\B\leq A} E_{bd}(u) \leq  E_{bd,n}(u) + \frac{ 4A\,(A+\|g\|_\B) (1+ \max_{x\in R}\|x\|)}{\sqrt n}+ (A+ \|g\|_\B)^2 \big(1+ \max_{x\in R} \|x\|\big)^2\sqrt{\frac{2\log(1/\delta)}n}
\]
by \cite[Theorem 26.12]{livni2014computational}.
On the other hand, we note that
\begin{align*}
\|\nabla u_{(a,W,b)}(x)\|^2 &= \left\|\sum_{i=1}^m a_i 1_{\{x:w_i\cdot x+b_i>0\}}w\right\|^2
	= \sum_{i,j=1}^m a_ia_j\,1_{\{x: w_i\cdot x+b_i>0\}}1_{\{x:w_j\cdot x+b_j>0\}}w_i\cdot w_j
\end{align*}
and thus, denoting by $\hat\pi_N$ the empirical measure of $x_1,\dots, x_N$, we find that
\[
\left|\int_R\|\nabla u\|^2 \dx - |R| \int_R\|\nabla u\|^2 \d\hat \pi_N\right| \leq |R|\sum_{i,j=1}^m |a_i|\,\|w_i\|\,|a_j|\,\|w_j\| \left|\int_R 1_{\{w_i\cdot x+b_i>0\text{ and }w_j\cdot x+b_j>0\}}\d(\pi_R - d\hat\pi_N)\right|.
\]
It is well-known that half-spaces form a learnable hypothesis class of finite VC-dimension, and as do intersections of two half-spaces by a common boosting bound \cite[Theorem 10.3]{livni2014computational}. Thus, by the fundamental theorem of PAC learning \cite[Theorem 6.8]{livni2014computational}, there exists a universal constant $C>0$ such that
\[
\left|\int_R 1_{\{w_i\cdot x+b_i>0\text{ and }w_j\cdot x+b_j>0\}}\d(\pi_R - d\hat\pi_N)\right| \leq C\sqrt{\frac{1+ \log(1/\delta)}N}
\]
where the constant $C$ also absorbs the VC-bound. Overall, we conclude that
\[
\left|\int_R\|\nabla u_{(a,W,b)}\|^2 \dx - |R| \int_R\|\nabla u_{(a,W,b)}\|^2 \d\hat \pi_N\right| \leq C\, |R|\left(\sum_{i=1}^m|a_i|\,\|w_i\|\right)^2 \sqrt{\frac{\log(1/\delta)}N}.
\]
Passing to the limit $m\to\infty$, the same bound holds for general Barron functions with the Barron norm in place of the norm of the weights and biases $(a,W,b)$.
 
{\bf Step 3. Constructing an energy competitor.} Let $\eps = 1/m$. By Theorem \ref{theorem main}, we know that there exists $u_\eps$ such that
\[
\|u_\eps\|_\B\leq C \,\|g\|_\B\,|\log\eps|, \qquad u_\eps \equiv g\text{ on }\partial R, \qquad \|\nabla (u^*-u_\eps)\|_{L^2(\Omega)}^2 \leq C\,\|g\|_\B \eps.
\]
By the direct approximation theorem for Barron functions (see Section \ref{section review}), there exist $(a,W,b)$ such that
\begin{enumerate}
\item $\|a\|^2 + \|W\|^2 + \|b\|^2 \leq \|u_\eps\|_\B \leq C \,\|g\|_\B\,|\log\eps| = C\|g\|_\B \log(m)$ and
\item $\|u_{(a,W,b)} - u_\eps\|_{H^1(\Omega)} \leq C\,\|u_\eps\|_\B \,m^{-1/2} \leq C\,\|g\|_\B(\log m)\,m^{-1/2}$.
\end{enumerate}
Using the trace operator from $H^1(R)$ to $L^2(\partial R)$, we have
\begin{align*}
\int_{\partial R} \big|g- u_{(a,W,b)}\big|^2 \ds &= \|u_\eps - u_{(a,W,b)}\|_{L^2(\partial R)}^2
	\leq C\,\|u_\eps - u_{(a,W,b)}\|_{H^1(R)}^2 \leq C\,\|g\|_\B^2|\log m|^2\,m^{-1}
\end{align*}
for the boundary penalty term and
\begin{align*}
\frac12 \int_R \|\nabla u_{(a,W,b)}\|^2\dx &= \frac12 \int_R \big|\nabla \big(u_{(a,W,b)} - u^*+u^*\big)\big\|^2\dx\\
	&\leq  \frac12\int_R \frac{1+\alpha}2\|\nabla u^*\|^2 + \frac{1+\alpha^{-1}}2 \|\nabla (u_{(a,W,b)} - u^*)\|^2\dx\\
	&\leq \frac{1+\alpha}2 \int_R \|\nabla u^*\|^2\dx + \frac12\left(1+\frac1\alpha\right) \|u_{(a,W,b)} - u^*\|_{H^1(\Omega)}^2\\
	&\leq \frac{1+\alpha}2 \int_R \|\nabla u^*\|^2\dx + \left(1+\frac1\alpha\right)  \big(\|u_{(a,W,b)} - u_\eps\|_{H^1(\Omega)}^2 + \|u_\eps - u^*\|_{H^1(\Omega)}^2\big)\\
	&\leq \frac{1+\alpha}2 \int_R\|\nabla u^*\|^2\dx + \frac {C\,\|g\|_\B^2}\alpha \left(\frac{|\log m|^2}m + \frac1{m^2}\right)
\end{align*}
for $\alpha>0$ which can be chosen later. By a union bound, we find that all bounds hold simultaneously with probability at least $1-2\delta$ and
\begin{align*}
\inf_{(a,W,b)} \widehat E(a,W,b) &\leq (1+\alpha) \,E_{Dir}(u^*) + C\,\|g\|_\B^2 \left(\left(\alpha^{-1}+\lambda\right)\,\frac{|\log m|^2}{m} + \sqrt{\frac{\log(1/\delta)}N } + \lambda\sqrt{\frac{\log(1/\delta)}n}\right) \\
	&\qquad+ C\mu\,\|g\|_\B \log m\label{eq deep ritz energy bound}\showlabel
\end{align*}
with a domain-dependent constant $C>0$.

{\bf Step 4. Norm bound.} Assume that $(a,W,b)$ are parameters which satisfy the energy bound \eqref{eq deep ritz energy bound}. Then
\begin{align*}
\|a\|_2^2 + \|W\|_F^2 + \|b\|^2_2
	&\leq \frac{\widehat E(a,W,b) - E_{Dir,n}(u_{(a,W,b)})  - \lambda\,E_{bd,n}(u_{(a,W,b)})}\mu\\
	&\leq \frac{\widehat E(a,W,b) - E_{Dir}(u_{(a,W,b)})  - \lambda\,E_{bd}(u_{(a,W,b)})}\mu\\
	&\qquad\qquad + \frac{C}\mu \big(\|a\|_2^2 + \|W\|_F^2 + \|b\|_2^2\big)^2\left(\sqrt{\frac{\log(1/\delta)}{N}} +\sqrt{\frac{\log(1/\delta)}n}\right).
\end{align*}
We distinguish between two cases: 
\[
E_{Dir}(u_{(a,W,b)})  + \lambda\,E_{bd}(u_{(a,W,b)}) \leq E_{Dir}(u^*).
\]
This case will be studied below in Step 5b. Otherwise, we have
\begin{align*}
\|a\|_2^2 + \|W\|_F^2 + \|b\|^2_2
	 &\leq \frac{\widehat E(a,W,b) - E_{Dir}(u^*) }\mu  + \frac{C}\mu \big(\|a\|_2^2 + \|W\|_F^2 + \|b\|^2\big)^2\left(\sqrt{\frac{\log(1/\delta)}n} + \sqrt{\frac{\log(1/\delta)}N}\right)\\
	&\leq \frac{C\,\|g\|_\B^2}\mu \left( \alpha + \big(\alpha^{-1}+\lambda\big) \frac{\log^2m}m + \sqrt{\frac{\log(1/\delta)}N} + \sqrt{\frac{\log(1/\delta)}n}\right) + C\,\|g\|_\B\,\log m\\
	&\qquad\qquad + \frac{C}\mu \left(\sqrt{\frac{\log(1/\delta)}n} + \sqrt{\frac{\log(1/\delta)}N}\right)\big(\|a\|_2^2 + \|W\|_F^2 + \|b\|^2\big)^2
\end{align*}
since $E_{Dir}(u^*) \lesssim \|g\|_\B^2$. To minimize the bound, we select $\alpha = \log m / \sqrt m$, and we note that the bound does not become worse under a choice $\lambda = \alpha^{-1} = \sqrt m/\log m$. By assumption, we have
\[
\frac{\|g\|_\B}{\mu_m} \left(\frac{\log m}{\sqrt m} + \sqrt{\frac{\log(1/\delta)}{N_m}} +  \sqrt{\frac{\log(1/\delta)}{n_m}}\right) \leq \log m
\]
for large enough $m$, such that the bound 
\[
\|u_{(a,W,b)}\|_\B \leq \|a\|^2+ \|W\|^2 + \|b\|^2\leq C\,\|g\|_\B\log m + \frac{\sqrt{\log(1/\delta)}}\mu(n^{-1/2}+N^{-1/2})\big(\|a\|^2+ \|W\|^2 + \|b\|^2\big)
\]
 holds for sufficiently large $m$ at the expense of a larger constant $C$. The bound has the structural form $z \leq B+ \eps z^2$, which allows for a dichotomy of solutions
 \[
 z \leq \frac1\eps\left(\frac12 - \sqrt{\frac14 - C\eps}\right) \approx C + 2C^2\eps\qquad\text{or}\quad z \geq \frac1\eps\left(\frac12 + \sqrt{\frac14 - C\eps}\right)\approx \frac1\eps - C.
 \]
 Since the a priori bound
 \[
  \|a\|^2+ \|W\|^2 + \|b\|^2 \leq \frac{\widehat E(a,W,b)}\mu \leq \frac C\mu \leq C\sqrt{m}
 \]
holds for a suitable constant $C$, we can exclude the upper solution interval which would require
 \[
   \|a\|^2+ \|W\|^2 + \|b\|^2 \geq c \frac{1}{\sqrt{m/n} + \sqrt{m/N}} \gg \sqrt{m} \qquad \text{since}\quad n, N\gg m^2.
 \]
 Thus, at the expense of a yet larger constant, we have
 \[
 \|u_{(a,W,b)}\|_\B  \lesssim \|g\|_\B\log m.
 \]

{\bf Step 5a. Closeness based on generalization bounds.} Assume that \eqref{eq deep ritz energy bound} holds for $(a,W,b)$ and that $\|u_{(a,W,b)}\|_\B \leq C\|g\|_\B\log m$. With probability at least $1-2\delta$, we have
\begin{align*}
E_\lambda(u_{(a,W,b)})&\leq \widehat E(a,W,b) + C\|g\|_\B^2\log^2m\left(\sqrt{\frac{\log(1/\delta)}n} + \sqrt{\frac{\log(1/\delta)}N}\right)\\
	&\leq E_{Dir}(u^*) + C\,\|g\|_\B^2 \left(\frac{\log m}{\sqrt m} + \log^2m\sqrt{\frac{\log(1/\delta)}N} + \log^2m \sqrt{\frac 1n}\right)\\
	&\qquad\qquad + C\mu \|g\|_\B \log m\\
	&\leq E_{Dir}(u^*) + C\sqrt{(\log1/\delta)}\,\big(\|g\|_\B^2+1\big) \frac{\log m}{\sqrt m}.
\end{align*}
We conclude by Step 1, using the bound
\begin{align*}
\|u_{(a,W,b)} - u^*\|_{H^1(\Omega)}^2 &\lesssim E_\lambda(u_{(a,W,b)}- E_{Dir}(u^*) + \sqrt{\frac{E_\lambda(u_{(a,W,b)}- E_{Dir}(u^*) }\lambda}\\
	&C\sqrt{(\log1/\delta)}\,\big(\|g\|_\B^2+1\big) \frac{\log m}{\sqrt m}. 
\end{align*}

{\bf Step 5b. Closeness without Barron norm control.}
If the bound 
\[
E_{Dir}(u_{(a,W,b)})  + \lambda\,E_{bd}(u_{(a,W,b)}) \leq E_{Dir}(u^*)
\]
holds, we use Step 1 to deduce that
\begin{align*}
\|\nabla(u_{(a,W,b)} - u^*)\|_{L^2(R)}^2 &\leq E_{Dir}(u_{(a,W,b)}) - E_{Dir}(u^*) + 2\,\|u-u^*\|_{L^2(\partial R)} \|\nabla u^*\|_{L^2(\partial R)}\\
	&\leq  2\,\|u-u^*\|_{L^2(\partial R)} \|\nabla u^*\|_{L^2(\partial R)} - \lambda \,\|u-u^*\|_{L^2(\partial R)}^2\\
	&= \big(2 \|\nabla u^*\|_{L^2(\partial R)} - \lambda\|u-u^*\|_{L^2(\partial R)}\big)\,\|u-u^*\|_{L^2(\partial R)}.
\end{align*}
As the left is non-negative, so is the right, meaning that 
\[
\|u-u^*\|_{L^2(\partial R)} \leq \frac{\|\nabla u^*\|_{L^2(\partial R)}}\lambda.
\]
Inserting this once more in the bound of Step 1, we deduce that
\begin{align*}
\|u_{(a,W,b)} - u^*\|_{H^1(R)}^2 &\leq E_{Dir}(u_{(a,W,b)}) - E_{Dir}(u^*) + 2\,\|u-u^*\|_{L^2(\partial R)} \|\nabla u^*\|_{L^2(\partial R)}\\
	&\leq \frac{2\,\|\nabla u^*\|_{L^2(\partial R)}}\lambda \leq C\,\frac{\|g\|_\B^2}\lambda = C\,\frac{\|g\|_\B^2\,\log m}{\sqrt m}.\qedhere
\end{align*}
\end{proof}

A few observations are in order.

\begin{remark}[Integration points]
We note that the exact same result holds for any choice of integration points $x_i, z_j$ such that the uniform rates of convergence
\[
\max_{\|\nu\|^2+b^2 =1} \left|\int_{\partial R} \sigma(\nu\cdot x+b) - \frac{|\partial R|}n\sum_{j=1}^n \sigma(\nu\cdot z_j+b)\right| \lesssim n^{-1/2}
\]
and
\[
\max_{(\nu_1, b_1), (\nu_2, b_2) \in \R^3\times \R^3} \left| \int_R \chi_{\{\nu_1\cdot x+b_1>0\text{ and }\nu_2\cdot x+b_2>0\}}\dx - \frac{|R|}{N} \sum_{i=1}^N1_{\{\nu_1\cdot x_i+b_1>0\text{ and }\nu_2\cdot x_i+b_2>0\}}\right|\lesssim N^{-1/2}
\]
hold.
\end{remark}

\begin{remark}[Physics-informed neural networks]
While the Deep Ritz method approaches a PDE through its variational characterization, the method of physics-informed neural networks (PINNs) takes a more direct approach of minimizing a computational approximation to a penalty of the strong formulation
\[
E_{PINN}(u) = \int_\Omega \big|\Delta u + f\big|^2\dx + \frac\lambda 2\int_{\partial\Omega} \big|u-g\big|^2\d A.
\]
In Barron spaces, we cannot obtain an analogous result to Theorem \ref{theorem deep ritz} for PINNs since the ReLU activation is non-smooth and solutions $u^*$ with Barron data fail to lie in the natural energy space $H^2(\Omega)$. The direct approximation theorem for Barron functions holds in $H^1(\Omega)$, but not $H^2(\Omega)$. However, we conjecture that analogous methods could be employed using higher order activation functions, building on the results of \cite{vaishampayan2024solving}.
\end{remark}

\section{Proof of Theorem \ref{theorem boundary}: Boundary data in Barron space} \label{section boundary}

\begin{proof}[Proof of Theorem \ref{theorem boundary}]
If $g\in\B(\R^2)$, then trivially $g(x,\bar y)\in \B(\R)$ for any $\bar y\in \R$ by definition since the linear weight of the second variable is absorbed into the bias term. More quantitatively, we see that the estimate $\|g(\cdot, \bar y)\|_{\B(\R)}\leq (1+ |\bar y|)\|g\|_{\B(\R^2)}$ holds. It therefore suffices to prove the opposite inclusion.

By smoothness, functions of the form $(x,y)\mapsto xy$ are locally Barron (but not globally Barron since they grow quadratically at infinity). For us, it suffices that there exists $\tilde h\in \B$ such that $\tilde h(x,y) = xy$ for all $(x,y)\in R$. Now consider
\[
\tilde g(x,y) = g(x,y) + c_{00}\,\frac{a-x}a\cdot\frac{b-y}b + c_{10}\,\frac xa\cdot\frac{b-y}b + c_{01}\,\frac{a-x}a\cdot\frac yb+ c_{11}\,\frac xa \cdot\frac yb.
\]
Since smooth functions are (locally) Barron, we see that $\tilde g\in \B(R)$ if and only if $g\in\B(R)$. Choosing $c_{ij} = -g(ai,bj)$, we may assume without loss of generality that $g(0,0) = g(0,b) = g(a,0) = g(a,b) = 0$. 

Denote $g^y_1(x) = g(x,b)\in \B(\R)$. Without loss of generality we may assume that $g^y_1(x) = 0$ if $x\leq 0$ or $x\geq a$. Effectively a (continuous) continuation by a constant corresponds to setting the derivative to zero beyond a certain point, or equivalently to adding a Dirac mass to the second derivative measure with a weight which is bounded by the Lipschitz constant of the function (and thus, in particular by the Barron norm).

Recall that the function $\tilde h(x,y) =  \frac{y}b \,g^y_1(b\cdot x/y)$ is a Barron function of two variables by the argument presented in the first step of the proof of Lemma \ref{lemma barron regularity}. Furthermore, we see that
\[
\tilde h(x,b) = 1 \cdot g^y_1(1\cdot x) = g^y_1(x) = g(x,b), \qquad \tilde h(0,y) = y/b \cdot g^y_1(0) = 0
\]
and 
\[
\tilde h(a,y) = \frac yb \cdot g^y_1\left(a\,\cdot\frac{b}y\right) = 0
\]
since $b/y\geq 1$ and $g^y_1(x) =0$ if $x\geq a$. Furthermore, note that 
\[
\tilde h(x,y) = \frac{y}b\, g^y_1\left(\frac by\,x\right) = 0
\]
if either $x=0$ and $y>0$ is arbitrary or $y> \frac ba\,x$.
 We can fit data on all four sides of the rectangle by adding four separate Barron functions of two variables in this manner. 
\end{proof}

\section{Discussion: Regularity theory, Deep Ritz, PINNs and finite elements}\label{section conclusion}

We have demonstrated that in general, the unique harmonic function in a domain with boundary values that can be represented by a ReLU network with a single hidden layer is not Lipschitz continuous. This is true even if the boundary condition is given by a neural network with a single neuron in a single hidden layer. The harmonic function is therefore not an element of any function class in which the norm controls the Lipschitz constant, such as Barron spaces or tree-like spaces for multi-layer networks \cite{deep_barron}. Making the neural network deeper and even infinitely wide does not enable it to represent exactly, but only to approximate the harmonic function. Along an approximating sequence of ReLU networks, it does not suffice to make the architecture more complex or increase the number of parameters without also allowing the average magnitude of coefficients to grow to infinity in a fashion which allows the Lipschitz constant to go to infinity. The solution can be approximated highly efficiently with shallow networks of slowly growing Barron norm.

{\bf Implications for Optimization.} Both for the Deep Ritz method and the PINN method, the task is ultimately to minimize an energy functional on Barron space which can be made $\eps$-small with Barron norm $\sim|\log\eps|$. Thus while the loss functionals do not have minimizers, they have exponentially decaying tails. This resembles the situation of classification problems with logistic or cross-entropy loss, which can be efficiently solved by gradient descent methods. In fact, solutions to the gradient flow $\dot z = - e'(z)$ for the simple toy model $e(z) = \exp(-z)$ exactly achieve the $e(z(t)) = O(1/t)$ rate which is guaranteed for gradient flows of convex functions {\em with} minimizers.

Our results can be read to suggest that there are no obstacles to training directly from the approximation theory in the fashion of \cite{dynamic_cod} based on \cite{relutraining, approximationarticle}, but they cannot be taken as positive guarantees that training is possible.

{\bf Benchmark problems and deeper networks.} Given the extremely high degree of approximation at a relatively low Barron norm, it appears that deeper neural networks are not required in this context, at least not from the approximation perspective. In this way, the problem of solving the Poisson equation on a rectangular domain with regular right hand side and Barron boundary data may be `too easy' as a benchmark problem for neural PDE solvers, or at least one in which the greater flexibility of deep networks may not be visible.

On the other hand, it can be mostly ruled out that deep networks have an advantage over two-layer networks from their ability to represent the solution of the PDE. In this sense, the problem provides a non-trivial opportunity to study whether deeper networks enjoy an advantage from the training perspective: If they do better, that would have to be the reason.

In \cite{wojtowytsch2020some}, evidence is presented that deeper neural networks may be beneficial for {\em non-linear} PDEs. 

{\bf Different activation functions.} The spatial gradients of shallow ReLU networks are piecewise constant functions, which makes gradient based training impossible. This lack of regularity can be remedied without changing the underlying function spaces by selecting a smoother activation function such as $\sigma_\delta(z) = \delta\,\log(1+\exp(z/\delta))$ which satisfies
\[
\lim_{\delta\to 0^+} \sigma_\delta = \sigma\quad\text{locally uniformly}, \qquad 
\lim_{\delta\to 0^+} \sigma_\delta'(z) = \sigma'(z)\quad\forall\ z\neq 0, \qquad 
\|\sigma_\delta'\|_\infty \leq 1
\] 
and $\sigma_\delta'' \wto\sigma''$ weakly in the sense of Radon measures. Higher powers of the ReLU activation are smoother and can be studied using similar tools to this article -- see \cite{vaishampayan2024solving}.

\section*{Acknowledgements}

The author gratefully acknowledges support from the National Science Foundation through grant NSF DMS 2424801.

\appendix

\section{Additional proofs}

We conclude with results about Barron space, (fractional order) Sobolev spaces and their relationship, as well as elliptic regularity theory for the Laplacian on a rectangular domain. These results were used in the main text and we believe them to be well-known to the experts, but we present them in detail for the reader's convenience. 

We begin with the compact embedding theorem outlined in Section \ref{section barron review}.

\begin{lemma}\label{compact embedding theorem}
Let $u_n\in \B$ be a sequence of Barron functions such that $\|u_n\|_\B\leq C$. Then there exists $u\in \B$ such that $\|u\|_\B\leq C$ and a subsequence $u_{n_k}$ of $u_n$ such that $u_{n_k}\to u$ as $k\to\infty$:
\begin{enumerate}
\item Uniformly on all compact subsets of $\R^d$,
\item Weakly in $W^{1,p}(\Omega)$ for all bounded open sets $\Omega$ and $p<\infty$, and
\item Strongly in $L^p(\mu)$ for any measure $\mu$ with finite $p$-th moments.
\end{enumerate}
\end{lemma}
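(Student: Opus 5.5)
We argue at the level of the parameter distributions rather than the functions themselves. For each $n$ choose a probability measure $\pi_n$ on $\R\times\R^d\times\R$ with $u_n = f_{\pi_n}$ and $\E_{\pi_n}[|a|\sqrt{\|w\|^2+b^2}]\leq \|u_n\|_\B + 1/n$. By positive homogeneity of $\sigma$ we renormalize each $\pi_n$ so that it is concentrated on $\{|a| = \sqrt{\|w\|^2+b^2}\}$, or equivalently we push it forward to a signed measure $\mu_n$ on the unit sphere $S^d\subseteq\R^{d+1}$ of pairs $(w,b)$, with $u_n(x) = \int_{S^d}\sigma(w\cdot x+b)\,\d\mu_n(w,b)$ and $|\mu_n|(S^d)\leq C+1/n$. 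Since $S^d$ is compact, Banach--Alaoglu (equivalently, Prokhorov applied to the positive and negative parts) yields a subsequence $\mu_{n_k}\wto\mu$ weakly-* as Radon measures, with $|\mu|(S^d)\leq \liminf|\mu_{n_k}|(S^d)\leq C$. We then set $u := \int\sigma(w\cdot x+b)\,\d\mu$, which is a Barron function with $\|u\|_\B\leq |\mu|(S^d)\leq C$, after splitting $\mu$ into positive and negative parts and normalizing to a probability measure with weights $a=\pm|\mu|(S^d)$.

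For claim (1), fix $x$. The map $(w,b)\mapsto\sigma(w\cdot x+b)$ is continuous on $S^d$, so $u_{n_k}(x)\to u(x)$ pointwise. All $u_n$ are Lipschitz with constant at most $C+1$ and satisfy $|u_n(x)|\leq (C+1)(1+\|x\|)$. By Arzel\`a--Ascoli, pointwise convergence together with equicontinuity upgrades to uniform convergence on compact sets. The limit is identified by the pointwise limit, so no further subsequence is needed.

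For claim (2), on a bounded $\Omega$ the $u_{n_k}$ are bounded in $W^{1,\infty}(\Omega)$, so they are bounded in $W^{1,p}(\Omega)$ for every $p<\infty$. Any weakly convergent subsequence must converge to $u$ by the uniform convergence from (1), so the whole subsequence converges weakly in $W^{1,p}$; for $p=1$ we use the $L^\infty$ bound and equi-integrability via Dunford--Pettis.

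For claim (3), we apply dominated convergence: $|u_{n_k}-u|^p\leq (2C+1)^p(1+\|x\|)^p$, which is $\mu$-integrable by the moment assumption, and the integrand tends to $0$ pointwise by (1). The main point requiring care is the compactification step: the parameter space is not compact, and mass could escape to infinity. The homogeneity reduction to the sphere is exactly what prevents this, and we must check that the weak-* limit on $S^d$ still represents $u$ everywhere. This holds because $\sigma(w\cdot x+b)$ is continuous and bounded on $S^d$ for each fixed $x$.
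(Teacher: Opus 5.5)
Your proof is correct, but it runs in the opposite order from the paper's sketch.

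The paper works at the level of functions first. From $|u_n(0)|\le\|u_n\|_\B$ and $[u_n]_{Lip}\le\|u_n\|_\B$ it applies Arzel\`a--Ascoli on balls and weak compactness in $W^{1,p}$, with diagonal arguments over radii $R_n=n$ and exponents $p_n=n$. It then obtains (3) by dominated convergence. Only at the end does it assert $u\in\B$, by appeal to compactness of Radon measures.

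You compactify the parameter space first. This works because the bias is included in the norm, so the representing measures $\mu_n$ live on the compact sphere $S^d$ with $|\mu_n|(S^d)\le C+1/n$. The weak-$*$ limit then gives $\|u\|_\B\le|\mu|(S^d)\le C$ by lower semicontinuity of total variation. Testing against the continuous function $(w,b)\mapsto\sigma(w\cdot x+b)$ identifies $u$ as the pointwise limit.

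What your route buys:
\begin{itemize}
\item It makes explicit the step the paper delegates: that the limit is Barron with norm at most $C$, and that it coincides with the Arzel\`a--Ascoli limit. In the paper's ordering, that coincidence requires extracting a further subsequence and using exactly your pointwise identification.
\item Because the limit is identified, one subsequence serves all compact sets, all bounded $\Omega$ and all $p$, with no diagonal extraction.
\end{itemize}
The paper's ordering, in turn, makes visible that (1)--(3) by themselves use only the uniform Lipschitz and linear growth bounds.

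Two small points of hygiene.
\begin{itemize}
\item Concentrating $\pi_n$ on $\{|a|=\sqrt{\|w\|^2+b^2}\}$ is not by itself what gives compactness. What you actually use is the signed push-forward $\mu_n=\Phi_\#\big(a\sqrt{\|w\|^2+b^2}\,\pi_n\big)$ with $\Phi(a,w,b)=(w,b)/\sqrt{\|w\|^2+b^2}$, discarding neurons with $(w,b)=0$, which contribute nothing.
\item You reuse the letter $\mu$ for the measure in (3). There, ``finite $p$-th moments'' must be read as finiteness of $\int(1+\|x\|)^p$, i.e.\ including finite total mass, since constants are Barron functions. The paper uses the same reading.
\end{itemize}
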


\begin{proof}[Sketch of Proof]
We note that $|u_n(0)| \leq \|u_n\|_\B$ and $[u_n]_{Lip}\leq \|u_n\|_\B$. 

By a corollary to the Arzela-Ascoli theorem, for any $R>0$, the space $C^{0,1}(\overline{B_R(0)}) = W^{1,\infty}(B_R(0))$ embeds compactly into $C^0(\overline {B_R(0)})$ equipped with the supremum norm, i.e.\ for every $R>0$, there exists $u^R \in C^0(\overline{B_R(0)})$ and a subsequence $u_{n_k}$ of $u_n$ such that $u_{n_k}\to u$ uniformly on $\overline{B_R(0)}$. Using a standard diagonal sequence argument and a sequence of growing radii like $R_n = n$, we can show that there exists $u^\infty$ and a subsequence $u_{n_k}$ of $u_n$ such that $u_{n_k}\to u$ uniformly on $\overline{B_{R_n}(0)}$ for every $n\in \mathbb N$ and thus on every compact subset of $\R^n$. 

The uniform limit of Lipschitz functions can easily be seen to be Lipschitz-continuous, i.e.\ $u$ is also Lipschitz-continuous. 

Similarly, $W^{1,\infty}(B_R(0))$ embeds continuously into $W^{1,p}(B_R(0))$ for any $p<\infty$. Since bounded subsets of $W^{1,p}(B_R(0))$ are weakly sequentially compact, we can extract weakly convergent subsequences as well. Additionally, if $v_n \wto v$ in $W^{1,p}(B_R(0))$, then also $v_n\wto v$ in $W^{1,q}(B_R(0))$ for $q<p$. In this way, choosing a sequence of exponents like $p_n= n$, we can use the same diagonal sequence argument also here. 

Assume that $\mu$ is a measure with finite $p$-th moments. Then $u_{n_k}\to u$ pointwise almost anywhere by the first assertion, and
\[
|u_{n_k}(x)| \leq |u_{n_k}(0)| + [u_{n_k}]_{Lip}|x| \leq C(1+|x|).
\]
The claim follows by the dominated convergence theorem. 

It remains to show that $u\in \B$. This follows as e.g.\ in \cite{barron_new} from the compactness theorem for Radon measures \cite[Chapter 1.9]{evans2015measure}.
\end{proof}

Recall that the norm of the Sobolev space $H^s(\R^d)$ of order $s\geq 0$ can be defined as
\[
\int_{\R^d}|\hat f|^2(\xi)\,\big(1+|\xi|^2\big)^{s/2}\d\xi
\]
where $\hat f$ is the Fourier transform of $f:\R^d\to\R$. Other equivalent definitions exist -- see e.g.\ \cite{di2012hitchhikers, leoni2023first} for an introduction to fractional order spaces. On bounded domains, we use the definition
\[
\|u\|_{H^{1/2}(\partial\Omega)} = \|u\|_{L^2(\partial\Omega)} + \inf_{v\in H^1(\Omega) \text{ s.t. } v= u\text{ on }\partial\Omega} \|\nabla v\|_{L^2(\Omega)}
\]
for the fractional Sobolev space on the boundary of a bounded domain $\Omega$. For the boundary equality, consider $v$ on $\partial\Omega$ as the trace of the Sobolev function -- see also \cite[Proposition 14.40 and Theorem 15.20]{leoni2017first} or \cite[Satz 9.40]{dobrowolski2010angewandte} for equivalence.

The key ideas of embeddings between Barron space and fractional Sobolev spaces build on \cite[Section IX]{barron1993universal}. We will choose the normalization for the Fourier transform such that
\begin{equation}\label{eq fourier representation}
f(x) = \int_{\R^d}\hat f(\xi)\,e^{2\pi i\langle \xi,x\rangle}\d\xi.
 \end{equation}

\begin{lemma}\cite[Proposition 4.1 and Lemma 7.1]{barron_boundaries}\label{lemma sobolev embedding}
Let $\Omega\subseteq\R^d$ be a bounded open set. Then for every $f\in H^{d/2+1+s}(\R^d)$ with $s>0$, there exists $F\in \B$ such that $F\equiv f$ in $\Omega$ and
\[
\|F\|_{\B} \leq 2\pi^2\,\omega_d^{1/2}\left(\frac{s+1}{s}\right)^{1/2} \left(8+ {\diam(\Omega)^2}\right)  \|f\|_{H^{d/2+1+s}(\R^d)},
\]
where $\omega_d$ denotes the volume of the unit ball in dimension $d$.
\end{lemma}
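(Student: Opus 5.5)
The plan is to follow Barron's Fourier argument from \cite[Section IX]{barron1993universal}: first write $f$ on $\Omega$ as a superposition of ridge functions, then turn each ridge function into ReLU neurons. Subtracting a harmonic polynomial is not needed. Instead, using the normalization \eqref{eq fourier representation} and assuming $f$ real, we write
\[
f(x) = f(x_0) + \nabla f(x_0)\cdot(x-x_0) + \int_{\R^d} |\hat f(\xi)|\,\big(\cos(2\pi\xi\cdot x+\theta(\xi)) - \cos(2\pi \xi\cdot x_0 + \theta(\xi)) + 2\pi\sin(\dots)\,\xi\cdot(x-x_0)\big)\d\xi
\]
for a base point $x_0\in\Omega$, with a phase $\theta(\xi)$.

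For each frequency $\xi$, the integrand restricted to $\Omega$ is a one-dimensional function $\phi_\xi(t)$ of $t=\xi\cdot(x-x_0)/|\xi|$, with $|t|\leq\diam(\Omega)$. It vanishes together with its derivative at $t=0$. In one dimension, Taylor's formula with integral remainder gives
\[
\phi(t) = \int_0^{D}\phi''(s)\,\sigma(t-s)\ds + \int_{-D}^0\phi''(s)\,\sigma(s-t)\ds .
\]
This represents $\phi_\xi$ on $[-D,D]$ with Barron norm at most $\int_{-D}^D|\phi''(s)|\sqrt{1+s^2}\ds$. Since $|\phi_\xi''|\leq 4\pi^2|\xi|^2$, this is bounded by roughly $4\pi^2|\xi|^2(2D+D^2)$, which accounts for the factor $(8+\diam(\Omega)^2)$.

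The affine part $f(x_0)+\nabla f(x_0)\cdot(x-x_0)$ is handled by writing $z=\sigma(z)-\sigma(-z)$, and its coefficients are controlled by $\int|\hat f|(1+|\xi|)\d\xi$. Superposing the ridge representations over $\xi$ (the Barron norm is convex and lower semicontinuous, so integrating representations is allowed) gives $F\in\B$ with $F\equiv f$ on $\Omega$ and
\[
\|F\|_\B\lesssim (8+\diam(\Omega)^2)\int_{\R^d}|\hat f(\xi)|(1+|\xi|)^2\d\xi .
\]

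The remaining step is Cauchy--Schwarz:
\[
\int|\hat f|(1+|\xi|^2)\d\xi \leq \Big(\int|\hat f|^2(1+|\xi|^2)^{d/2+1+s}\Big)^{1/2}\Big(\int(1+|\xi|^2)^{-d/2-s+1-1}\d\xi\Big)^{1/2}.
\]
In polar coordinates, the second factor is $d\,\omega_d\int_0^\infty r^{d-1}(1+r^2)^{-d/2-s}\dr$, which is at most $\omega_d\,(s+1)/s$ up to absolute constants: the part $r\le1$ contributes $\omega_d$, and the part $r\ge1$ contributes $d\omega_d/(2s)$. Tracking the dimension so that only $\omega_d^{1/2}((s+1)/s)^{1/2}$ appears, rather than an extra factor of $d$, is the delicate bookkeeping. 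Here I would rely on the cited statement's normalization of the $H^s$ norm.

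I expect the main obstacle to be the superposition step. One has to check that the integral over $\xi$ of neuron representations defines a legitimate probability-measure representation with the stated moment bound. The way to do this is to normalize by the total mass $\int|\hat f|(1+|\xi|)^2$ and absorb signs into $a$. One must also make sure the phase and the complex-valued $\hat f$ are handled so that $F$ is real. Matching the explicit constant $2\pi^2$ exactly would require carefully following the bookkeeping of \cite{barron_boundaries}; I would state the bound with that constant as cited.
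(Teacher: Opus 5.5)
Your route is the paper's: a Fourier superposition of ridge functions, a one-dimensional ReLU representation of each ridge profile at cost $\lesssim|\xi|^2(1+D^2)$, then Cauchy--Schwarz. Subtracting the first-order Taylor polynomial is actually more careful than the paper's step (1). That step's bound for $e_\xi$ ignores the affine part of the one-dimensional norm, so it tends to $0$ as $\xi\to0$ although $e_\xi\to1$. But two steps fail, and the statement as written cannot be proved.

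\textbf{The Cauchy--Schwarz exponent is off by one.} With $k=d/2+1+s$, splitting $1+|\xi|^2=(1+|\xi|^2)^{k/2}(1+|\xi|^2)^{1-k/2}$ leaves the factor $\big(\int_{\R^d}(1+|\xi|^2)^{1-d/2-s}\,d\xi\big)^{1/2}$. Its radial integrand behaves like $r^{1-2s}$ at infinity, so it diverges for $s\le1$. The Fourier-moment argument therefore only gives $\|F\|_\B\lesssim\|f\|_{H^{d/2+2+s'}}$ for $s'>0$; your polar-coordinate computation is then correct with $s'$ in place of $s$. This is not a bookkeeping issue. In the standard normalization $\|f\|_{H^k}^2=\int|\hat f|^2(1+|\xi|^2)^k$, which is the one used when the lemma is combined with integer-order norms, the claimed embedding fails for small $s$:
\begin{itemize}
\item In $d=1$, every Barron function has $|F''|(\R)\le\E_\pi|a|\,|w|\le\|F\|_\B$.
\item For $s<\tfrac12$, the space $H^{3/2+s}(\R)$ contains cut-offs of $\sum_k2^{-k(1+\beta)}\cos(2^kx)$ with $\tfrac12+s<\beta<1$. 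The derivative of such a function has Fourier coefficients $\gg1/n$ at $n=2^k$, so it is not of bounded variation on $(0,2\pi)$.
\end{itemize}
The paper's proof has the same slip. In the standard normalization its factor $\int|\hat f|^2(1+|\xi|^2)^{2+\alpha}$ is the squared $H^{2+\alpha}$-norm, not an $H^{1+\alpha/2}$-norm, and $\int(1+|\xi|^2)^{-\alpha}\,d\xi<\infty$ needs $\alpha>d/2$. It also drops the factor $d$ in the area $d\omega_d$ of the unit sphere. So deferring to the cited bookkeeping does not close the gap.

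\textbf{The norm is not translation invariant.} The norm $\E_\pi[|a|\sqrt{\|w\|^2+b^2}]$ penalizes biases. So the cost $\sqrt{1+s^2}$, computed in the variable $t=\hat\xi\cdot(x-x_0)$, is not the cost in $\B$. The neuron $\sigma(t-s)$ is $\sigma(\hat\xi\cdot x-\hat\xi\cdot x_0-s)$, with cost $\sqrt{1+(s+\hat\xi\cdot x_0)^2}$. Likewise the affine part has constant term $f(x_0)-\nabla f(x_0)\cdot x_0$. What you actually get, most simply by taking $x_0=0$ as in Barron's original argument, is a bound in terms of $R=\sup_{x\in\Omega}|x|$. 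The quantity $\diam(\Omega)$ cannot replace $R$:
\begin{itemize}
\item For a single neuron, $x\cdot\nabla\phi-\phi=-b\,1_{\{w\cdot x+b>0\}}$, hence $|x\cdot\nabla F-F|\le\|F\|_\B$ almost everywhere.
\item If $F=x_1-L$ on $\Omega=B_1(Le_1)$, then $x\cdot\nabla F-F\equiv L$ there. Here $f$ may be a translate of one fixed $C_c^\infty$ function, and $\diam\Omega=2$, yet $\|F\|_\B\ge L$.
\end{itemize}
This is harmless when $0\in\overline\Omega$, as for the rectangles and $B_2(0)$ used in the paper, but the paper's step (2) has the same defect. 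What your argument does prove is $\|F\|_\B\lesssim(1+R^2)\,\omega_d^{1/2}\big(1+\tfrac{d}{2(s-1)}\big)^{1/2}\|f\|_{H^{d/2+1+s}(\R^d)}$ for $s>1$, with an absolute implied constant.
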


\begin{proof}
By \cite[Lemma 7.1]{barron_boundaries}, we have
\[
\|f\|_\B \leq C\int_{\R^d} |\hat f|(\xi)\big(1+|\xi|^2\big)\d\xi.
\] 
This can be proved as follows:

\begin{enumerate}
\item The function $e_R(z) = \exp(-2\pi i Rz)$ is a (complex-valued) Barron function on an interval $[-A, A]$ since
\[
\int_{-A}^A |e_R''|(z)\,\sqrt{1+z^2}\dz = (2\pi R)^2 \int_{-A}^A \sqrt{1+z^2}\dz \leq 2 (2\pi R)^2 \left(1 + \frac{A^2}2\right).
\]
It can be extended as a linear function outside $[-A,A]$ without increasing the norm.

\item By the same argument, for any $\xi\in \R^d$, the function $e_\xi:\R^d\to\R$, $e_\xi(x) = \exp (-2\pi i \langle \xi, x\rangle)$ is a (complex-valued) Barron function on the {\em bounded} set $\Omega$ and norm at most 
\[
\|e_\xi\|_\B \leq 8\pi^2 \,\|\xi\|^2\left(1 + \frac{\diam(\Omega)^2}8\right).
\] 

\item On $\Omega$, the function $f$ can be represented as a superposition of the functions $e_\xi$ as in \eqref{eq fourier representation}.
\end{enumerate}

Technical details can be found in the reference. The integral on the right can be estimated by
\begin{align*}
\int |\hat f|(\xi)\big(1+|\xi|^2\big)\d\xi &= \int |\hat f|(\xi)\big(1+|\xi|^2\big)^{1+\alpha/2-\alpha/2}\d\xi\\
	&\leq \left(\int |\hat f|^2(\xi)\big(1+|\xi|^2\big)^{2+\alpha}\d\xi\right)^\frac12 \left(\int (1+|\xi|^2)^{-\alpha}\d\xi\right)^\frac12\\
	&= [f]_{H^{1+\alpha/2}} \left(\omega_d\int_0^\infty \frac{r^{d-1}}{(1+r^2)^\alpha}\dr\right)^\frac12\\
	&\leq \omega_d^{1/2}  [f]_{H^{1+\alpha/2}}\left(1+ \int_1^\infty r^{d-1-\alpha}\dr\right)^\frac12\\
	&\leq \sqrt{\omega_d+\frac{\omega_d}{\alpha-d}} \,[f]_{H^{1+\alpha/2}},
\end{align*}
which is finite if $\alpha>d$.
\end{proof}

In order to work primarily with integer order Sobolev spaces, we recall the following interpolation theorem.

\begin{lemma}\label{lemma sobolev interpolation}
Let $s\in (s_1, s_2)$. Then
\[
\|f\|_{H^s(\R^d)} \leq \|f\|_{H^{s_1}}^{\frac{s_2-s}{s_2-s_1}}\|f\|_{H^{s_2}}^{\frac{s-s_1}{s_2-s_1}}\qquad \forall\ f\in H^s(\R^d).
\]
\end{lemma}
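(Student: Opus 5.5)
The plan is to work entirely on the Fourier side, where the $H^s$ norm is a weighted $L^2$ norm of $\hat f$, and to apply H\"older's inequality with respect to the measure $\d\xi$. Write $w(\xi) = 1+|\xi|^2$, and suppose the norm is normalized as $\|f\|_{H^s}^2 = \int |\hat f|^2 w^{s}\d\xi$. The exact power of $w$ attached to the order $s$ does not matter, as long as it is linear in $s$. Set $\theta = \frac{s-s_1}{s_2-s_1}\in(0,1)$, so that $s = (1-\theta)s_1 + \theta s_2$. The point is that $w^s = w^{(1-\theta)s_1}\,w^{\theta s_2}$ splits multiplicatively with exponents summing to one.

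The main step is to factor the integrand as
\[
|\hat f|^2 w^s = \big(|\hat f|^2 w^{s_1}\big)^{1-\theta}\big(|\hat f|^2 w^{s_2}\big)^{\theta}.
\]
Then apply H\"older's inequality with conjugate exponents $p = 1/(1-\theta)$ and $p' = 1/\theta$. This gives
\[
\|f\|_{H^s}^2 \leq \Big(\int |\hat f|^2w^{s_1}\d\xi\Big)^{1-\theta}\Big(\int|\hat f|^2 w^{s_2}\d\xi\Big)^{\theta} = \|f\|_{H^{s_1}}^{2(1-\theta)}\|f\|_{H^{s_2}}^{2\theta}.
\]
Taking square roots yields the claim, since $1-\theta = \frac{s_2-s}{s_2-s_1}$ and $\theta = \frac{s-s_1}{s_2-s_1}$.

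The only thing to check is well-definedness, which is the closest thing to an obstacle. If $f\in H^{s_2}$, everything on the right-hand side is finite. If $f\in H^s$ but $f\notin H^{s_2}$, the right-hand side equals $+\infty$ (the $H^{s_1}$ factor is finite and nonzero unless $f=0$), so the inequality holds trivially. No density argument is needed, because the inequality is pointwise in $\xi$ before integration. The constant is exactly one because the normalization of the Fourier norm is used consistently for all three orders.
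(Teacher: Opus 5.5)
Your proof is correct and is essentially the paper's argument: the paper also applies H\"older's inequality on the Fourier side with conjugate exponents $1/\gamma$ and $1/(1-\gamma)$, where $\gamma = \frac{s_2-s}{s_2-s_1}$ is your $1-\theta$. The only cosmetic difference is that the paper splits the weight through an auxiliary parameter $\alpha = \frac{s_1}{s}\gamma$ and solves for it afterwards, whereas your direct factorization $w^s = w^{(1-\theta)s_1}w^{\theta s_2}$ reaches the same inequality without that bookkeeping and without dividing by $s$.
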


\begin{proof}
Let $\gamma = \frac{s_2-s}{s_2-s_1}\in(0,1)$ and note that $1-\gamma = \frac{s-s_1}{s_2-s_1}\in(0,1)$. Then
\begin{align*}
\|f\|_{H^s(\R^d)}^2 &= \int_{\R^d} (1+|\xi|^2)^{s}\,|\hat f|^2(\xi)\,\d\xi\\
	&= \int_{\R^d}(1+|\xi|^2)^{\alpha s}\,|\hat f|^{2\gamma}(\xi) \cdot (1+|\xi|^2)^{(1-\alpha)s}\,|\hat f|^{2(1-\gamma)}(\xi)\,\d\xi\\
	&\leq \left( \int_{\R^d}(1+|\xi|^2)^{\alpha s/\gamma}\,|\hat f|^{2}(\xi)\,\d\xi\right)^\gamma \left( \int_{\R^d}(1+|\xi|^2)^{(1-\alpha)s/(1-\gamma)}\,|\hat f|^{2}(\xi)\,\d\xi\right)^{1-\gamma}\\
	& = \|f\|_{H^{\alpha s/\gamma}}^{2\gamma} \|f\|_{H^{(1-\alpha) s/(1-\gamma)}}^{2(1-\gamma)}.
\end{align*}
Select $\alpha = \frac{s_1}s\gamma\in(0,\gamma)\subset (0,1)$ and note that $\frac{\alpha s}{\gamma} = \frac{\frac{s_1}s\gamma\,s}\gamma = s_1$ and
\[
 \frac{(1-\alpha)s}{1-\gamma} 
	= \frac{\frac{s-s_1\gamma}s\,s}{\frac{s-s_1}{s_2-s_1}} 
	= \frac{(s_2-s_1)\left(s-s_1\,\frac{s_2-s}{s_2-s_1}\right)}{s-s_1}
	= \frac{s(s_2-s_1) - s_1(s_2-s)}{s-s_1} = \frac{s_2(s-s_1)}{s-s_1} = s_2.\qedhere
\]
\end{proof}

In particular, if $s\in(2,3)$, then
\[
\|f\|_{H^s(\R^d)} \leq \|f\|_{H^2(\R^d)}^{3-s}\|f\|_{H^3(\R^d)}^{s-2}\qquad \forall\ f\in H^s(\R^d).
\]
As we only have access to a function on a domain $\Omega$ at times, we employ the following degree-independent Sobolev extension theorem for domains which are merely Lipschitz-regular.

%Elias Stein
\begin{theorem}\cite[Chapitre VI, 4.3, Cinqui\`eme Th\'eor\`eme]{stein1967integrales}
Let $\Omega\subseteq\R^d$ be a bounded Lipschitz-domain. There exists a linear extension operator $T$ such that for all $k\in \N$ and $1\leq p\leq \infty$ the operator $T:W^{k,p}(\Omega) \to W^{k,p}(\R^d)$ is continuous with a norm bound which depends on $n, k, p$ and the Lipschitz-constant of the domain $\Omega$.
\end{theorem}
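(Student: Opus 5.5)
The plan is to follow Stein's construction of a universal extension operator. A single operator $T$, built without reference to $k$ or $p$, is checked to be bounded $W^{k,p}(\Omega)\to W^{k,p}(\R^d)$ for every $k$ and $1\leq p\leq\infty$. The first step is localization. Since $\Omega$ is a bounded Lipschitz domain, finitely many open sets $U_1,\dots,U_M$ cover $\partial\Omega$, and after a rotation each $\Omega\cap U_j$ coincides with $U_j\cap\{x_d>\varphi_j(x')\}$ for a Lipschitz function $\varphi_j$ with $[\varphi_j]_{Lip}\leq L$. We add an open set $U_0\Subset\Omega$ and take a smooth partition of unity $\{\eta_j\}$ subordinate to $\{U_j\}$. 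We then set $Tf=\eta_0 f+\sum_{j\geq1}T_j(\eta_j f)$, where $T_j$ is an extension operator for the \emph{special Lipschitz domain} $D_j=\{x_d>\varphi_j(x')\}$. Multiplication by the fixed $\eta_j$ is bounded on every $W^{k,p}$ with constants depending on $k$. Hence it suffices to build, for a special Lipschitz domain $D=\{x_d>\varphi(x')\}$, one linear operator $\mathfrak E$ that is bounded $W^{k,p}(D)\to W^{k,p}(\R^d)$ for all $k,p$, with norm depending only on $k,p,d,L$.

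Two ingredients enter the construction on $D$. The first is a \emph{regularized distance} $\delta^*$ on $\R^d\setminus\overline D$. It should be smooth, comparable to $\dist(x,\partial D)$ (equivalently to $\varphi(x')-x_d$), and satisfy $|\partial^\alpha\delta^*(x)|\leq C_\alpha\,\delta^*(x)^{1-|\alpha|}$. We obtain it from a Whitney cube decomposition of the complement and a smooth partition of unity adapted to the cubes, and then scale it so that $\delta^*\geq 2(\varphi(x')-x_d)$. The second ingredient is a continuous weight $\psi$ on $[1,\infty)$. It must decay faster than any power, and its moments must be $\int_1^\infty\psi=1$ and $\int_1^\infty\lambda^k\psi(\lambda)\,\d\lambda=0$ for all $k\geq1$. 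We use Stein's explicit choice $\psi(\lambda)=\frac{e}{\pi\lambda}\,\mathrm{Im}\big(e^{-\omega(\lambda-1)^{1/4}}\big)$ with $\omega=e^{-i\pi/4}$. Its moment identities follow from a contour-integral computation of $\int_0^\infty t^k\,\mathrm{Im}(e^{-\omega t^{1/4}})\,\d t=0$. We then define, for $x\notin\overline D$,
\[
\mathfrak E f(x',x_d)=\int_1^\infty f\big(x',\,x_d+\lambda\,\delta^*(x)\big)\,\psi(\lambda)\,\d\lambda ,
\]
and $\mathfrak E f=f$ on $D$. The choice $\delta^*\geq2(\varphi-x_d)$ guarantees that every point $x_d+\lambda\delta^*$ lies in $D$. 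Because $\delta^*$ involves neither $k$ nor $p$, and the moments of $\psi$ vanish to all orders, the operator is the same for every $k$ and $p$.

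The estimates are done first for $f\in C^\infty(\overline D)$ with bounded support, and the general case follows by density, which holds for Lipschitz domains via translation in the $x_d$ direction. For $k=0$ the bound follows from Minkowski's integral inequality, the rapid decay of $\psi$, and the fact that $x_d\mapsto x_d+\lambda\delta^*(x)$ has Jacobian comparable to $\lambda$. Continuity across $\partial D$ uses $\int\psi=1$. The main obstacle is the derivative bound. Differentiating $\partial^\alpha$ under the integral produces terms in which derivatives of $\delta^*$ of order up to $|\alpha|$ multiply derivatives of $f$ of lower order. These factors blow up like $\delta^{*\,1-j}$ near $\partial D$. To control them, we Taylor-expand $f(x',x_d+\lambda\delta^*)$ in the vertical variable about the point $x_d+\delta^*$, to order $k-1$. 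The vanishing moments $\int\lambda^j\psi=0$ kill every polynomial term. The remainder is then expressed through $\partial_d^k f$ by an integral with the right power of $(\lambda-1)\delta^*$, and this power exactly compensates the singular factors $\delta^{*\,1-j}$. The resulting one-dimensional averaging operators along vertical lines are bounded on $L^p$ for all $1\leq p\leq\infty$ by Hardy's inequality. Summing the pieces gives $\|\partial^\alpha\mathfrak Ef\|_{L^p(\R^d\setminus D)}\leq C(k,p,d,L)\,\|f\|_{W^{k,p}(D)}$.

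Finally, we check that the extended function has no distributional singular part on $\partial D$. This holds because $\mathfrak Ef$ and its derivatives up to order $k-1$ are continuous across the graph, again by the normalization of $\psi$. Reassembling the localized operators completes the proof.
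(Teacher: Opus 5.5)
The paper does not prove this statement. It quotes Stein and says that even a sketch is beyond its scope, so the benchmark is Stein's own argument, and your outline follows it: localization to special Lipschitz domains, a regularized distance with $\delta^*\geq 2(\varphi(x')-x_d)$, Stein's weight $\psi$, and Taylor expansion of the lower-order terms.

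\textbf{The step that fails.} Your $k=0$ bound, and with it the bound for the top-order terms $\int_1^\infty|\nabla^k f(x',x_d+\lambda\delta^*(x))|\,\lambda^k|\psi(\lambda)|\,\d\lambda$, rests on Minkowski's inequality in $\lambda$ plus the claim that $x_d\mapsto x_d+\lambda\delta^*(x',x_d)$ has Jacobian comparable to $\lambda$. For a Whitney-based regularized distance this claim is false:
\begin{itemize}
\item $\partial_d\delta^*=0$ on the core of every Whitney cube, so the Jacobian there is exactly $1$.
\item $\delta^*\geq 2(\varphi(x')-x_d)$ and $\delta^*\to 0$ at $\partial D$ force $\partial_d\delta^*\leq -2$ somewhere on the vertical segment joining any core point to $\partial D$.
\item By continuity, $1+\lambda\,\partial_d\delta^*$ therefore vanishes somewhere for every $\lambda\geq 1$.
\end{itemize}
So the map folds, and composition with it is in general not bounded on $L^p$ for $p<\infty$.

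\textbf{The repair} is what Stein does: fix $x$ and substitute $s=x_d+\lambda\delta^*(x)$ in the $\lambda$-integral. Set $t=\varphi(x')-x_d$ and $\tau=s-\varphi(x')$, and use $2t\leq\delta^*(x)\leq Ct$ and $|\psi(\lambda)|\leq C_N\lambda^{-N}$. This gives $|\mathfrak Ef(x)|\leq C\int_t^\infty|f(x',\varphi(x')+\tau)|\,t^{N-1}\tau^{-N}\,\d\tau$. The kernel is positive, homogeneous of degree $-1$, and supported in $\{\tau>t\}$. After the dilation $\tau=t\sigma$, Minkowski's inequality bounds it on $L^p(\d t)$ by $\int_1^\infty\sigma^{-N-1/p}\,\d\sigma<\infty$ for every $1\leq p\leq\infty$, and Fubini in $x'$ finishes. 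The top-order terms reduce to the same kernel, and so do your Taylor remainders $\frac{1}{\delta^*}\int_{x_d+\delta^*}^{x_d+\lambda\delta^*}|\nabla^kf(x',s)|\,\d s$ once you exchange the order of integration. No change of variables in $x_d$ is ever needed.

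\textbf{Smaller points to fix.}
\begin{itemize}
\item \emph{Localization.} $T_j(\eta_jf)$ need not vanish on $\Omega\setminus U_j$: for a point of $\Omega\setminus U_j$ lying below the graph of $\varphi_j$, the vertical ray used by $T_j$ can cross $\operatorname{supp}\eta_j$. You need an outer cutoff, $Tf=\eta_0f+\sum_{j\geq1}\tilde\eta_j\,T_j(\eta_jf)$, with $\tilde\eta_j\in C_c^\infty(U_j)$ and $\tilde\eta_j\equiv 1$ on $\operatorname{supp}\eta_j$.
\item \emph{Why the moments kill the Taylor polynomials.} The moments $\int_1^\infty\lambda^j\psi\,\d\lambda=0$, $j\geq1$, do this only because every term containing a derivative of $\delta^*$ carries a factor $\lambda^m$ with $m\geq1$, coming from the chain rule through $\lambda\delta^*$. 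So you must expand $\partial^\beta f$, to order $k-|\beta|-1$, inside each such term, not $f$ itself, since $\int_1^\infty(\lambda-1)^j\psi\,\d\lambda=(-1)^j\neq 0$.
\item \emph{Consequences of that expansion.} The remainders involve all $k$-th order derivatives, not only $\partial_d^kf$. Also, it is these vanishing moments, not merely $\int\psi=1$, that make $\partial^\alpha\mathfrak Ef$ match $\partial^\alpha f$ on $\partial D$ for $1\leq|\alpha|\leq k-1$.
\item \emph{The case $p=\infty$.} $C^\infty(\overline D)$ is not dense in $W^{k,\infty}(D)$, so the density step must be replaced by approximation with uniform $W^{k,\infty}$ bounds and a weak-$*$ limit.
\end{itemize}

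Finally, your reassembly makes the constant depend on the chart cover and the partition of unity, not only on the Lipschitz constant. This is unavoidable: for $kp>d$ and $p<\infty$, the constant function $1$ on $r\Omega$ shows that any extension operator for $r\Omega$ has norm $\gtrsim r^{-d/p}$. The dependence asserted in the quoted statement has to be read in that sense.
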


Providing even a sketch of the proof is unfortunately beyond the scope of this exposition. By Stein's extension theorem and Sobolev space interpolation, we can extend a function $u\in H^2(\Omega)\cap H^3(\Omega)$ to $Tu:\R^d\to\R$ such that
\[
\|Tu\|_{H^s(\R^d)} \leq \|Tu\|_{H^3(\R^d)}^{s-2}\|Tu\|_{H^2(\R^d)}^{3-s} \leq C\,\|u\|_{H^3(\Omega)}^{s-2}\|u\|_{H^2(\Omega)}^{3-s}. 
\]
Next, we present a higher order regularity theorem for the Laplace operator on a rectangular domain.

\begin{lemma}\label{lemma regularity cube}
Let $\Omega=\prod_{i=1}^d(0, a_i)$ be a $d$-dimensional cuboid and $f\in H^k(\Omega)$, $g\in H^{k+2}(\Omega)$ where $k\in \N_0$ and $H^0 = L^2$. Consider the solution $u$ of the PDE
\[
\begin{pde} 
\Delta u &= f &\text{in }\Omega\\
u &= g &\text{on }\partial \Omega.
\end{pde}
\]
Then $\|u\|_{H^{k+2}(\Omega)} \lesssim \|f\|_{H^k(\Omega)} + \|g\|_{H^{k+2}(\Omega)}$.
\end{lemma}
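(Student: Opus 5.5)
The plan is to reduce to homogeneous boundary data, prove the case $k=0$ with the classical convex-domain theory, and obtain higher regularity by induction using odd reflections across the faces of $\Omega$. \emph{Reduction.} Set $w:=u-g$. Then $w\in H_0^1(\Omega)$ and $\Delta w = f-\Delta g =: F$ with $\|F\|_{H^k(\Omega)}\leq \|f\|_{H^k}+\|g\|_{H^{k+2}}$. It therefore suffices to prove
\[
\|w\|_{H^{k+2}(\Omega)}\lesssim \|F\|_{H^k(\Omega)}
\]
for the homogeneous problem, and then add back $\|g\|_{H^{k+2}}$.

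\emph{Base case $k=0$.} Since $\Omega$ is a cuboid, it is convex. The classical $H^2$-regularity for the Dirichlet Laplacian on convex domains \cite{grisvard2011elliptic} gives $\|w\|_{H^2}\lesssim\|F\|_{L^2}$. For a cuboid this can also be seen by hand. Expand in the Dirichlet eigenbasis $\phi_n(x)=\prod_i\sin(\pi n_ix_i/a_i)$ with eigenvalues $\lambda_n=\sum_i(\pi n_i/a_i)^2$. If $F=\sum F_n\phi_n$, then $w=-\sum \lambda_n^{-1}F_n\phi_n$. Every second derivative $\partial_i\partial_j w$ has coefficients bounded by $|F_n|$ in an orthogonal (sine/cosine) system, so Parseval gives the bound. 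Equivalently, one may extend $w$ and $F$ oddly across all faces to periodic functions on the torus $\prod_i \R/2a_i\mathbb Z$ and use the Fourier multiplier estimate there.

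\emph{Induction step $k\to k+1$.} I would first differentiate tangentially. For a fixed direction $j$, the function $\partial_j w$ vanishes on every face $\{x_i=0\}$ or $\{x_i=a_i\}$ with $i\neq j$, since it is a tangential derivative of zero boundary data there. On the faces $\{x_j=0,a_j\}$ I would use the equation instead: there $\partial_j^2 w = F-\sum_{i\neq j}\partial_i^2 w = F$, because the tangential second derivatives vanish. So $\partial_j^2w$ has the boundary trace of $F$. Applying the base case to suitable combinations such as $\partial_j^2 w$, and bounding mixed derivatives $\partial_i\partial_j$ by the tangential argument, should give $\|D^{k+3}w\|_{L^2}\lesssim\|F\|_{H^{k+1}}+\|w\|_{H^{k+2}}$. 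The induction hypothesis then closes the estimate. Away from the edges and corners, this is the standard flat-boundary difference-quotient argument together with interior regularity, so only neighbourhoods of the lower-dimensional strata need real work.

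\emph{Main obstacle.} The hard step is the behaviour at edges and corners, where two faces meet at a right angle. The odd reflection of $F$ lies in $H^k$ of the torus only if the traces of $F$ and of its even normal derivatives are compatible at these strata. For example, at an edge the trace of $F$ must be consistent with $\partial_1^2w=\partial_2^2w=0$ there. Without such conditions, the sine coefficients of $F$ decay too slowly and corner singularities of the form $r^2\log r\,\sin 2\theta$ can appear. My first attempt would be to subtract explicit corner correctors, namely polynomials or harmonic functions of the form $r^{2m}(\log r\,\sin 2m\theta+\dots)$ near each edge, that match the incompatible part of $F$. Their $H^{k+2}$ norms would then be controlled by traces of $F$, hence by $\|F\|_{H^k}$ via the trace theorem. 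After that, the reflection argument applies to the remainder. I expect verifying that these correctors lie in $H^{k+2}$, and do not lose a logarithm, to be the delicate point of the whole proof.
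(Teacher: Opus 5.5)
Your reduction to $w=u-g$ and your base case $k=0$ are correct; the sine-series/Parseval computation is the paper's argument, and for $k=0$ it is sound. The gap is the last step of the induction, and it cannot be closed. The corner correctors are \emph{not} in $H^{k+2}$, and in fact the lemma is false for every $k\ge1$ once $d\ge2$. Your own observation, that $\partial_j^2w$ equals $F$ on one face and $0$ on the adjacent one, already shows this.

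Take $d=2$, $k=1$, $f\equiv1$, $g\equiv0$. Suppose $u\in H^3(\Omega)$. Then $\partial_1^2u\in H^1(\Omega)$ has trace $0$ on $\{x_2=0\}$, being a tangential derivative of zero boundary data. So $v:=\partial_2^2u=1-\partial_1^2u$ has trace $1$ there, while on the adjacent face $\{x_1=0\}$ it has trace $0$. Traces of $H^1(\Omega)$-functions lie in $H^{1/2}(\partial\Omega)$, but near the corner the Gagliardo seminorm of this trace is at least
\[
\int_0^\delta\int_0^\delta\frac{\mathrm{d}s\,\mathrm{d}t}{s^2+t^2}=+\infty .
\]
Hence $u\notin H^3(\Omega)$ although the data are smooth. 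Since $H^{k+2}\subset H^3$, every $k\ge1$ fails, and the same argument works along any edge if $d\ge3$.

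Concretely, in polar coordinates at the corner,
\[
u=\tfrac12x_2^2+\tfrac1\pi r^2(\log r\,\sin2\theta+\theta\cos2\theta)+h,
\]
where $h$ is harmonic and vanishes on both sides, hence smooth by two odd reflections. The middle term has third derivatives of size $\sim 1/r$, which are not square integrable in two dimensions. The logarithm you feared losing is genuinely lost. It is a property of $u$ itself, so no choice of correctors can repair it.

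The paper's proof misses this because it asserts that $\sum_n\lambda_n^k\alpha_n(v)^2$ is an equivalent norm on all of $H^k(\Omega)$. From this it concludes $\|u-g\|_{H^{k+2}}^2=\sum_n\lambda_n^k\beta_n^2=\|f-\Delta g\|_{H^k}^2$. That equivalence holds for $k=0$. For $k\ge1$ it holds only on functions whose odd periodic reflection lies in $H^k$, i.e.\ precisely under the boundary compatibility conditions you identified; for instance $v\equiv1$ gives $\sum_n\lambda_n\alpha_n(v)^2=+\infty$.

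So neither your plan nor the paper's argument can establish the lemma as stated. What is true is the case $k=0$. For $k\ge1$ one needs additional compatibility hypotheses on $f-\Delta g$ at the corners (edges when $d\ge3$), as in the classical regularity theory on polygons; for $k=1$ this essentially means that $f-\Delta g$ vanishes there.
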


Theorems of this type are classical and well-known for domains with $C^k$-smooth boundary \cite[Theorem 8.12]{gilbarg2015elliptic}. By contrast, even $H^2$-regularity is known to fail in less regular domains. For instance, if we define in polar coordinates 
\[
\Omega_\alpha = \{x\in\R^2 : 0<r<1, \:0 < \theta < \alpha\}, \qquad u_\alpha(r,\theta) = r^{\pi/\alpha} \sin\left(\frac{\pi}\alpha\,\theta\right)
\]
then
\[
\Delta u_\alpha = \left(\partial_r\partial_r + \frac1r\,\partial_r + \frac1{r^2}\partial_\theta\partial_\theta\right) u_\alpha = \left\{\frac\pi\alpha\left(\frac\pi\alpha-1\right) +\frac\pi\alpha - \left(\frac{\pi}{\alpha}\right)^2\right\}r^{\pi/\alpha\,-2}\sin\left(\frac{\pi}\alpha \,\theta\right) =0.
\]
The domain $\Omega_\alpha$ is convex if and only if $\alpha \leq \pi$. Since $u_\alpha$ is positively homogeneous of degree $\pi/\alpha$, the tensor of $k$-th derivatives of $u_\alpha$ is positively homogeneous of degree $\pi/\alpha - k$. This is $p$-integrable on $\Omega_\alpha \subset \R^2$ if and only if $p(\pi/\alpha - k)> -2$ (unless $u_\alpha$ is a homogeneous polynomial and the derivatives are zero). If $\frac\pi\alpha$ is not an integer, then $u_\alpha \in H^k(\Omega_\alpha)$ if and only if $\frac\pi\alpha >  k - \frac{2}p$. In particular, for $\pi/2 < \alpha <\pi$, we find that $u_\alpha\notin H^3(\Omega_\alpha)$. As $\alpha\nearrow 2\pi$, we find that $u_\alpha \in W^{2,4/3}(\Omega_\alpha)$ (and by a similar consideration that $u_\alpha \in H^{3/2}$). $H^2$-regularity clearly fails.

By construction $u_\alpha \equiv 0$ on the straight segments of the boundary. This means that $u_\alpha \equiv \tilde u_{\alpha, m} = r^m u_\alpha$ on $\partial\Omega_\alpha$. Choosing $m$ large, we can make $\tilde u_{\alpha,m}$ as smooth as we desire. This suggests to us that the method of proof of Lemma \ref{lemma rectangle} as it uses Lemma \ref{lemma regularity cube} may be fragile even when passing to a non-rectangular parallelogram. Similarly, we anticipate additional technical difficulty when considering on a rectangular domain a general constant coefficient second order elliptic differential operator $L = - a^{ij}\partial_i\partial_j$ whose symbol $a^{ij}\xi_i\xi_j$ has eigenvectors which are not parallel to the coordinate axes.

Cuboid domains are special for the Laplacian since its eigenfunctions can be chosen to be separable (i.e.\ they decompose into a product of functions of the $d$ orthogonal coordinates $x_1,\dots, x_d$. We believe Lemma \ref{lemma regularity cube} to be well-known to the experts, but have been unable to locate a reference for it.

\begin{proof}[Proof of Lemma \ref{lemma regularity cube}]
We note that $\tilde u = u-g$ satisfies $\tilde u \in H_0^1(\Omega)$ and $\Delta \tilde u = f-\Delta g\in H^k(\Omega)$, i.e.\ it suffices to consider the inhomogeneous Poisson equation with homogeneous Dirichlet boundary condition.\footnote{\ We prescribe the boundary condition by requiring that $u- g\in H_0^1$ or equivalently that the traces of $u$ and $g$ on $\partial\Omega$ coincide.}\ We can expand $\tilde u, f-\Delta g$ in a basis of eigenfunctions of the Dirichlet Laplacian:
\begin{align*}
\tilde u = \sum_{(n_1,\dots,n_d)\in \N^d} \alpha_{n_1,\dots,n_d}\,u_{n_1,\dots,n_d}(x), \qquad f-\Delta g = \sum_{(n_1,\dots,n_d)\in \N^d} \beta_{n_1,\dots,n_d}\,u_{n_1,\dots,n_d}(x)
\end{align*}
The eigenfunctions
\[
u_{n_1,\dots,n_d}(x) = \sin\left(n_1\,\frac{\pi x_1}{a_1}\right) \dots\sin\left(n_d\,\frac{\pi x_d}{a_d}\right), \qquad -\Delta u_{n_1,\dots,n_d} = \pi^2 \sum_{i=1}^d \left(\frac{n_i}{a_i}\right)^2\,u_{n_1,\dots,n_d}
\]
are orthogonal in $H^k$ for all $k\in\N_0$ and infinitely differentiable up to the boundary:
\[
\frac{\partial^{|k|}}{\partial x_1^{k_1}\dots \partial_{x_d}^{k_d}} u_{n_1,\dots,n_d}(x) = \pi^{|k|} \prod_{i=1}^d\left(\frac{n_i}{a_i}\right)^{k_i} \,\sin^{(k_i)}\left(n_d\,\frac{\pi x_i}{a_i}\right)\qquad\forall\ k\in \N^d.
\]
The derivatives $\sin^{(k_i)}\in \{\pm \sin, \pm \cos\}$ are bounded by $1$ in $L^\infty$. From now on, we denote $n=(n_1,\dots,n_d)\in\N^d$ and $\lambda_n = \pi^2\sum_{i=1}^d(n_i/a_i)^2$. In this notation, we observe that
\[
\left\|\frac{\partial^{|k|}}{\partial x_1^{k_1}\dots \partial_{x_d}^{k_d}} u\right\|_{L^\infty(\Omega)} \leq \pi^{|k|}\prod_{i=1}^d\left(\frac{n_i}{a_i}\right)^{k_i}  \leq \lambda_n^{k/2}.
\] 
Based on this bound and orthogonality properties, it is possible to introduce an equivalent norm on $H^k(\Omega)$ by describing $v$ in terms of its expansion coefficients $\alpha_n(v)$:
\[
\|v\|_{H^k}^2 = \sum_{n\in\N^d} \lambda_n^{k}\,\alpha_n(v)^2 \qquad\Ra\quad H^k(\Omega) = \left\{v : \sum_{n\in\N^d}\|n\|^{2k}\,\alpha_n^2(v)<\infty\right\}.
\]
Referring back to the problem at hand, we find that $\alpha_n = \beta_n/\|n\|^2$. In particular
\begin{align*}
\|u-g\|_{H^{k+2}(\Omega)}^2 &= \sum_{n\in\N^d} \lambda_n^{k+2}\,\alpha_n^2 =\sum_{n\in\N^d} \lambda_n^{k+2}\left(\frac{\beta_n}{\lambda_n}\right)^2 = \sum_{n\in\N^d} \lambda_n^k\,\beta_n^2 = \|f-\Delta g\|_{H^k(\Omega)}^2\\
	& \leq 2\big( \|f\|_{H^k(\Omega)}^2 + \|g\|_{H^{k+2}(\Omega)}^2\big)
\end{align*}
and $\|u\|_{H^{k+2}} \leq \|g\|_{H^{k+2}} + \|u-g\|_{H^{k+2}}$. This concludes the proof.
\end{proof}

Finally, we demonstrate that for a two-dimensional rectangle $\Omega = (0,a)\times (0,b)$, any `regular' function $f:\partial\Omega\to\R$ can be extended to a function of the same regularity inside the rectangle.

\begin{lemma}\label{lemma extension from boundary}
Let $\Omega = (0,a)\times (0,b)$ be a rectangle and $I_1, I_2, I_3, I_4$ the `interior' intervals of the four straight segments of $\partial\Omega$. There exists a constant $C>0$ depending on $a,b$, the degree of smoothness $k$ specified below and the precise norm selected on $C^k(I)$ for intervals such that the following is true: If $f\in C^0(\partial \Omega)$ and $f|_{I_j} \in C^k (\overline{I_j})$ for $j=1, \dots, 4$, then there exists $F\in C^k(\Omega)$ such that $F|_{\partial\Omega} = f$ and 
\[
\|F\|_{C^k(\Omega)} \leq C\,\left(\|f\|_{C^0(\partial\Omega)} + \sum_{j=1}^4 \|f\|_{C^k(I_j)}\right).
\]
 The same result holds upon replacing $C^k$ by $C^{k,\alpha}$ for $\alpha\in (0,1]$ or $W^{k,p}$ for $k\in \mathbb N_0, p\in [1,\infty]$.
\end{lemma}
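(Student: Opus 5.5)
The strategy is to build $F$ from two ingredients. First, a bilinear interpolant removes the corner values. Second, each side is lifted separately by a one-dimensional extension in the normal direction, multiplied by a cutoff that vanishes near the other sides. Continuity of $f$ at the corners is the only compatibility we can use: for $k\geq 1$ we do not assume that derivatives match at the corners. For that reason the lifting must be arranged so that each side's contribution vanishes on the adjacent sides, rather than relying on matching.

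\textbf{Step 1 (corner normalization).} Subtract the bilinear function
\[
B(x,y) = \sum_{i,j\in\{0,1\}} f(ia,jb)\,\phi_i(x)\psi_j(y),
\]
where $\phi_0=(a-x)/a$, $\phi_1=x/a$, $\psi_0=(b-y)/b$ and $\psi_1=y/b$. This is exactly the reduction already used in the proof of Theorem~\ref{theorem boundary}. Then $f-B$ vanishes at the four corners, and $\|B\|_{C^k}\lesssim\|f\|_{C^0(\partial\Omega)}$. From now on we assume $f$ vanishes at the corners.

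\textbf{Step 2 (reduce to data vanishing to order $k$ at corners).} On the bottom side $I_1=(0,a)\times\{0\}$ we write $f_1(x)=f(x,0)$. Near $x=0$ we subtract the Taylor polynomial $p_0(x)=\sum_{l=1}^{k} f_1^{(l)}(0)x^l/l!$, and near $x=a$ the analogous polynomial $p_a$. A polynomial in $x$ alone, extended constantly in $y$, does not vanish on the left side $x=0$ beyond its constant term, which is zero. So $p_0(x)$ is harmless there: it equals $p_0(0)=0$ on $\{x=0\}$.

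It does not vanish on the top side, so we multiply by $(b-y)/b$. The resulting correction $P_1(x,y)=\frac{b-y}{b}\big(\chi_0(x)p_0(x)+\chi_a(x)p_a(x)\big)$ is built with smooth cutoffs $\chi_0,\chi_a$ supported near the respective ends and equal to $1$ there. It is $C^\infty$ with norm controlled by $\|f_1\|_{C^k}$. Its trace vanishes on the top side, and it vanishes on the left and right sides because $\chi_0 p_0$ vanishes at $x=0$, with no contribution from $p_a$ there.

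We do this for all four sides and subtract the corrections. The subtraction reintroduces nonzero terms on the sides perpendicular to each corrected side only at the corner points, and there they are zero. On the parallel side the correction is killed by the linear factor. The issue is that the correction for the left side changes the data on the bottom side by $\frac{a-x}{a}(\ldots)(y{=}0)$. Since $p_0^{\mathrm{left}}(0)=0$ and the linear factor in $x$ kills it at $x=a$, this term vanishes on the bottom side as well. Hence the corrected boundary datum $\tilde f$ is zero at every corner, and the order-$k$ vanishing on each side is achieved up to this bookkeeping. Checking carefully that the bookkeeping closes is the step I expect to be the main obstacle. If it does not close cleanly, I would instead use the simpler route of multiplying each side's lift by cutoffs in the tangential variable that vanish to order $k$ at the corners, applied to $f_1-p$.

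\textbf{Step 3 (lifting).} With $g_1\in C^k[0,a]$ vanishing to order $k$ at both endpoints, we extend $g_1$ by zero to $\R$, which keeps it in $C^k$. We then set $F_1(x,y)=g_1(x)\,\eta(y)$, where $\eta\in C^\infty$, $\eta(0)=1$ and $\eta\equiv0$ for $y\geq b/2$. $F_1$ vanishes on the top side since $\eta=0$ there, and on the vertical sides since $g_1(0)=g_1(a)=0$. Summing the four lifts, the corrections $P_j$ and $B$ yields $F$, with
\[
\|F\|_{C^k(\Omega)}\leq C\Big(\|f\|_{C^0(\partial\Omega)}+\sum_j\|f\|_{C^k(I_j)}\Big),
\]
where $C$ depends only on $a,b,k$ through $\eta,\chi$ and the Taylor coefficients. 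For $C^{k,\alpha}$ the same construction works verbatim. For $W^{k,p}$, the traces at the corners are defined when $k\geq1$ by the one-dimensional embedding $W^{k,p}\subset C^{k-1}$, so we subtract Taylor polynomials only to order $k-1$. Extension by zero then stays in $W^{k,p}$, and the product with $\eta(y)$ is estimated by Fubini.
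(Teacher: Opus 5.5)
Your proof is correct, and its skeleton matches the paper's. Both subtract the bilinear interpolant of the corner values. Both then lift each side's datum by a tensor product of that datum with a function of the normal variable that equals $1$ on that side and vanishes on the opposite side.

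The difference is your Step 2, which is unnecessary. In Step 3, $F_1(x,y)=g_1(x)\eta(y)$ vanishes on $\{x=0\}$ and $\{x=a\}$ as soon as $g_1(0)=g_1(a)=0$, and Step 1 already guarantees this. The order-$k$ vanishing is used only to extend $g_1$ by zero to $\R$. But $F$ is only needed on $\overline\Omega$, where $x\in[0,a]$, so $g_1(x)\eta(y)$ is directly $C^k$ there, with $\partial_x^i\partial_y^j F_1=g_1^{(i)}\eta^{(j)}$. No derivative matching at the corners ever enters.

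The paper takes exactly this shortcut, with the linear factor $(b-x_2)/b$ in place of your cutoff $\eta$:
\[
\tilde F(x_1,x_2)=\tfrac{x_1}{a}\tilde f(a,x_2)+\tfrac{a-x_1}{a}\tilde f(0,x_2)+\tfrac{x_2}{b}\tilde f(x_1,b)+\tfrac{b-x_2}{b}\tilde f(x_1,0).
\]
Its trace is $\tilde f$ precisely because $\tilde f$ vanishes at the four corners.

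Your worry that the Step 2 bookkeeping might not close is also unfounded. Each $P_j$ has trace supported on side $j$ alone, so the corrections do not interact: $p_0(0)=0$, the cutoffs $\chi_0,\chi_a$ separate the two ends, and the linear factor kills the opposite side.

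The extra step buys only that your side lifts, after zero extension, are $C^k$ on all of $\R^2$. The lemma does not need this, and in the paper any extension beyond the rectangle is left to Stein's theorem. The cost is Taylor polynomials and cutoffs. In the $W^{k,p}$ case you also have to evaluate derivatives up to order $k-1$ at the corners. The one-line formula above avoids all of this and works verbatim for $C^k$, $C^{k,\alpha}$ and $W^{k,p}$, the last by Fubini.
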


Again, the extension result is well-known for domains with smooth boundaries, but we did not readily find a reference for rectangular domains and choose to provide a proof for the reader's convenience.

\begin{proof}
The proof resembles that of Theorem \ref{theorem boundary}. First, we consider
\[
\tilde f(x_1,x_2) = f(x_1,x_2) - f(a,b)\,\frac {x_1}a \,\frac{x_2}b -  f(a,0) \,\frac{x_1}{a}\,\frac{b-x_2}{b} -  f(0,b) \,\frac{a-x_1}{a}\,\frac{x_2}{b} - f(0,0) \,\frac{a-x_1}{a}\,\frac{b-x_2}{b}
\]
such that $\tilde f(p_j) = 0$ for the corner points $p_1,\dots, p_4$ of the rectangle and
\[
\|\tilde f\|_{C^0(\partial\Omega)} + \sum_{i=1}^4   \|\tilde f\|_{C^i(\overline{I_i})}  \leq (1+C)\, \|\tilde f\|_{C^0(\partial\Omega)} + \sum_{i=1}^4 \|\tilde f\|_{C^i(\overline{I_i})}
\]
for some constant $C>0$ which depends on the precise norm chosen on $C^k(\overline{I_i})$. Since the modifications to $\tilde f$ are affine linear on each side of the rectangle, only the zeroth and first derivatives are affected by the modification. The modification to first derivatives scales as $1/\min\{a,b\}$ in the worst case.

We now define
\[
\tilde F(x_1, x_2) = \frac{x_1}a\,\tilde f(a,x_2) + \frac{a-x_1}a\,\tilde f(0, x_2) + \frac{x_2}b \,\tilde f(x_1, b) + \frac{b-x_2}b\,\tilde f(x_1, 0)
\]
such that $\tilde F$ has the same regularity as $\tilde f|_{\overline{I_j}}$ (or equivalently, as $f|_{\overline{I_j}}$ for $j=1,\dots, 4$ and $\tilde F|_{\partial\Omega} = \tilde f$. The Lemma follows by selecting
\[
F(x_1,x_2) = \tilde F(x_1,x_2) + f(a,b)\,\frac {x_1}a \,\frac{x_2}b + f(a,0) \,\frac{x_1}{a}\,\frac{b-x_2}{b} +  f(0,b) \,\frac{a-x_1}{a}\,\frac{x_2}{b} + f(0,0) \,\frac{a-x_1}{a}\,\frac{b-x_2}{b}.
\]
\end{proof}

\section{Integrals involving Logarithms}\label{appendix integrals}

\begin{lemma}\label{lemma integral estimates}
Let $q>1$. The following estimates hold for some $q_0>0$.
\begin{align*}
\left(\int_0^1 \big|\log r\big|^q\,\dr\right)^{1/q} &\leq  q\\
\left(\int_0^1 \big|\log r\big|^q\,r\dr\right)^{1/q} &\lesssim q\\
\left(\int_0^\infty \frac{\log^q(1+z^2)}{z^2}\dz\right)^{1/q} &\lesssim q\\
\left(\int_0^\infty \frac{\log^q(1+z^2)}{z^3}\dz\right)^{1/q} &\lesssim \frac1{q-1} + q
\end{align*}
\end{lemma}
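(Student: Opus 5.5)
The plan is to reduce all four estimates to the Gamma function and then apply the crude bound $\Gamma(q+1)^{1/q}\le q$, which holds for all $q\ge1$ (e.g.\ because $\Gamma(q+1)\le q^q$ for $q\geq 1$). Stirling's formula gives the sharper asymptotics $\Gamma(q+1)^{1/q}\sim q/e$, but we do not need it.

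\textbf{First integral.} Substitute $r=e^{-t}$. This gives
\[
\int_0^1|\log r|^q\dr=\int_0^\infty t^qe^{-t}\dt=\Gamma(q+1),
\]
so the bound by $q$ is immediate.

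\textbf{Second integral.} Use the same substitution. The extra factor $r$ produces $e^{-2t}$, and
\[
\int_0^\infty t^qe^{-2t}\dt=2^{-q-1}\Gamma(q+1).
\]
Taking $q$-th roots, the bound is at most $q/2$.

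\textbf{Third integral.} Split at $z=1$.
\begin{itemize}
\item On $(0,1)$, use $\log(1+z^2)\le z^2$. The integrand is then at most $z^{2q-2}$, whose integral is $1/(2q-1)\le1$.
\item On $(1,\infty)$, use $1+z^2\le 2z^2$, so $\log(1+z^2)\le \log 2+2\log z\le 2(1+\log z)$. Substituting $z=e^t$ gives
\[
\int_0^\infty 2^q(1+t)^qe^{-t}\dt=2^qe\int_1^\infty s^qe^{-s}\ds\le 2^qe\,\Gamma(q+1).
\]
\end{itemize}
Combining the two pieces with $(A+B)^{1/q}\le A^{1/q}+B^{1/q}$ gives the bound $1+2e^{1/q}q\lesssim q$.

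\textbf{Fourth integral.} This is the only place where some care is needed, because the weight $z^{-3}$ interacts with the behaviour near $0$. On $(0,1)$, the bound $\log(1+z^2)\le z^2$ gives the integrand $z^{2q-3}$. Its integral is $1/(2q-2)$, which is exactly the source of the $(q-1)^{-1}$ blow-up as $q\to1$; in particular the integral diverges at $q=1$. Then $(2q-2)^{-1/q}\lesssim (q-1)^{-1}+1$: if $q\le2$, use $x^{1/q}\le \max\{x,1\}$, and if $q>2$ the quantity is bounded. On $(1,\infty)$, the same substitution $z=e^t$ as before yields
\[
2^q\int_0^\infty(1+t)^qe^{-2t}\dt\le 2^qe^2\,\Gamma(q+1)\,2^{-q-1},
\]
which is again $\lesssim q$ after taking $q$-th roots. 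Summing the two contributions gives $\frac1{q-1}+q$, uniformly in $q>1$.
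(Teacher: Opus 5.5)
Your proposal is correct and follows the same route as the paper: each integral is reduced to $\Gamma(q+1)$ by an exponential substitution, splitting at $z=1$, with $\log(1+z^2)\le z^2$ on $(0,1)$ (the $z^{2q-3}$ term gives the $(q-1)^{-1}$ blow-up) and $1+z^2\le 2z^2$ on $(1,\infty)$. The one difference is that you use $\Gamma(q+1)\le q^q$ for all $q\ge 1$ where the paper uses Stirling, which only gives $\Gamma(q+1)^{1/q}\le q$ for large $q$ (hence the $q_0$). Your version makes the first bound hold for every $q>1$, but it deserves a one-line justification: with $n=\lfloor q\rfloor$ one has $\Gamma(q+1)=\Gamma(q-n+1)\prod_{j=0}^{n-1}(q-j)\le q^{n}\le q^q$, since $\Gamma\le 1$ on $[1,2]$.
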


\begin{proof}
By Stirling's formula $\Gamma(q) \sim \sqrt{2\pi/q}\,(q/e)^q$, we see that $\lim_{q\to \infty} \frac1q\,\Gamma(q+1)^{1/q} = e^{-1}$, so $\Gamma(q+1)^{1/q}\leq q$ for all sufficiently large $q$.

{\bf First integral.} 
\begin{align*}
\int_0^1 |\log r|^q\dr &= \int_1^\infty \left|\log \frac1s\right|^q\,\frac1{s^2}\ds = \int_1^\infty \frac{(\log s)^q}{s^2}\ds = \int_0^\infty \frac{t^q}{e^{2t}}\,e^t\dt = \Gamma(q+1).
\end{align*}

{\bf Second integral.}
\begin{align*}
\int_0^1 \big|\log r\big|^q\,r\dr \leq \int_0^1\big|\log r\big|^q\dr = \Gamma(q+1).
\end{align*}

{\bf Third integral.}
\begin{align*}
\int_0^\infty \frac{\log^q(1+z^2)}{z^2}\dz &= \int_0^1 \frac{\log^q(1+z^2)}{z^2}\dz + \int_1^\infty \frac{\log^q(1+z^2)}{z^2}\dz\\
	&\leq \int_0^1 z^{2(q-1)}\dz + \int_1^\infty\frac{\log^q(2z^2)}{4z^3}\,4z\dz\\
	&\leq 1+ \int_1^\infty\frac{\log^q(\xi)}{\xi^{3/2}}\d\xi\\
	&\leq 1 + \int_0^\infty \frac{\log^q(e^{2t})}{e^{3t}}2e^{2t}\dt  \\
	&= 1 + 2^{q+1}\int_0^\infty t^qe^{-t}\dt\\
	&= 1 + 2^{q+1}\Gamma(q+1)
\end{align*}

{\bf Fourth integral.}
\begin{align*}
\int_0^\infty \frac{\log^q(1+z^2)}{z^3}\dz &= \int_0^1 \frac{\log^q(1+z^2)}{z^3}\dz + \int_1^\infty \frac{\log^q(1+z^2)}{z^3}\dz\\
	&\leq \int_0^1 z^{2q-3}\dz + \int_1^\infty\frac{\log^q(2z^2)}{4z^4}\,4z\dz\\
	&\leq \frac{1}{2(q-1)} + \int_1^\infty\frac{\log^q(\xi)}{\xi^2}\d\xi\\
	&\leq \frac{1}{2(q-1)} + \int_0^\infty \frac{\log^q(e^t)}{e^{2t}}e^t\dt  \\
	&= \frac{1}{2(q-1)} + \int_0^\infty t^qe^{-t}\dt\\
	&= \frac1{2(q-1)} + \Gamma(q+1)
\end{align*}
\end{proof}

\bibliographystyle{alpha}
\bibliography{../Lavrentiev/Lavrentiev_bibliography, 
	../../NN_bibliography}

\end{document}